\title{Sheaves of maximal intersection \\ and multiplicities of stable log maps}
\author[J. Choi]{Jinwon Choi}
\address{Department of Mathematics \& Research Institute of Natural Sciences, Sookmyung Women's University, Cheongpa-ro 47-gil 100, Youngsan-gu, Seoul 04310, Republic of Korea}
\email{jwchoi@sookmyung.ac.kr}
\author[M. van Garrel]{Michel van Garrel}
\address{School of Mathematics, University of Birmingham, B15 2TT, Birmingham, United Kingdom}
\email{m.vangarrel@bham.ac.uk}
\author[S. Katz]{Sheldon Katz}
\address{Department of Mathematics, MC-382, University of Illinois at Urbana-Champaign, Urbana, IL 61801, USA}
\email{katz@math.uiuc.edu}
\author[N. Takahashi]{Nobuyoshi Takahashi}
\address{Department of Mathematics, Graduate School of Science, Hiroshima University, 1-3-1 Kagamiyama, Higashi-Hiroshima, 739-8526 JAPAN}
\email{tkhsnbys@hiroshima-u.ac.jp}
\newcommand{\bA}{{\mathbb{A}}}
\newcommand{\bP}{{\mathbb{P}}}
\newcommand{\boldm}{{\boldsymbol{m}}}
\newcommand{\bv}{{\boldsymbol{v}}}
\newcommand{\bw}{{\boldsymbol{w}}}
\newcommand{\bz}{{\boldsymbol{z}}}
\newcommand{\bmu}{{\boldsymbol{\mu}}}
\newcommand{\bM}{{\boldsymbol{M}}}
\newcommand{\CC}{{\mathbb{C}}}
\newcommand{\NN}{{\mathbb{N}}}
\newcommand{\PP}{{\mathbb{P}}}
\newcommand{\ZZ}{{\mathbb{Z}}}
\newcommand{\cC}{{\mathcal{C}}}
\newcommand{\cE}{{\mathcal{E}}}
\newcommand{\cM}{{\mathcal{M}}}
\newcommand{\cU}{{\mathcal{U}}}
\newcommand{\cV}{{\mathcal{V}}}
\newcommand{\barM}{{\overline{\mathrm{M}}}}
\newcommand{\cO}{{\mathcal{O}}}
\newcommand {\Spec} {\operatorname{Spec}}
\newcommand {\shM}  {\mathcal{M}}
\newcommand {\shN}  {\mathcal{N}}
\newcommand {\shO}  {\mathcal{O}}
\newcommand {\cOX}{{\mathcal{O}_X}}
\newcommand{\tx}{{\tilde{x}}}
\newcommand{\mc}{\mathcal}
\newcommand{\sho}{|\shO_X(\beta,P)|}
\newcommand{\shoo}{|\shO_X(\beta,P)|^\circ}
\newcommand{\shooo}{|\shO_X(\beta,P)|^{\circ\circ}}
\newcommand{\nxd}{\shN_\beta(X, D)}
\newcommand{\nxdp}{\shN^P_\beta(X, D)}
\DeclareMathOperator {\hhh}{H}
\DeclareMathOperator {\HHH}{h}
\DeclareMathOperator {\Pic}{Pic}
\newtheorem{Theorem}{Theorem}[section]
\newtheorem{Lemma}[Theorem]{Lemma}
\newtheorem{Proposition}[Theorem]{Proposition}
\newtheorem{Corollary}[Theorem]{Corollary}
\newtheorem{Conjecture}[Theorem]{Conjecture}
\theoremstyle{definition}
\newtheorem{Definition}[Theorem]{Definition}
\newtheorem{Example}[Theorem]{Example}
\newtheorem{Assumptions}[Theorem]{Assumptions}
\newtheorem{Notation}[Theorem]{Notation}
\theoremstyle{remark}
\newtheorem{Remark}[Theorem]{Remark}
\numberwithin{equation}{section}
\begin{document}

\begin{abstract}
A great number of theoretical results are known about log Gromov-Witten invariants \cite{GS13, Chen14, AbramChen14}, but few calculations are worked out. In this paper we restrict to surfaces and to genus 0 stable log maps of maximal tangency. We ask how various natural components of the moduli space contribute to the log Gromov-Witten invariants. The first such calculation \cite[Proposition 6.1]{GPS10} by Gross-Pandharipande-Siebert deals with multiple covers over rigid curves in the log Calabi-Yau setting. As a natural continuation, in this paper we compute the contributions of non-rigid irreducible curves in the log Calabi-Yau setting and that of the union of two rigid curves in general position. For the former, we construct and study a moduli space of ``logarithmic'' $1$-dimensional sheaves and compare the resulting multiplicity with tropical multiplicity. For the latter, we explicitly describe the components of the moduli space and work out the logarithmic deformation theory in full, which we then compare with the deformation theory of the analogous relative stable maps.
\end{abstract}

\keywords{Log Gromov-Witten theory, moduli spaces of sheaves, log Calabi-Yau surfaces}

\subject{primary}{msc2020}{14N35, 14A21, 14B10, 14D15, 14D20, 14J26}
\subject{secondary}{msc2020}{}

\maketitle
\setcounter{tocdepth}{2}
\tableofcontents

\section{Introduction}

Let $X$ be a smooth surface, let $D$ be an effective divisor on $X$ and denote by $D_\mathrm{sm}$ the smooth part of $D$.
An \emph{$\bA^1$-curve} on $(X, D)$ is a proper irreducible curve $C$ on $X$ 
such that the normalization of $C\setminus D$ is isomorphic to $\bA^1$. 
We calculate the contributions of the following curves to the genus $0$ 
log Gromov-Witten invariants of maximal tangency: 
\begin{enumerate}
\item[(A)] Corollary \ref{cor:ecompjac}:
Under the assumption that $X$ is a projective rational surface, $K_X+D\sim 0$ and 
$P\in D_\mathrm{sm}$, 
an $\bA^1$-curve $C$ which is smooth at $P=C\cap D$.
\item[(B)] Theorem \ref{theorem_main}:
The sum of two immersed $\bA^1$-curves $Z_1, Z_2$ 
with $(K_X+D).Z_i=0$, 
intersecting at $P\in D_\mathrm{sm}$ in a general way. 
Note that $Z_1$ and $Z_2$ are rigid as $\bA^1$-curves, see the proof of \cite[Proposition 4.21(2)]{CGKT2}. 
\end{enumerate}
In the proof of (A), we study a moduli space of ``logarithmic'' $1$-dimensional sheaves, 
which shows an intriguing analogy with the case of K3 surfaces as described in Section \ref{sec:K3}. 
For (B), we give a rather concrete description of a space of stable log maps and its deformation theory. Section \ref{sec:ex} illustrates (A) and (B) and describes some future directions. Section \ref{sec:exx} describes (B) in detail through an example and compares it with the case of relative stable maps.

There are two natural and well-studied geometries to which our results apply:
\begin{enumerate}
    \item The setting of the tropical vertex \cite{GPS10} consisting of appropriate blow ups of toric surfaces at smooth points of the toric boundary, as summarized in Section \ref{ex:GPS}. 
    \item Log K3 surfaces $(X,D)$ for $X$ a del Pezzo surface and $D\in|-K_X|$ smooth. 
    Despite recent breakthroughs \cite{CGKT1,CGKT2,Bou19a,Bou19b,Gra20,BN20}, many aspects of their enumerative geometry remain mysterious. Section \ref{subsection:applications} describes one such open problem.
\end{enumerate}
For both (1) and (2), Corollary \ref{cor:ecompjac} and Theorem \ref{theorem_main} calculate the contributions to the invariants of typical zero-dimensional components of the moduli spaces.

Section \ref{sec:ex:sec} contains a fully worked out example that illustrates (A) and (B) and includes some new computations.

\subsection{Idealized geometries}

Gromov-Witten invariants were devised as a virtual count of curves in projective or compact symplectic manifolds.  However, 
their relationship with actual counts of curves, even when understood, is often quite subtle. 

Let $Y$ be a Calabi-Yau (CY) threefold and consider its genus 0 Gromov-Witten (GW) invariants $N_\beta(Y)$ for $\beta\in\hhh_2(Y,\ZZ)$. They are rational numbers in general, and BPS numbers $n_\beta(Y)$ were proposed as underlying $\ZZ$-valued invariants. They were originally defined via the recursive relationship
\begin{equation}\label{cy3-bps}
N_\beta(Y) = \sum_{k|\beta} \; \frac{1}{k^3} \; n_{\beta/k}(Y).
\end{equation}
Still, even in the case of a compact $Y$, typically the BPS numbers $n_\beta(Y)$ are enumerative only in low degrees, in the sense that they agree with the count of rational curves. For example, if $Y$ is a general quintic threefold, then $n_d(Y)$ equals the number of rational curves of degree $d$ only when $d\leq 9$. For larger degrees, the story is more subtle.

Equation \eqref{cy3-bps} is derived by postulating that $Y$ symplectically deforms to an \emph{idealized geometry} $\widetilde{Y}$  where all rational curves are infinitesimally rigid, i.e.\ have normal bundle $\shO(-1)\oplus\shO(-1)$. If such a curve is in class $\beta$, then its contribution to $N_{l\beta}(\widetilde{Y})$ for $l\in\NN$ is given by $1/l^3$ \cite{AM93,Kon95,Ma95,Voi96}, leading to the above formula. By deformation invariance one would then conclude that $n_\beta(Y)$ is a count of rational curves in $\widetilde{Y}$. While the existence of such an idealized geometry is unknown, it is remarkable that the so defined $n_\beta(Y)$ are integers \cite{IP18}.

Assume instead that $Y$ is the local Calabi-Yau threefold given as the total space $\mathrm{Tot} \, \shO_X(K_X)$ of the canonical bundle over a del Pezzo surface $X$. Then the enumerative interpretation of the local BPS numbers $n_\beta(K_X):=n_\beta(Y)$ is even more mysterious. Not only is their relationship to counts of rational curves in $Y$ previously not known, they also are alternating negative with interesting divisibility properties \cite[Conjecture 1.2]{CGKT1}. As an illustration, the BPS numbers for local $\PP^2$ in degrees $d$ up to 6 are $3, \, -6, \, 27, \, -192, \, 1695, \, -17064$, all of which are divisible by $3d$, a conjecture which was proven in \cite{Bou19b} based on \cite{Gra20,Bou19a}.

An interpretation of $n_\beta(K_X)$, which also makes it clear why they are integral, was given using moduli spaces of sheaves. 
Denote by $M_{\beta, 1}(X)$ the (smooth) moduli space of $(-K_X)$-stable $1$-dimensional sheaves of class $\beta$ and of holomorphic Euler characteristic $1$ on $X$, and let $w:=-K_X\cdot\beta$. By \cite{Katz08,Bri06,Toda09,Toda10,Toda12}, the genus 0 local BPS invariants can be identified with the topological Euler characteristics of $M_{\beta,1}(X)$:
\[
n_\beta (K_X) = (-1)^{w-1}e(M_{\beta,1}(X)).
\]

Another interpretation comes from log geometry. 
Based on the predictions of \cite{Tak01},  in \cite{CGKT1,CGKT2} we started a program to show that $(-1)^{w-1}n_\beta(K_X)/w$ is a count of log curves in the surface $X$, namely that it equals the log BPS invariants of Definition \ref{defn:logbps}. Let $D$ be a smooth anticanonical curve on $X$. 
Denote by $\overline{\text{M}}_\beta(X,D)$  the moduli space of genus 0 basic stable log maps \cite{GS13, Chen14, AbramChen14} in $X$ of class $\beta$ and of maximal tangency with $D$, see Section \ref{sec:loggw}. 
From this space one defines the log Gromov-Witten invariants $\mathcal{N}_\beta(X,D)$, which virtually count $\bA^1$-curves, 
i.e. curves $C$ such that the normalizations of $C\setminus D$ are isomorphic to $\bA^1$.

While there are a great number of theoretical results about log Gromov-Witten invariants, there are very few worked out examples. One of the aims of this paper is to remedy to that shortcoming. Our two main results will apply to a broad range of computations. One such application is \cite{BN20}.

The stable log maps can meet $D$ in a finite number of points and for such a point $P\in D$, one can consider $\mathcal{N}^P_\beta(X,D)$, the log GW invariant at $P$. We say the triple $(X,D,P)$ is an \emph{idealized log CY geometry for $\beta$} if $\mathcal{N}^P_\beta(X,D)$ equals the number of $\bA^1$-curves of class $\beta$ at $P$. The advantage of the log setting is that for generic $P$ and general $D$, the expectation is that $(X,D,P)$ is idealized. The disadvantage is that there always are points $P$ where $(X,D,P)$ is not idealized, so looking at idealized geometries only captures a part of the moduli space of stable log maps.

At this point, one may define the log BPS numbers as the number of $\bA^1$-curves in an idealized log CY geometry, which conjecturally is equivalent to Definition \ref{defn:logbps}. The next step then is to understand how $\bA^1$-curves contribute to $\mathcal{N}_\beta(X,D)$ in non-idealized log geometries. Unlike the CY case, there are countably many ways in which $\bA^1$-curves contribute to (higher degree) log GW invariants. The case of multiple covers over rigid integral curves was treated in \cite[Proposition 6.1]{GPS10}. In this paper we treat the next two cases: of non-rigid $\bA^1$-curves and of two rigid distinct $\bA^1$-curves glued together. We expect that combining these 3 cases will lead to a solution of the general case.

In non-idealized geometries, passing from $\bA^1$-curves to virtual counts is related to surprisingly interesting geometry. For example, in \cite[Proposition 1.16]{CGKT2} we proved that the contribution of multiple covers over rigid $\bA^1$-curves to the log BPS numbers is given as the Donaldson-Thomas invariants of loop quivers. In the first part of this paper, we introduce a certain moduli space $\mathcal{MMI}_\beta$ of sheaves of maximal intersection,  which can be regarded as a logarithmic analogue of $M_{\beta,1}(S)$. Using $\mathcal{MMI}_\beta$, we show that $\bA^1$-curves in $(X,D)$ share the same properties as rational curves in K3 surfaces.

Let us start with a maximally tangent stable log map of the simplest type, namely $f : \mathbb{P}^1 \to C \subset X$ with $f$ immersed and $C$ an integral rational curve maximally tangent to $D$. Such a curve contributes 1 to $\mathcal{N}_\beta(X,D)$. In other words, the naive multiplicity of the $\bA^1$-curve $C\setminus D$ is the correct multiplicity. 
Let us consider possible degenerations of $f$. 
For example, by deforming $D$, two log maps with image curve nodal cubics might collapse to one log map with image curve a cuspidal cubic.
Then the $\bA^1$-curve $C\setminus D$ is not immersed and it contributes 2 to $\mathcal{N}_\beta(X,D)$. More generally, we show (Corollary \ref{cor:ecompjac}) that an integral rational curve $C$ maximally tangent to $D$ and smooth at $D$ contributes its natural stable map multiplicity to $\mathcal{N}_\beta(X,D)$, i.e.\ the log structure introduces no new infinitesimal deformations.

In the second part of this paper, we give an in-depth description of the log deformation theory of stable log maps obtained by gluing two $\bA^1$-curves. 
If $C_1$ and $C_2$ are distinct immersed integral rational curves maximally tangent to $D$ at the same point, smooth at that point and intersect there in a general way, then they will contribute to $\mathcal{N}_{[C_1]+[C_2]}(X,D)$, a contribution we calculate in Theorem \ref{theorem_main}.
We compare it to the case of relative stable maps and find that the log structures more finely distinguish between the possible maps.

\subsection{Overview of methods}

The moduli space $\overline{\text{M}}_\beta(X,D)$ admits a finite forgetful morphism to the moduli space of stable maps \cite{Wise19}, and it is natural to ask about the interplay between infinitesimal deformations of the underlying stable maps and infinitesimal deformations of the log structures. While this is difficult to answer in general, we get explicit solutions in terms of topological data for certain components of dimension $0$. 
In this paper, we compute the contributions of such $0$-dimensional components of $\overline{\text{M}}_\beta(X,D)$ to the associated log Gromov-Witten invariants and log BPS numbers.

The $0$-dimensional components of $\overline{\text{M}}_\beta(X,D)$ we consider here fall into two categories. The simplest components are built from $\bA^1$-curves $C$ of class $\beta$. 
In the first part of this paper, 
we deal with such curves. 

The arguments are modelled on the case of K3 surfaces. 
Let $S$ be a K3 surface, $\gamma$ a curve class on $S$ 
and $C$ a rational curve of class $\gamma$ on $S$. 
Then the multiplicity of $C$ is $e(\overline{\Pic}^0(C))$, the Euler characteristic of the compactified Jacobian $\overline{\Pic}^0(C)$ of $C$, 
by \cite{FGS99} (see also \cite{Beau99}). 
Let us elaborate a little. 
For a rational curve $C$ 
we consider the moduli space $M_{0, 0}(C, [C])$ of genus $0$ stable maps to $C$, 
which is a thickened point corresponding to the normalization map $n: \mathbb{P}^1 \to C$. 
Let $l(C):=l(M_{0, 0}(C, [C]))$ be its length. 
Since $C$ has only planar singularities, 
it follows that $l(C)$ is equal to $m(C)$, the degree of the genus $0$ locus 
in the versal deformation space of $C$ (\cite[Theorem 1]{FGS99}), 
which in turn is equal to $e(\overline{\Pic}^0(C))$ 
by \cite[Theorem 2]{FGS99}. 

As a curve on a surface $S$, 
the natural multiplicity would be $l(M_{0, 0}(S, \gamma), n)$, 
the length of the moduli space of genus $0$ stable maps to $S$ at $n$, 
and it is equal to $l(C)$ if $S$ is a K3 surface 
(\cite[Theorem 2]{FGS99}). 
The key fact used in the proof is the smoothness 
of the relative compactified Jacobian over the complete linear system (at the points over $C$), 
or equivalently, 
of the moduli space $M_{\gamma,1}(S)$ of stable $1$-dimensional sheaves of class $\gamma$, proven in \cite{Muk84}.

\begin{Remark}
Let $C$ be a projective rational curve with planar singularities 
and $\check{\pi}: \check{C}\to C$ 
its minimal unibranch partial normalization(\cite[3.2]{Beau99}), 
i.e.~the partial normalization with $\check{C}$ unibranch 
such that any unibranch partial normalization factors through $\check{\pi}$. 
Then $m(C)=m(\check{C})$ holds 
(\cite[Proposition 3.3]{Beau99}, 
\cite[\S1]{FGS99}). 
In particular, 
if $C$ is immersed, i.e.~the differential of $n$ is nowhere vanishing, then there are no infinitesimal deformations and $m(C)=1$. 
In general, $m(C)$ is a product over the singularities of $C$ of factors depending on the analytic type 
(\cite[\S1]{FGS99}, \cite[Proposition 3.8]{Beau99}, \cite{She12}). 
See \cite[\S4]{Beau99} for explicit calculations.

\end{Remark}

Now we return to the case of an $\bA^1$-curve $C$ in $(X, D)$, 
with $K_X+D\sim 0$, $P=C\cap D\in D_\mathrm{sm}$ and $P\in C_\mathrm{sm}$. 
Denote by $n : \PP^1 \to C$ the normalization map. 
Then $n$ gives an isolated point in the moduli space of log stable maps. 
One of our main results, Corollary \ref{cor:ecompjac}, 
states that $n$ contributes $l(C)$ 
to the log Gromov-Witten invariant. This follows from two facts: (i) the infinitesimal deformations of $n$ as a log map can be identified with the infinitesimal deformations of the underlying stable map preserving the maximal tangency condition, and
(ii) such infinitesimal deformations of $n$ factor  scheme-theoretically through $C$.

For the proof of (ii), we introduce a certain moduli space $\mathcal{MMI}_\beta$ of sheaves of maximal intersection, 
which can be regarded as a logarithmic analogue of $M_{\gamma,1}(S)$. Just as in the case of K3 surfaces, 
we show the smoothness of $\mathcal{MMI}_\beta$ (Theorem \ref{thm:nonsing}), 
from which we deduce that 
infinitesimal deformations of $n$ as a log map  
factor through $C$. 
This might give a glimpse into a logarithmic version of sheaf-theoretic methods in curve counting \cite{LW15,MR20}, in analogy to the interpretation of genus 0 BPS numbers as Donaldson-Thomas invariants. 
Also, the relation between $M_{\beta,1}(X)$ and $\mathcal{MMI}_\beta$ will be the subject of further investigation.

Before explaining how these results relate to the local BPS numbers, let us introduce a little more general setting. 
Let $X$ be a smooth projective surface. In the first part of this paper, we will often require $X$ to be \emph{regular}, by which we mean that its irregularity $\HHH^1(\shO_X)$ vanishes. We denote by $D$ an effective divisor on $X$. 
We will sometimes require additional conditions on $X$ and $D$.

\begin{Definition}\label{defn:star}
Assume $X$ is regular, let $P\in D$ and let $\beta\in\hhh_2(X,\ZZ)$ be a curve class. Consider the linear system $\sho$ of curves of class $\beta$ that meet $D$ maximally at $P$ (see Definition \ref{defn:sho}) and let $L\subseteq \sho$.
We consider the following condition on $L$: 
\begin{center}
\emph{Condition $(\bullet)$} \quad
Every rational curve $C\in L$ that is unibranch at $P$ is in fact smooth at $P$.
\end{center}
\end{Definition}

In the setting of Theorem \ref{thm:bps} below, we expect Condition $(\bullet)$ to hold for general choices of $(X, D)$.

From Corollary \ref{cor} 
we derive an enumerative meaning of the local BPS numbers $n_\beta(K_X)$ subject to Conjecture \ref{conj:logbps} (\cite[Conjecture 1.3]{CGKT2}). 
This is a BPS version of the log-local principle pursued in \cite{vGWZ13,vGGR, BBvG1, BBvG2, BBvG3, NR}. 
Notice that Conjecture \ref{conj:logbps} is proven for $X=\PP^2$ in \cite{Bou19b} based on \cite{Gra20,Bou19a}, and for any del Pezzo surface and classes of arithmetic genus $\leq 2$ in \cite{CGKT1,CGKT2}.

\begin{Theorem}[Log-local principle for BPS numbers]\label{thm:bps}

Let $X$ be a del Pezzo surface, let $D$ be a smooth anticanonical curve on $X$ and let $P\in D$ be $\beta$-primitive (Definition \ref{defn:prim}). Then there is a finite number of rational curves in $\sho$. Assume that:
\begin{itemize}
\item Conjecture \ref{conj:logbps} holds for $X$. (For example when $X=\PP^2$ by \cite{Bou19a,Bou19b}, or for arithmetic genus $\leq2$ by \cite{CGKT1,CGKT2}).
\item Condition $(\bullet)$ holds for $\sho$. 
\end{itemize}
Then
\begin{equation} \label{eqn:loglocal}
n_\beta (K_X) = (-1)^{\beta\cdot D-1} \, (\beta\cdot D) \, \sum_{\substack{C\in\sho \\ \text{rational and} \\ \text{unibranch at } P}} l(C). 
\end{equation}
Note that $l(C)=e(\overline{\Pic}^0(C))$ by \cite{FGS99}.
\end{Theorem}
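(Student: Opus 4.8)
The plan is to deduce \eqref{eqn:loglocal} by assembling three inputs: a dimension count giving the finiteness statement, the per-curve contribution computed in Corollary \ref{cor:ecompjac}, and the log-local BPS principle of Conjecture \ref{conj:logbps}. Throughout I would write $w := \beta\cdot D = -K_X\cdot\beta$, so that the sheaf-theoretic identity $n_\beta(K_X)=(-1)^{w-1}e(M_{\beta,1}(X))$ from the introduction is available for orientation. The first goal is to prove the intermediate identity
\[
\nxdp = \sum_{\substack{C\in\sho \\ \text{rational, unibranch at } P}} l(C),
\]
and then to feed it into Conjecture \ref{conj:logbps}, which supplies the sign $(-1)^{w-1}$ and the factor $w$ relating the log BPS number at a primitive point to $n_\beta(K_X)$.

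For finiteness I would argue by a dimension count on $\sho$. On the del Pezzo surface $X$ one has $\dim|\beta| = p_a(\beta)+w-1$, and imposing maximal tangency at the fixed point $P$, i.e.\ forcing the degree-$w$ divisor $C\cap D$ on $D$ to equal $wP$ inside $|\shO_D(\beta)|$, is $w-1$ conditions; hence $\dim\sho = p_a(\beta)$. Since the members of $\sho$ have arithmetic genus $p_a(\beta)$, requiring a member to be rational is a further $p_a(\beta)$ conditions, cutting the rational locus down to dimension $0$. Thus there are finitely many rational curves in $\sho$, where $\beta$-primitivity of $P$ is what guarantees that no positive-dimensional families survive (for instance those produced by proper subclasses).

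For the main identity I would proceed curve by curve. A rational curve $C\in\sho$ that is unibranch at $P$ is integral and meets $D$ only at $P$ with the normalization of $C\setminus D$ isomorphic to $\bA^1$; it is therefore precisely an $\bA^1$-curve, and conversely every class-$\beta$ $\bA^1$-curve at $P$ arises this way. Condition $(\bullet)$ upgrades \emph{unibranch at} $P$ to \emph{smooth at} $P$, so each such $C$ satisfies the hypotheses of Corollary \ref{cor:ecompjac}, which shows that $C$ is an isolated point of $\overline{\text{M}}_\beta(X,D)$ contributing exactly $l(C)=e(\overline{\Pic}^0(C))$. It then remains to see that these are the \emph{only} contributions to $\nxdp$: since $P$ is $\beta$-primitive, no multiple cover of an $\bA^1$-curve of a proper subclass can be maximally tangent at $P$, which excludes the \cite{GPS10}-type multiple-cover contributions and any stable log map factoring through a lower-degree curve. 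Summing over the finitely many $C$ then yields the displayed identity.

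The main obstacle is this last exhaustiveness step: showing that the stable log maps contributing to $\nxdp$ are exhausted by the $\bA^1$-curves of Corollary \ref{cor:ecompjac}, with no stray contributions from non-reduced, reducible, or multiply-covered configurations meeting $D$ at $P$. Here I would lean on $\beta$-primitivity of $P$ to exclude all sub-maximal tangency profiles, and on Condition $(\bullet)$ to rule out the genuinely singular unibranch behaviour at $P$ that would otherwise fall outside the scope of Corollary \ref{cor:ecompjac}. Once the identity for $\nxdp$ is in hand, the theorem follows formally: at the $\beta$-primitive point $P$ the log Gromov--Witten invariant agrees with the log BPS number, so Conjecture \ref{conj:logbps} replaces $\nxdp$ with $(-1)^{w-1}n_\beta(K_X)/w$, and rearranging gives \eqref{eqn:loglocal}.
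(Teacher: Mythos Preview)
Your proposal is correct and follows essentially the same route as the paper. The paper deduces the theorem by combining Corollary~\ref{cor} (which gives $m^P_\beta=\sum l(C)$, itself obtained from Corollary~\ref{cor:ecompjac} together with the observation that for $\beta$-primitive $P$ every component of $\overline{\text{M}}^P_\beta(X,D)$ comes from an irreducible $\bA^1$-curve) with Conjecture~\ref{conj:logbps}; you run the same argument, only phrasing the intermediate step in terms of $\nxdp$ rather than $m^P_\beta$ and noting these coincide at a $\beta$-primitive point.
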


Note that each component of $\overline{\text{M}}_\beta(X,D)$ 
with tangency at a $\beta$-primitive $P$  comes from an (irreducible) $\bA^1$-curve, 
explaining the terms $l(C)$ in Equation \eqref{eqn:loglocal}.

The other category of zero-dimensional components consists of stable log maps $C\to X$ with image consisting of two distinct rational curves $Z_1$ and $Z_2$, each maximally tangent to $D$ at $P$. 
This situation occurs very often for a non-$\beta$-primitive point $P$. 
For example, if there are two $\bA^1$-curves $Z_1$ and $Z_2$ in the class $\beta$ 
which meet $D$ at the same point $P$, 
then their sum contributes to $\overline{\text{M}}_{2\beta}(X,D)$. 

In this case, $C$ consists of three components, two mapped to $Z_1$ and $Z_2$ and one mapped to $P$. 
It is not straightforward to see what log structure $C$ should have, 
unlike the case treated in the first part. 
Moreover, it turns out that the moduli spaces of stable log maps and relative stable maps are not isomorphic 
in the neighborhood of such a map, 
although they are guaranteed to give the same numerical invariants by \cite{AMW}. 

As the second main result of this paper, we calculate in Theorem \ref{theorem_main} the number of such log maps and the contributions of each to the log Gromov-Witten invariants in terms of intersection data, subject to genericity conditions. The proof involves a rather concrete (and long) calculation on infinitesimal families of log maps. We explicitly separate the infinitesimal deformations coming from the underlying stable maps from the ones coming from the log structure and explicitly describe both. This result sheds light on the interplay of the log structures with the underlying stable maps.

Now let us give a little more detailed explanation 
on what we are going to deal with.

\subsection{Log BPS numbers}\label{sec:logbps}

Let $X$ be a smooth projective surface and 
let $\beta\in\hhh_2(X,\ZZ)$ be a curve class. 
We write $w=\beta\cdot D$ and assume that $w>0$. If $X$ is regular, then there is a unique $L\in\Pic(X)$ such that $c_1(L)$ is Poincar\'e dual to $\beta$. By $\beta|_D$ 
we mean $L|_D\in\Pic^w(D)$. For $X$ regular, set
\begin{equation}\label{eqn:dbeta}
D(\beta):=\{P\in D_{\mathrm{sm}} \, : \, \beta|_D = \shO_D(wP) \text{ in } \Pic^w(D)\}.
\end{equation}

\begin{Remark}
(1)
If $D$ is an elliptic curve, 
$D(\beta)$ is a torsor for $\Pic^0(D)[w]\simeq \ZZ/w\ZZ \times \ZZ/w\ZZ$ 
(cf.\ \cite[Lemma 2.14]{CGKT2}). 

(2)
If $D=\bigcup_{i=1}^k D_i$ is the decomposition into irreducible components 
and $D(\beta)\not=\emptyset$, 
then $D(\beta)\subset D_{i_0}$ for a unique $i_0$ 
and $\beta|_{D_i}\sim 0$ for $i\not=i_0$. 
\end{Remark}

Part (2) is true because if $P\in D(\beta)$ and $P\in D_{i_0}$, then from $\beta|_{D}=\shO_{D}(wP)$ we see that $\beta|_{D_i}=\shO_{D_i}$ for $i\ne i_0$. In particular, $\beta|_{D_i}$ cannot be of the form $\shO_{D_i}(wP)$ for any $P\in D_i$ for degree reasons, and so $D(\beta)\subset D_{i_0}$.

\begin{Definition} \label{defn:prim}
Let $P\in D(\beta)$. Then $P$ is \emph{$\beta$-primitive} if there is no decomposition into non-zero pseudo-effective classes $\beta=\beta'+\beta''$, with $\beta'\cdot D>0$ and such that $P\in D(\beta')$.
\end{Definition}

\begin{Proposition}[Proposition 4.11 in \cite{CGKT2}] 
Assume that $X$ is a del Pezzo surface and that $D$ is a smooth anticanonical curve on $X$. 
If the pair $(X, D)$ is general, then there is a $\beta$-primitive point $P\in D(\beta)$.
\end{Proposition}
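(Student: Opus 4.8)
The plan is to combine the torsor description of $D(\beta)$ from the Remark above with the group law on $D$, which is an elliptic curve: by adjunction $K_D=(K_X+D)|_D=\shO_D$, so $D$ has genus $1$. I would fix the identity $O\in D$ and the ``sum'' homomorphism $\sigma\colon\Pic(D)\to D$ sending $\shO_D(\sum n_iQ_i)$ to $\sum n_iQ_i$ under the group law, whose restriction to $\Pic^0(D)$ is an isomorphism. In this language $P\in D(\beta)$ is equivalent to $w\cdot P=\sigma(\beta|_D)$, so $D(\beta)$ is a coset of $D[w]\cong(\ZZ/w)^2$; in particular it is nonempty with exactly $w^2$ points. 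It then suffices to show that the non-primitive points do not exhaust $D(\beta)$.

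The first reduction I would make is finiteness of the relevant decompositions $\beta=\beta'+\beta''$: since $-K_X$ is ample, the pseudo-effective cone of $X$ is strongly convex and rational polyhedral, so $\{\beta'\text{ pseudo-effective}:\beta-\beta'\text{ pseudo-effective}\}$ is a bounded polytope and contains only finitely many lattice classes. For each decomposition I set $\gamma:=w'\beta-w\beta'$, which satisfies $\gamma\cdot D=0$. Applying $\sigma$ to the defining equations of a point $P\in D(\beta)\cap D(\beta')$ gives $\sigma(\gamma|_D)=w'\sigma(\beta|_D)-w\sigma(\beta'|_D)=w'(wP)-w(w'P)=O$, so that $D(\beta)\cap D(\beta')\ne\emptyset$ forces $\gamma|_D=\shO_D$ in $\Pic^0(D)$. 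This converts primitivity into a statement about the restriction homomorphism $\Pic(X)_0:=\{\gamma:\gamma\cdot D=0\}\to\Pic^0(D)$.

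The decompositions now split into two types. If $\beta'$ is not proportional to $\beta$ then $\gamma\ne0$, and I would realize $X$ as $\mathrm{Bl}_{p_1,\dots,p_k}\PP^2$ with $D$ the proper transform of a plane cubic through the $p_i$ (the cases $X=\PP^2$, where $\Pic(X)_0=0$, and $\PP^1\times\PP^1$ are checked directly). Writing $\gamma=aH+\sum b_iE_i$, one computes $\sigma(\gamma|_D)=a\,h+\sum b_ip_i$, where $h=\sigma(H|_D)$ is fixed and the $p_i\in D$ are the blow-up points. A nonzero class in $\Pic(X)_0$ must have some $b_i\ne0$ (otherwise $\gamma=aH$ with $3a=\gamma\cdot D=0$, forcing $\gamma=0$), so this is a non-constant morphism in the $p_i$ and $\{\,\gamma|_D=\shO_D\,\}$ is a proper closed condition. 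As there are only finitely many such $\gamma$, a general pair $(X,D)$ avoids all of them, and these decompositions never witness non-primitivity.

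The genuine obstacle is the proportional case, where $\gamma=0$ automatically and genericity gives nothing. Writing $\beta=m\alpha$ with $\alpha$ primitive, $a=\alpha\cdot D$ and $a_0=\sigma(\alpha|_D)$, the condition $P\in D(j\alpha)$ reads $aP-a_0\in D[j]$, so $D(\beta)\cap D(j\alpha)=\{P:aP-a_0\in D[\gcd(j,m)]\}$. The map $D(\beta)\to D[m]$, $P\mapsto aP-a_0$, is surjective, and I would single out the points whose image has exact order $m$ in $D[m]\cong(\ZZ/m)^2$: such elements exist, and they lie in no $D[e]$ with $e\mid m$, $e<m$, hence in no $D(j\alpha)$ for $1\le j<m$ (so they avoid every proportional decomposition, whether or not $j\alpha$ is pseudo-effective). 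Together with the previous paragraph, these points are $\beta$-primitive for general $(X,D)$, which yields the existence statement. I expect the only delicate point to be precisely this proportional bookkeeping — matching the $\gcd$-torsion loci with the genuinely primitive torsion elements — since the non-proportional decompositions are dispatched cleanly by the genericity of $(X,D)$.
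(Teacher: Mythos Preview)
The paper does not give a proof of this proposition; it is stated with a citation to \cite[Proposition 4.11]{CGKT2} and used as input. So there is no ``paper's own proof'' to compare against here.

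That said, your argument is sound and is essentially the natural one. A couple of minor points you should tighten: (i) when you vary the blow-up points $p_i$ to get the genericity statement for the non-proportional $\gamma$'s, you are implicitly varying $X$ within the family of del~Pezzo surfaces of fixed degree, so you should say explicitly that the finite set of relevant classes $\gamma$ lives in the fixed Picard lattice and does not change as the $p_i$ move (true because the effective cone of a del~Pezzo is determined by the lattice once the points are in general position); (ii) your treatment of $\PP^1\times\PP^1$ is just a placeholder --- the check is that for a general smooth $(2,2)$-curve $D$ the restriction $\mathcal{O}(1,-1)|_D$ is nontrivial in $\Pic^0(D)$, which amounts to the two $g^1_2$'s on $D$ being distinct, an open condition. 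With those clarifications your proof is complete. Your own caveat about the ``proportional bookkeeping'' is well placed: that step is where a careless argument would go wrong, and your use of the surjection $D(\beta)\to D[m]$ together with the existence of elements of exact order $m$ in $(\ZZ/m)^2$ handles it correctly.
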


If $(X, D)$ is a log smooth pair, 
denote by $\nxd$ the genus 0 log Gromov-Witten invariant of maximal tangency and class $\beta$ of $(X, D)$, whose definition we review in Section \ref{sec:loggw}. 
If $X$ is regular, $D$ is smooth and $p_a(D)>0$, 
then $D(\beta)$ is a finite set 
and the moduli space decomposes into a disjoint union according to $P\in D(\beta)$:
\[
\overline{\text{M}}_\beta(X,D) = \bigsqcup_{P\in D(\beta)} \overline{\text{M}}^P_\beta(X,D).
\]
Hence we can define the contribution $\nxdp$ from each $P$ so that 
\[
\nxd = \sum_{P\in D(\beta)} \nxdp 
\]
holds.

We often take $D$ to be anticanonical. 
Note that a regular surface with a nonzero anticanonical curve is rational 
by Castelnuovo's criterion. 

\begin{Definition}\label{defn:logbps}
Assume that $X$ is rational and $D$ is smooth anticanonical, and let $P\in D(\beta)$. The \emph{log BPS number at $P$}, $m^P_\beta$, is defined recursively via 
\[
\nxdp =  \sum_{k|\beta}     
\frac{(-1)^{(k-1)\,w/k}}{k^2} \, m^P_{\beta/k},
\]
where $m^P_{\beta'}:=0$ if $P\not\in D(\beta')$. 
\end{Definition}

\begin{Conjecture}[Conjecture 1.3 in \cite{CGKT2}] \label{conj:logbps}
For all $P,P'\in D(\beta)$,
\[
m_\beta^P = m_\beta^{P'}.
\]
Equivalently,
for $P\in D(\beta)$,
\[
n_\beta(K_X) = (-1)^{\beta\cdot D-1} \, (\beta\cdot D) \, m_\beta^P.
\]

\end{Conjecture}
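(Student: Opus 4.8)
The statement is a conjecture, so rather than a full proof I outline how I would establish the equivalence of its two formulations and then indicate where the genuinely open content lies. Throughout write $w=\beta\cdot D$ and recall from Definition \ref{defn:logbps} that the per-point numbers are defined by the recursion
\[
\nxdp=\sum_{k\mid\beta}\frac{(-1)^{(k-1)w/k}}{k^2}\,m^P_{\beta/k},\qquad m^P_{\beta'}:=0\text{ for }P\notin D(\beta'),
\]
which is invertible over the divisibility lattice by M\"obius inversion, since the $k=1$ coefficient is a unit. Hence $m^P_\beta$ is, for each fixed $P$, a universal $\ZZ$-linear combination of the $\shN^P_{\beta/k}(X,D)$, and the first thing I would record is that the whole problem reduces by induction on divisibility to the case of $\beta$ primitive, where the recursion degenerates to $m^P_\beta=\nxdp$.

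The direction (B)$\Rightarrow$(A) is immediate: the right-hand side $(-1)^{w-1}w\,m^P_\beta$ is forced to equal the single number $n_\beta(K_X)$, which does not depend on $P$, so all the $m^P_\beta$ coincide. For (A)$\Rightarrow$(B) the plan is to feed the recursion into the log-local principle \cite{vGGR}, which expresses the local invariant $n_\beta(K_X)$ through the genus $0$ maximal-tangency log invariants of $(X,D)$ and thereby yields a $P$-independent expression for $n_\beta(K_X)$ in terms of the collection $\{m^{P}_{\beta/k}\}_{P\in D(\beta),\,k\mid\beta}$. Granting independence (A) for all classes dividing $\beta$, each $m^P_{\beta/k}$ may be replaced by the common value $m_{\beta/k}$; here one uses that the sign exponent $(k-1)w/k$ of the log recursion combines with the exponent of the local Gopakumar--Vafa recursion via $(k-1)w/k+(w/k-1)=w-1$ to collapse to the single global sign $(-1)^{w-1}$, and matching the two towers divisor-by-divisor (again by M\"obius inversion) yields the closed formula (B). The one arithmetic point that must be handled with care is the cardinality of $D(\beta)$ — a torsor under $\Pic^0(D)[w]$ — which enters when passing between the per-point numbers $m^P_\beta$ and their sum over $D(\beta)$; keeping track of this factor is what fixes the precise constant $\beta\cdot D$ and the sign in (B).

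I expect the genuinely hard part to be not this formal equivalence but the independence assertion (A) itself. Since there is in general no automorphism of $(X,D)$ permuting the points of $D(\beta)$, nothing a priori forces the per-point counts at different $P$ to agree, which is exactly why the statement remains conjectural. The route the present paper opens is to make each $m^P_\beta$ geometric: for primitive $\beta$ one has $m^P_\beta=\nxdp$, and via the smoothness of the moduli space $\mathcal{MMI}_\beta$ of sheaves of maximal intersection (Theorem \ref{thm:nonsing}) Corollary \ref{cor:ecompjac} identifies the contribution of each $\bA^1$-curve, so that under Condition $(\bullet)$ one gets $m^P_\beta=\sum_{C}l(C)=\sum_C e(\overline{\Pic}^0(C))$, the sum running over the rational curves unibranch at $P$. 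Establishing (A) is then reduced to showing that this weighted count of $\bA^1$-curves is the same at every $P\in D(\beta)$ — a symmetry between different points of the torsor $D(\beta)$ that is presently known unconditionally only for $X=\PP^2$ \cite{Bou19a,Bou19b} and for arithmetic genus $\leq 2$ \cite{CGKT1,CGKT2}, where independent inputs supply the missing symmetry.
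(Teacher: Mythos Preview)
The statement is a conjecture, cited from \cite{CGKT2}; the present paper neither proves it nor justifies the word ``equivalently'' linking its two formulations, so there is no in-paper proof to compare against. You correctly recognize this, and your treatment is appropriate: the equivalence you sketch---combining the log-local correspondence of \cite{vGGR} with the torsor cardinalities $|D(\beta/k)|=(w/k)^2$ and M\"obius inversion along the divisor lattice, with the sign check $(k-1)w/k+(w/k-1)=w-1$---is the standard route, and your identification of the genuinely open content (the $P$-independence of the weighted $\bA^1$-curve count) is accurate and matches the paper's own commentary immediately following the conjecture.
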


What makes this highly nontrivial is that 
$\overline{\text{M}}^P_\beta(X,D)$ can be quite different 
according to the local geometry of $D$ near $P$ (see \cite[\S\S6.1]{CGKT2} and \S\S\ref{subsection:applications}).
Conjecture \ref{conj:logbps} was proven for $\PP^2$ in \cite{Bou19a,Bou19b} using \cite{Gra20}.

\begin{Definition}\label{defn:sho}
Assume that $X$ is regular and $P\in D_{\mathrm{sm}}$. We denote by $|\beta|$ the linear system of curves of class $\beta$ and set
\[
\sho := \{ C \in |\beta| \, : \, C|_D \supseteq wP \text{ as subschemes of } D \},
\]
as well as its open subsets 
\[
\shoo := \{ C \in |\beta| \, : \, C|_D = wP \text{ as subschemes of } D \}
\]
and 
\[
\shooo := \{ C \in |\beta| \, : \, C|_D = wP \text{ as subschemes of } D 
\hbox{ and $C$ is integral}\}. 
\]
Moreover, we write
\[
p_a(\beta) := \frac{1}{2} \beta (\beta+K_X)+1
\]
for the arithmetic genus of members of $|\beta|$.
\end{Definition}

\begin{Remark}
For a rational $X$ with smooth anticanonical $D$, notice that the set of rational curves in $\shoo$ is identified with the set of rational curves in $\sho$.
For a regular surface $X$, a curve $D$ on $X$ and a $\beta$-primitive point $P\in D(\beta)$, note that every member of $\shoo$ is an integral curve, i.e. $\shoo=\shooo$.  Note however that if a curve $C \in |\beta|$ contains the component $D_i$ of $D$ passing through $P$, then $C \in \sho$ but $C\not\in \shoo$.
\end{Remark}

In Section \ref{sec:nonsing} we construct certain moduli spaces, denoted
$\mc{MMI}_\beta$ and $\mc{MMI}_\beta^P$, associated to any smooth surface $X$ and a curve $D$ on it. 
These moduli spaces parametrize
certain sheaves supported on integral curves and having ``maximal intersection'' with $D$. 
 For $\mc{MMI}_\beta^P$ with $P\in D_{\mathrm{sm}}$, the additional condition is imposed that the tangency is at $P$. 
If $X$ is regular and the Abel map of $D$ is immersive at $P$ 
(e.g.\ if $D$ is integral with $p_a(D)>0$ 
or $D$ is anticanonical in rational $X$; see Lemma \ref{lem_mi_is_reduced}(3)), 
then, by Lemma \ref{lem_mi_is_reduced}(2), $\mc{MMI}_\beta$ decomposes 
scheme-theoretically as a disjoint union
\[
\mc{MMI}_\beta = \coprod_{P\in D(\beta)} \mc{MMI}_\beta^P.
\]

\begin{Theorem}[=Theorem \ref{thm:smooth}]\label{thm:nonsing}
Let $X$ be a smooth projective rational surface, $D$ an anticanonical curve on $X$ 
and $P\in D_\mathrm{sm}$. 
Then $\mathcal{MMI}_\beta$ and $\mathcal{MMI}_\beta^P$ 
are nonsingular of dimension $2p_a(\beta)=\beta^2-w+2$. 

Consequently, 
the relative compactified Picard scheme over $\shooo$ 
is nonsingular at a point $[F]$ over $[C]$ 
if $F$ is an invertible $\cO_C$-module near $P$ 
(or, equivalently, $F|_D\cong \cO_C|_D$). 
\end{Theorem}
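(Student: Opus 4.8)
The plan is to reduce to the pieces $\mathcal{MMI}_\beta^P$ and to prove that each is smooth of dimension $2p_a(\beta)$ by a deformation--obstruction calculation that is the logarithmic analogue of Mukai's smoothness theorem for moduli of sheaves on a K3 surface. The role played there by the triviality of the canonical bundle is played here by the \emph{log Calabi--Yau} condition $K_X+D\sim 0$, i.e.\ $\omega_X(D)\cong\cO_X$. By Lemma \ref{lem_mi_is_reduced}(2), (3) (the Abel map of $D$ is immersive since $D$ is anticanonical on a rational surface), one has the scheme-theoretic decomposition $\mathcal{MMI}_\beta=\coprod_{P\in D(\beta)}\mathcal{MMI}_\beta^P$, so it suffices to treat a single $\mathcal{MMI}_\beta^P$; the hypothesis $P\in D_{\mathrm{sm}}$ lets me work with the completed local ring $\cO_{D,P}\cong\CC[[t]]$.

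First I would set up the sheaf-theoretic deformation theory. Fix $[F]\in\mathcal{MMI}_\beta^P$ over $[C]\in\shooo$, so that $C$ is integral of class $\beta$ with $C|_D=wP$ and $F$ is a rank-$1$ torsion-free $\cO_C$-module invertible near $P$; then $F_D:=F|_D\cong\cO_D/\mathfrak{m}_P^{\,w}$ is cyclic of length $w$. Since $D$ is Cartier and $C\not\subset D$, the divisorial sequence $0\to F(-D)\to F\to F_D\to 0$ has $\mathcal{T}or^X_1(F,\cO_D)=0$, whence $\operatorname{Ext}^i_X(F,F_D)\cong\operatorname{Ext}^i_D(F_D,F_D)$, and a local computation over $\CC[[t]]$ gives $\dim\operatorname{Ext}^1_D(F_D,F_D)=w$. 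Applying $\operatorname{RHom}_X(F,-)$ and using $\omega_X\cong\cO_X(-D)$, Serre duality yields $\operatorname{Ext}^2_X(F,F)\cong\operatorname{Hom}_X(F,F(-D))^\vee\cong\operatorname{Hom}_C(F,F(-wP))^\vee=0$, the last vanishing because $F$ is simple ($C$ integral) while $\cO_C(-wP)$ has negative degree. As fixing $P$ rigidifies $F_D$, the tangent space to $\mathcal{MMI}_\beta^P$ at $[F]$ is $\ker\bigl(b_1\colon\operatorname{Ext}^1_X(F,F)\to\operatorname{Ext}^1_D(F_D,F_D)\bigr)$; a Riemann--Roch bookkeeping (from $\operatorname{Ext}^2_X(F,F)=0$ one gets $\dim\operatorname{Ext}^1_X(F,F)=\beta^2+1$, while the long exact sequence gives $\dim\operatorname{image}(b_1)=w-1$) then yields $\dim\ker b_1=\beta^2-w+2=2p_a(\beta)$.

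The heart of the argument, and the step I expect to be the main obstacle, is the vanishing of obstructions. Given a small extension with one-dimensional kernel, a deformation in $\mathcal{MMI}_\beta^P$ first lifts as a sheaf on $X$ because $\operatorname{Ext}^2_X(F,F)=0$; the remaining obstruction to keeping $F_D$ pinned to the constant cyclic module lies in $\operatorname{coker}(b_1)\cong\operatorname{Ext}^2_X(F,F(-D))$. Here the log Calabi--Yau condition enters decisively: Serre duality gives $\operatorname{Ext}^2_X(F,F(-D))\cong\operatorname{Hom}_X(F,F)^\vee\cong\CC$, and the trace map $\operatorname{Ext}^2_X(F,F(-D))\to H^2(X,\omega_X)\cong\CC$ is an isomorphism, so this obstruction space is \emph{pure trace}. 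The Mukai--Artamkin mechanism then identifies the trace of the obstruction with the obstruction to deforming $\det(F|_D)=\cO_D(wP)$; since $\Pic(D)$ is smooth and $\cO_D(wP)$ is the constant (pinned) line bundle, this vanishes, and hence so does the obstruction itself. Making this trace argument rigorous in the logarithmic/relative setting is the crux: it is the precise analogue of the fact that, on a K3, the obstruction to deforming a sheaf is trace-free and therefore zero.

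It then follows that each $\mathcal{MMI}_\beta^P$, and hence $\mathcal{MMI}_\beta$, is nonsingular of dimension $2p_a(\beta)=\beta^2-w+2$. For the final assertion I would observe that $\mathcal{MMI}_\beta$ is exactly the open locus of the relative compactified Picard scheme over $\shooo$ on which $F|_D\cong\cO_C|_D$, equivalently on which $F$ is invertible near $P$ (invertibility in a neighbourhood of $P$ being an open condition). Smoothness of $\mathcal{MMI}_\beta$ therefore transfers to nonsingularity of the relative compactified Picard scheme at any $[F]$ over $[C]$ with $F$ invertible near $P$, as claimed.
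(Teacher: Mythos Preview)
Your tangent-space calculation is correct and coincides with the paper's: both identify $T_{[F]}\mathcal{MMI}_\beta^P$ with the image of $\mathrm{Ext}^1_{\cO_X}(F,F(-D))\to\mathrm{Ext}^1_{\cO_X}(F,F)$ (equivalently, your $\ker b_1$), and both arrive at $\dim=\beta^2-w+2$ via the long exact sequence together with $\mathrm{Hom}(F,F(-D))=0$, $\mathrm{Ext}^2(F,F(-D))\cong\mathrm{Hom}(F,F)^\vee\cong\CC$ from Serre duality and $K_X+D\sim 0$.

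The substantive divergence is in how smoothness is concluded. You attempt a direct obstruction-vanishing argument, locating the obstruction in $\mathrm{coker}(b_1)\cong\mathrm{Ext}^2_X(F,F(-D))\cong\CC$ and then invoking a ``Mukai--Artamkin'' trace identification to kill it. This is the step that is not justified as written. The standard trace argument says that the trace of the obstruction class in $\mathrm{Ext}^2_X(F,F)$ equals the obstruction in $H^2(\cO_X)$ to deforming $\det F$; here you need a \emph{twisted/relative} version, and your formulation ``obstruction to deforming $\det(F|_D)$'' is unclear, since $F|_D$ is a length-$w$ module supported on a fat point and has no evident determinant line bundle on $D$. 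Even granting that the one-dimensional obstruction space is ``pure trace'', you have not exhibited an object whose deformation theory receives that trace and is visibly unobstructed. So as it stands there is a genuine gap.

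The paper sidesteps this entirely and you should too: rather than killing obstructions, it first pins down the \emph{dimension} of $\mathcal{MMI}_\beta^P$ and then matches it with the tangent dimension. Concretely, $\mathcal{MI}_\beta^P\cong\shooo$ is an open subset of a projective space, hence smooth of dimension $p_a(\beta)$; by Lemma~\ref{lem_mmi_mi}(3), $\mathcal{MMI}_\beta^P$ is open in the relative compactified Picard scheme over $\mathcal{MI}_\beta^P$; and the theorem of Altman--Iarrobino--Kleiman (Theorem~\ref{thm_aik}) says this relative compactified Picard is flat over the base with integral fibres of dimension $p_a(\beta)$. Hence $\mathcal{MMI}_\beta^P$ has pure dimension $2p_a(\beta)$, and since you have already shown the tangent space has exactly this dimension, smoothness follows with no obstruction analysis needed. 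This is both shorter and avoids the delicate trace issue; what it costs is the external input of the AIK flatness theorem, which your approach was trying to do without.
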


We use this theorem to calculate the contribution of an $\bA^1$-curve to the log Gromov-Witten invariant. 
The simplest components in the moduli space of genus 0 basic stable log maps consist of (possibly thickened) points. 
We will mainly be concerned with the case $(K_X+D).\beta=0$, 
since otherwise the virtual dimension is nonzero. 
Then, one such case arises from an irreducible $\bA^1$-curve. 
The following result, 
proven in \cite{CGKT2} subject to Theorem \ref{thm:nonsing} (and Lemma \ref{lem_mi_is_reduced}(2)), 
calculates the contribution of such a point to $\nxd$. 

\begin{Corollary}[Proposition 1.7(3) in \cite{CGKT2}]\label{cor:ecompjac}
Let $X$ be a smooth projective rational surface and $D$ an anticanonical curve. 
Let $C$ be an irreducible rational curve of class $\beta$ maximally tangent to $D$ at $P\in D(\beta)$. Denote the normalization map by $n : \PP^1 \to C$ and assume that $C$ is smooth at $P$. 
Then $n$ contributes $l(C):=l(M_{0, 0}(C, [C]))$
to $\nxd$.

\end{Corollary}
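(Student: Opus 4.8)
The plan is to show that, near $n$, the moduli space of basic stable log maps is locally isomorphic to $M_{0,0}(C,[C])$ at the normalization map, so that the contribution of $n$ is precisely its scheme-theoretic length $l(C)$. The whole argument is modelled on the K3 case of \cite{FGS99}, with Theorem \ref{thm:nonsing} playing the role that Mukai's smoothness of $M_{\gamma,1}(S)$ plays there. Since $D$ is anticanonical we have $K_X+D\sim 0$, so the virtual dimension of $\overline{\text{M}}_\beta(X,D)$ vanishes; as $C$ is rigid as an $\bA^1$-curve, $n$ is an isolated point and its contribution is the length of the local ring at $n$. It therefore suffices to establish the two facts (i) and (ii) announced in the introduction and to identify the resulting length with $l(C)=l(M_{0,0}(C,[C]))$.

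For fact (i), I would use the deformation theory of stable log maps \cite{GS13,Chen14,AbramChen14}. Because $P\in D_\mathrm{sm}$ and $C$ is smooth at $P$, the log structure at the contact marking is \'etale-locally that of a smooth divisor, and the basicness condition pins down the log structure on the source. I would check that in this situation the infinitesimal log deformations of $n$ are canonically identified with the infinitesimal deformations of the underlying stable map $\underline{n}:\PP^1\to X$ that preserve the contact order $w$ with $D$ at the marked point; that is, the log structure introduces no deformations beyond the tangency-preserving ones.

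Fact (ii) is the heart of the matter: these tangency-preserving deformations factor scheme-theoretically through $C$. To a family of such deformations over a base $B$ I would associate the family of pushed-forward structure sheaves of the source curves, giving a $B$-point of the relative compactified Picard scheme over $\shooo$ (equivalently, of $\mathcal{MMI}_\beta$) through the sheaf $n_*\cO_{\PP^1}$. Since $C$ is smooth at $P$, the normalization is an isomorphism there, so $n_*\cO_{\PP^1}$ is invertible near $P$ and the ``Consequently'' clause of Theorem \ref{thm:nonsing} applies: the relative compactified Picard scheme is nonsingular at this point, of dimension $2p_a(\beta)$. Exactly as in \cite{FGS99}, this smoothness, together with the rigidity of $C$ and the fixed tangency at $P$, forces the support of the deforming sheaf to remain $C$; hence the deformed map still has image $C$ and the deformation factors through $C$. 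Once this is known, the log deformation space of $n$ as a map to $X$ is identified with the deformation space of $n$ as a stable map to the curve $C$ itself, i.e.\ with the local structure of $M_{0,0}(C,[C])$ at $n$, whose length is $l(C)$ by definition. Therefore $n$ contributes $l(C)$ to $\nxd$.

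The main obstacle is fact (ii). The delicate point is to make rigorous the passage from log (equivalently, tangency-preserving stable map) deformations to deformations of the associated sheaf, and then to extract from the smoothness of $\mathcal{MMI}_\beta$ the rigidity of the image curve, so that the only surviving deformations are those of $n$ as a map to $C$. This is precisely the step where, in the K3 setting of \cite{FGS99}, the smoothness of the relative compactified Jacobian over the linear system is used to reduce $l(M_{0,0}(S,\gamma),n)$ to $l(C)$, and the same mechanism, now supplied by Theorem \ref{thm:nonsing}, is what I would carry out here.
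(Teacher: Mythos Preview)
Your proposal is correct and follows essentially the same approach as the paper: the introduction explicitly outlines facts (i) and (ii) as the two steps, and the paper's contribution to the corollary (which is otherwise proven in \cite{CGKT2}) is precisely Theorem~\ref{thm:nonsing} together with Lemma~\ref{lem_mi_is_reduced}(2), used exactly as you describe via the K3/\cite{FGS99} analogy. One small ingredient you leave implicit is Lemma~\ref{lem_mi_is_reduced}(2)--(3): since $D$ is anticanonical on a rational surface, the Abel map of $D$ is immersive at $P$, so $\mathcal{MMI}_\beta$ decomposes scheme-theoretically as $\coprod_P \mathcal{MMI}_\beta^P$ and the tangency point cannot move infinitesimally---this is what justifies your phrase ``the fixed tangency at $P$'' and is needed before the \cite{FGS99} mechanism can be invoked.
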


Consequently, we have:

\begin{Corollary}[Proposition 1.7(5) in \cite{CGKT2}]\label{cor}
Let $X$ be a del Pezzo surface
and $D$ a smooth anticanonical curve on $X$, 
and let $P\in D(\beta)$ be $\beta$-primitive. 
Assume that $\shoo$ satisfies Condition $(\bullet)$. 
Then
\[
m^P_\beta = l(\overline{\text{M}}^P_\beta(X,D)) = \sum_{\substack{C\in\shoo \\ \text{rational and} \\ \text{unibranch at } P}} l(C).  
\]

\end{Corollary}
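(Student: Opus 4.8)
The plan is to assemble the statement from two facts already secured earlier in the excerpt, together with a direct identification of the $0$-dimensional components of $\overline{\text{M}}^P_\beta(X,D)$. The final equation has the shape $m^P_\beta = l(\overline{\text{M}}^P_\beta(X,D))$ on the left and a sum of local lengths $l(C)$ on the right, so I would treat the two equalities separately and then combine them.

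First I would establish the right-hand equality, namely that the length of $\overline{\text{M}}^P_\beta(X,D)$ is the sum of the $l(C)$ over rational curves $C \in \shoo$ that are unibranch at $P$. The key input is that $P$ is $\beta$-primitive: by the Remark following Definition \ref{defn:sho}, primitivity forces $\shoo = \shooo$, so every member is integral, and there is no stable log map whose image splits into two curves each maximally tangent at $P$ (that second, non-integral, category is exactly what the introduction flags as occurring only at non-$\beta$-primitive points). Thus every component of $\overline{\text{M}}^P_\beta(X,D)$ arises from a single irreducible $\bA^1$-curve, as the remark after Theorem \ref{thm:bps} asserts. Condition $(\bullet)$ then upgrades ``unibranch at $P$'' to ``smooth at $P$'' for the rational curves in question, which is precisely the hypothesis needed to invoke Corollary \ref{cor:ecompjac}: each such curve $C$ contributes $l(C) := l(M_{0,0}(C,[C]))$ to the invariant, and these contributions are disjoint and $0$-dimensional, so they sum to the total length $l(\overline{\text{M}}^P_\beta(X,D))$.

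For the left-hand equality $m^P_\beta = l(\overline{\text{M}}^P_\beta(X,D))$, I would appeal to the recursive Definition \ref{defn:logbps} of the log BPS number together with $\beta$-primitivity. When $P$ is $\beta$-primitive, there is no nontrivial decomposition $\beta = \beta' + \beta''$ into pseudo-effective classes with $P \in D(\beta')$ and $\beta' \cdot D > 0$; in particular no proper divisor $k \mid \beta$ with $k > 1$ can have $P \in D(\beta/k)$, so every term $m^P_{\beta/k}$ for $k > 1$ in the defining sum vanishes. The recursion therefore collapses to $\nxdp = m^P_\beta$. Since the moduli space is $0$-dimensional at $P$, the invariant $\nxdp$ equals the length $l(\overline{\text{M}}^P_\beta(X,D))$ of the space, giving the desired identification. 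Chaining the two equalities yields the formula.

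The main obstacle is the justification that $P$ being $\beta$-primitive genuinely rules out every higher-codimension or reducible contribution to $\overline{\text{M}}^P_\beta(X,D)$, so that the length really is a sum over integral rational curves and nothing else. Concretely, one must verify that no stable log map with reducible or non-reduced image can have maximal tangency concentrated at a $\beta$-primitive $P$ while lying in class $\beta$; this is where the definition of $\beta$-primitivity does the essential work, and it is the step I would write out most carefully. The remaining pieces --- the collapse of the BPS recursion and the per-curve contribution --- are then immediate from Definition \ref{defn:logbps} and Corollary \ref{cor:ecompjac} respectively, so I regard them as routine given the primitivity analysis.
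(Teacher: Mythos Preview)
Your overall strategy is correct and matches the paper's intended route: use $\beta$-primitivity to force every stable log map at $P$ to be the normalization of an integral rational curve (ruling out both reducible images and multiple covers, since either would produce a proper summand $\beta'$ with $P\in D(\beta')$), use Condition $(\bullet)$ to upgrade ``unibranch at $P$'' to ``smooth at $P$'', then invoke Corollary~\ref{cor:ecompjac} for each such curve, and finally collapse the BPS recursion via primitivity.

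There is one genuine gap. You write that Corollary~\ref{cor:ecompjac} says each $C$ contributes $l(C)$, and that ``these contributions are disjoint and $0$-dimensional, so they sum to the total length $l(\overline{\text{M}}^P_\beta(X,D))$''; later you write ``since the moduli space is $0$-dimensional at $P$, the invariant $\nxdp$ equals the length''. Neither inference is valid as stated. A contribution to a virtual invariant is a number, not a length, and a $0$-dimensional moduli space with a perfect obstruction theory of virtual dimension $0$ can perfectly well have nontrivial obstructions, so that $\deg[\overline{\text{M}}]^{\mathrm{vir}}\neq l(\overline{\text{M}})$. What you need is that the full content of Corollary~\ref{cor:ecompjac} (i.e., Proposition~1.7(3) of \cite{CGKT2}, which is what Theorem~\ref{thm:nonsing} is used to prove) delivers two separate facts at each normalization map $n$: the connected component of $\overline{\text{M}}^P_\beta(X,D)$ at $[n]$ is scheme-theoretically identified with $M_{0,0}(C,[C])$, hence has length $l(C)$; and the log obstruction space vanishes there, so the virtual contribution is also $l(C)$. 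The first fact gives $l(\overline{\text{M}}^P_\beta(X,D))=\sum_C l(C)$, the second gives $\nxdp=\sum_C l(C)$, and only then does the chain $m^P_\beta=\nxdp=l(\overline{\text{M}}^P_\beta(X,D))=\sum_C l(C)$ close. You should cite both facts explicitly rather than inferring one from the other via $0$-dimensionality.
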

In Corollary \ref{cor}, Condition $(\bullet)$ is needed to ensure that the compactified Picard variety of $C$ is contained in $\mc{MMI}_\beta$.

\subsection{Contribution of curves with two image components }

In the second part of this paper, Sections \ref{sec:loggw} and \ref{sec:thmmain}, we consider another type of zero-dimensional component, where the images of the stable log maps consist of two maximally tangent rational curves.

\begin{Theorem}\label{theorem_main}
Let $(X, D)$ be a pair consisting of 
a smooth surface and an effective divisor. 
Denote by $\barM_\beta=\barM_\beta(X, D)$ 
the moduli stack of 
maximally tangent genus $0$ basic stable log maps of class $\beta$ 
to the log scheme associated to $(X, D)$. 

Let $Z_1$ and $Z_2$ be proper integral curves on $X$ satisfying the following: 
\begin{enumerate}
\item 
$Z_i$ is a rational curve of class $\beta_i$ maximally tangent to $D$, 
\item
$(K_X+D).\beta_i=0$, 
\item
$Z_1\cap D$ and $Z_2\cap D$ consist of the same point $P\in D_\mathrm{sm}$, 
and 
\item
The normalization maps $f_i: \PP^1\to Z_i$ are immersive and 
$(Z_1.Z_2)_P=\min\{d_1, d_2\}$, 
where $d_i=D.Z_i$. 
\end{enumerate}

Write $d_1=de_1, d_2=de_2$ with $\gcd(e_1, e_2)=1$. 
Then there are $d$ stable log maps in $\barM_{\beta_1+\beta_2}$ 
whose images are $Z_1\cup Z_2$, 
and they are isolated with multiplicity $\min\{e_1, e_2\}$.

When $X$ is projective and $(X,D)$ is log smooth, then these curves contribute $\min\{d_1, d_2\}$ to the log Gromov-Witten invariant $\mathcal{N}_{\beta_1+\beta_2}(X, D)$. 
\end{Theorem}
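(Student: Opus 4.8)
The plan is to deduce the final statement from the local structure already established, namely that the family consists of $d$ isolated log maps, each a fat point of length $\min\{e_1,e_2\}$ in $\barM_{\beta_1+\beta_2}$. First I would check that the virtual dimension vanishes. For genus $0$ maximally tangent basic stable log maps to a log smooth surface $(X,D)$ in class $\beta_1+\beta_2$, the dimension formula reads $\mathrm{vdim}\,\barM_{\beta_1+\beta_2}(X,D) = (2-3) + \int_{\beta_1+\beta_2} c_1(T_X(-\log D)) + 1 = -(K_X+D).(\beta_1+\beta_2)$, which is $0$ by hypothesis (2). Hence $\mathcal{N}_{\beta_1+\beta_2}(X,D)$ is the degree of a zero-dimensional virtual class, and the contribution of our family is the sum, over the $d$ maps, of the local virtual multiplicities.

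The core step is to identify each local virtual multiplicity with the scheme-theoretic length $\min\{e_1,e_2\}$. Since the maps are isolated and $\mathrm{vdim}=0$, I would use the explicit infinitesimal analysis of the first part to present the moduli space, \'etale-locally near each map, as the zero scheme $Z(s)$ of a section $s$ of a vector bundle $V$ on a smooth base $A$ with $\mathrm{rk}\,V = \dim A$, chosen so that $(V,s)$ induces the given perfect obstruction theory and so that $Z(s)$ carries exactly the moduli scheme structure of length $\min\{e_1,e_2\}$. Here the deformations split: those of the two underlying immersions $f_i$ are rigid by \cite[Proposition 4.21(2)]{CGKT2}, so all tangent directions and all obstructions come from the log structure and smoothing parameters concentrated over $P$, and it is precisely this block that furnishes the section $s$. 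Granting such a presentation, the refined top Chern class computes the local contribution as $\int_A c_{\dim A}(V) = \ell(Z(s)) = \min\{e_1,e_2\}$, since the zero locus of a section of a bundle of rank equal to the ambient dimension, when $0$-dimensional, has Euler class equal to its length.

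Summing over the $d$ maps gives a total contribution of $d\cdot\min\{e_1,e_2\} = \min\{de_1,de_2\} = \min\{d_1,d_2\}$, using $d_i = de_i$ and $d>0$. As a consistency check I would compare with the relative theory: by \cite{AMW} the log and relative invariants agree numerically, so the relative contributions---distributed differently because the two moduli spaces are not locally isomorphic---must also sum to $\min\{d_1,d_2\}$. The step I expect to be the main obstacle is the middle one: verifying that the perfect obstruction theory at these maps is genuinely induced by a section of a bundle of the expected rank, i.e.\ that the moduli space is locally a complete intersection of virtual codimension $\dim A$. The difficulty is that the obstructions arise from the log structure at the nodes over $P$, exactly the data distinguishing the log and relative spaces, so the classical normal-bundle computation cannot be imported; one must read the obstruction map off the explicit infinitesimal families and confirm that its components cut out $Z(s)$ with no surviving, unaccounted obstruction space.
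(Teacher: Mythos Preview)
Your proposal addresses only the final sentence of the theorem---the passage from the scheme-theoretic structure to the Gromov--Witten contribution---while taking the structural claim (that there are exactly $d$ basic stable log maps with image $Z_1\cup Z_2$, each an isolated point of length $\min\{e_1,e_2\}$) as ``already established.'' But that structural claim is the substance of the theorem, and the paper's proof is devoted almost entirely to it. The argument (Section~\ref{sec:thmmain}, culminating in Theorem~\ref{thm:412}) proceeds by writing down explicit coordinates on the universal $5$-marked curve and on the log structures, describing precisely what a basic stable log map over $\Spec\CC[s]/(s^{n+1})$ with central fibre mapping to $Z_1\cup Z_2$ must look like (Lemmas~\ref{lem_description_1}, \ref{lem_conditions}), and then determining by direct calculation (Lemmas~\ref{lem_b}--\ref{418}) exactly for which $n$ such families exist. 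The count of $d$ maps comes from the $d_2$ choices of a $d_2$-th root modulo the $e_2$-th roots of unity acting by automorphisms; the length $\min\{e_1,e_2\}$ comes from showing that the tangent space is at most $1$-dimensional and that the order-$(e_1-1)$ family does not extend to order $e_1$ (using the hypothesis $(Z_1.Z_2)_P=\min\{d_1,d_2\}$ at precisely this step, Lemma~\ref{416}). None of this is in your proposal.

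For the final sentence, your outline is correct in spirit and is essentially what the paper leaves implicit. The point you flag as the obstacle---presenting the obstruction theory locally as a section of a bundle of the right rank---is in fact dissolved by the paper's analysis: since the tangent space is shown to be $0$- or $1$-dimensional, the component embeds into a smooth curve (or is a reduced point), the virtual dimension is $0$, so the obstruction space has the same dimension, and on a fat point inside $\bA^1$ the virtual class is just its length. So the deduction $d\cdot\min\{e_1,e_2\}=\min\{d_1,d_2\}$ is immediate once the structure is known; there is no separate obstruction-bundle computation to perform. The work you are missing is upstream of where you started.
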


\begin{Remark}
In Theorem \ref{theorem_main}, the condition that $(Z_1.Z_2)_P=\min\{d_1, d_2\}$ means that $Z_1$ and $Z_2$ are assumed to intersect generically at $P$. 
We expect that this condition is satisfied for general $D$ when $Z_1\not=Z_2$. 

If $d_1\not=d_2$, then as a consequence of the immersivity of $f_i$ and maximal tangency, $Z_1$ and $Z_2$ are smooth at $P$ and the condition $(Z_1.Z_2)_P=\min\{d_1, d_2\}$ holds. 
In the case $d_1=d_2=d$, in analytic coordinates $x, y$ near $P$ with $D=(y=0)$, 
we can write $Z_i=(y=a_ix^d+\dots)$. Then $(Z_1.Z_2)_P=\min\{d_1, d_2\}$ translates into $a_1\not=a_2$. 

An example where this condition is obviously not satisfied is the case $Z_1=Z_2$. 
In this case, the space of log maps with image cycle $Z_1+Z_2$, 
as well as its contribution to the log Gromov-Witten invariant, 
is quite different (\cite[Proposition 6.1]{GPS10}).

\end{Remark}

\begin{Remark}

In the different setting of the degeneration formula \cite{Lirel1,Lirel2,AF16,Chen14deg,KLR,Ran19}, the terms $d_1$ and $d_2$ occur as the number of log lifts. 
For us, $d$ is the number of ways of endowing the underlying stable map with a log structure. And $\min\{e_1, e_2\}$ is the length of the corresponding points of $\barM_{\beta_1+\beta_2}$.

\end{Remark}

It is illuminating to compare Theorem \ref{theorem_main} with the analogous result \cite{Tak17} for the relative stable maps of \cite{Lirel1,Lirel2}. Whereas there is only one relative stable map with multiplicity $\min\{d_1, d_2\}$, there are $d$ log maps each with multiplicity $\min\{e_1, e_2\}$, making the same contribution as expected by \cite{AMW}. 
This illustrates that there can be several ways of
associating a log map to a relative map.

Theorem \ref{theorem_main} is illustrated by Example \ref{ex1}, which the reader may consider as the running example for the second part of this paper. We fully work out the same example in the language of relative stable maps \cite{Lirel1,Lirel2} in Example \ref{ex2} following \cite{Tak17}.

\addtocontents{toc}{\protect\setcounter{tocdepth}{0}}
\section*{Acknowledgements}
\addtocontents{toc}{\protect\setcounter{tocdepth}{1}}

We wish to thank Mark Gross and Dhruv Ranganathan for a discussion on multiplicities of stable log maps that motivated parts of this work. We are grateful to Alexei Oblomkov for discussions on multiplicities of stable maps to surfaces. We thank Helge Ruddat and Travis Mandel for a discussion clarifying the relationship between stable map multiplicity and tropical multiplicity. We especially thank Helge Ruddat for suggesting to us the example of Sections \ref{sec:ex3comppre} and \ref{sec:ex3comp}.

JC is supported by the Korea NRF grant NRF-2018R1C1B6005600. JC would like to thank Korea Institute for Advanced Study for the support where some of the work for this paper was completed. MvG is supported by the EC REA MSCA-IF-746554. SK is supported in part by NSF grant DMS-1502170 and NSF grant DMS-1802242. NT is supported by JSPS KAKENHI Grant Number JP17K05204. This project has received funding from the European Union's Horizon 2020 research and innovation programme under the Marie Sklodowska-Curie grant agreement No 746554.

\addtocontents{toc}{\protect\setcounter{tocdepth}{2}}

\section{Illustration of the main results}\label{sec:ex}

We illustrate the two main results, Corollary \ref{cor:ecompjac} and Theorem \ref{theorem_main}. The examples below may form the basis of future research directions.

\subsection{Analogy with K3 surfaces}\label{sec:K3}

Assume now that $X$ is a del Pezzo surface and that $D$ is smooth anticanonical. Let $\beta\in\hhh_2(X,\ZZ)$ be the class of an integral curve, denote by $p_a(\beta)$ its arithmetic genus and choose $P\in D(\beta)$ to be $\beta$-primitive. Then the linear system $\sho$ is of dimension $p_a(\beta)$ \cite[Proposition 4.15]{CGKT2}. 

Denote by $S$ a K3 surface and let $\gamma$ be a curve class of arithmetic genus $h$. In analogy to Definition \ref{defn:logbps}, one associates genus 0 BPS numbers $r_{0,h}$ to $\gamma$, see \cite{Pan17}. Remarkably \cite{KMPS10}, $r_{0,h}$ depends only on $h$ (and the genus 0 reduced Gromov--Witten invariant of class $\gamma$ depends only on $\gamma^2$ and the divisibility of $\gamma$ in ${\rm H}_2(S,\ZZ)$). Choose a complete linear system $L$ of curves of arithmetic genus $h$. Then $L$ is $h$-dimensional as is $\sho$. Under the assumption that all curves in $L$ are integral, $r_{0,h}$ is given as the sum of $l(C)=e(\overline{\Pic}^0(C))$ for $C$ a rational curve in $L$ (\cite{YZ96,Beau99,Xi99,FGS99}).
This is in perfect analogy to Corollary \ref{cor}.

By \cite[Lemma 4.10]{CGKT2}, saying that $P$ is $\beta$-primitive amounts to $P$ being of maximal order in the group structure on $D$ arising from choosing a suitable element of $D(\beta)$ as zero element. Keeping track of the order of $P$ is analogous to keeping track of the divisibility of $\gamma$ as an element of ${\rm H}_2(S,\ZZ)$. So requiring $P$ being $\beta$-primitive corresponds to $\gamma$ being primitive as an element of ${\rm H}_2(S,\ZZ)$. And if $\gamma$ is primitive, then $r_{0,h}$ also agrees with the genus 0 reduced Gromov--Witten invariant of class $\gamma$.



In \cite{CGKT2}, we calculated $m^P_\beta$  for $\beta$-primitive $P$ and $p_a(\beta)\leq2$. We found (Theorems 1.8 and 1.9) in these cases that $m_\beta^P$ depends only on the intersection number $e(S)-\eta$ for $\eta$ the number of line classes $l$ with $\beta.l=0$. Similarly, $r_{0,h}$ only depends on the intersection number $\beta\cdot\beta=2h-2$ \cite{KMPS10}.

The analogy carries over to SYZ fibrations. For K3 surfaces, \cite{Lin17a,Lin17b} proves that counts of Maslov index 0 disks with boundary on a SYZ-fiber correspond to tropical curves in the base. In the case of $\PP^2$, the analogous correspondence \cite{Gra20} is between log BPS numbers and tropical curves in the scattering diagram.

\subsection{Fully worked out example and comparison with tropical multiplicity}
\label{sec:ex:sec}

Armed with Corollary \ref{cor:ecompjac} and Theorem \ref{theorem_main}, we can compute genus 0 maximal tangency log Gromov--Witten invariants of low degrees by explicitly finding all the stable log maps that contribute and weighting them with their multiplicity. We fully work this out in one example adapted from the tropical vertex \cite{GPS10} and compare it with the analogous tropical picture.

One of the features of log Gromov--Witten theory is that each stable log map admits a tropicalization coming from the domain curve. These tropical curves carry multiplicities that are related to the log structure. In this, it is different from the multiplicities of Corollary \ref{cor:ecompjac}, which come from the stable maps. This example illustrates this difference in the case of cuspidal cubics. Classically their multiplicity is given by Corollary \ref{cor:ecompjac}. On the tropical side, while we may guess what cuspidal tropical cubics are (moving $P_1,\dots,P_6$ in Figures \ref{fig:trop1} and \ref{fig:trop2} leading to vertices of valency $>3$), it is not clear what their multiplicities are and the example considered might give some insight into that.

In addition, we consider the contributions to the log invariants of reducible curves in Sections \ref{sec:223} and \ref{sec:ex3comppre}. One may write down the corresponding reducible tropical curves and stipulate what their tropical multiplicity is. We leave that to future work and simply find the tropical curves in a generic situation.

Start with $\PP^2$ with anticanonical boundary a cycle of 3 disjoint lines $\widetilde{D}=\widetilde{D}_1+\widetilde{D}_2+D_{\rm out}$. We blow up 3 smooth points $P_1,P_2,P_3$ on $\widetilde{D}_1$ and 3 smooth points $P_4,P_5,P_6$ on $\widetilde{D}_2$ leading to 6 exceptional divisors $E_{ij}$, $i=1,2$, $j=1,2,3$. The resulting surface $S$ is a weak del Pezzo surface. We choose as its anti-canonical boundary the strict transform of $\widetilde{D}$, namely $D=D_1+D_2+D_{\rm out}$ with $D_i\sim H-\sum_{j=1}^3 E_{ij}$ for $H$ the pullback of the hyperplane class in $\PP^2$. Let $\beta=3H-\sum_{i,j} E_{ij}$ be the anticanonical curve class, which is of arithmetic genus 1. We compute the invariant $\mathcal{N}_{\beta}(S,D)$ of genus 0 curves of class $\beta$ maximally tangent to $D$, necessarily meeting $D$ at a smooth point of $D_{\rm out}$.

We first compute $\mathcal{N}_{\beta}(S,D)$ classically. 
We use the fact that by \cite[Proposition 5.3]{GPS10}, $\mathcal{N}_{\beta}(S,D)$ equals the virtual count of rational curves in $\PP^2$ of class $3H$ passing through $P_1,\dots,P_6$ and maximally tangent to $D_{\rm out}$.

The classical count consists in finding all the rational cubics contributing to the count and weighting them by their multiplicities of Corollary \ref{cor:ecompjac} and Theorem \ref{theorem_main}. Provided they are smooth at the point of contact with $D_{\rm out}$, nodal cubics have multiplicity 1 and cuspidal cubics have multiplicity 2. More interesting contributions arise when the point of contact is not smooth.

By dimensional reasons, there are only a finite number of possible points of contact $P$ with $D_{\rm out}$. Let $C_1$ and $C_2$ be two maximally tangent rational curves passing through $P_1,\dots,P_6$ each and meeting $D_{\rm out}$ at $Q_1: z=z_1$ and $Q_2: z=z_2$, respectively, where $z$ is a coordinate with $(D_1\cup D_2)\cap D_{\rm out}=\{0, \infty\}$. We take the relationship
\[
(C_1-E_1-\cdots-E_6)-(C_2-E_1-\cdots-E_6)\sim 0 \text{ on } S,
\]
which restricts to
\[
3Q_1-3Q_2\sim 0\in {\rm Pic}^0(D_{\rm out}\cup D_1\cup D_2)\simeq \mathbb{C}^*
\]
to obtain that
\[
(z_1/z_2)^3 = 1
\]
as the condition for $3Q_1-3Q_2$ to be a principal divisor on $D_{\rm out}\cup D_1\cup D_2$ (explicitly, the divisor of the rational function equal to $(z-z_1)^3/(z-z_2)^3$ on  $D_{\rm out}$ and identically equal to 1 on $D_1\cup D_2$).  Thus, the possible points of contact form a torsor for $\mu_3$ and in particular there are 3 of them.

We fix $P$ one of these points of contact and compute the invariant $\mathcal{N}_{\beta}^P(S,D)$ at $P$. Then the cubics passing through $P_1,\dots,P_5$ and maximally tangent to $D_{\rm out}$ at $P$ form a pencil and all pass through $P_6$.
Resolving the base points, we obtain an elliptic fibration $Y$ that is a surface of Euler number 12. The resolution is obtained by blowing up $S$ at $P$ and then blowing up 2 more times in succession at the unique point over $P$ in the strict transform of $D_{\rm out}$. Denote the exceptional divisors $E_1$, $E_2$, $E_3$ according to the order of blow up.

We find all the rational cubics that contribute to the count. The cubic $\widetilde{D}_1+\widetilde{D}_2+D_{\rm out}$ does not contribute to $\mathcal{N}_{\beta}(S,D)$.
Its proper transform $F_0$ is a fiber of $Y$, a cycle of 3 $\PP^1$s with Euler number 3.
By the same argument as in \cite[Section 5.3]{BBvG2}, the pencil contains a unique member corresponding to a cubic $C$ that is singular at $P$. Unlike \cite[Section 5.3]{BBvG2}, $C$ can have up to three branches at $P$:
\begin{enumerate}
    \item $C$ may be irreducible and nodal at $P$. Then it does not contribute to $\mathcal{N}_{\beta}(S,D)$.
    \item $C$ may be cuspidal at $P$. Then it contributes 1 to $\mathcal{N}_{\beta}(S,D)$ by \cite[Proposition 4.21(2)]{CGKT2}.
    \item $C$ may be reducible and nodal at $P$. Then it is the union of a conic tangent to $D_{\rm out}$ at $P$ and a line passing through $P$. By Theorem \ref{theorem_main}, $C$ contributes 1 to $\mathcal{N}_{\beta}(S,D)$.
    \item $C$ is the union of three lines passing through $P$. This is a case that Theorem \ref{theorem_main} does not cover. We compute its contribution to be 3 below.
\end{enumerate}

In all cases, we will see that
\[
\mathcal{N}^P_{\beta}(S,D) = 6,
\]
so that
\[
\mathcal{N}_{\beta}(S,D) = 3\times 6 = 18.
\]

\subsubsection{$C$ is irreducible and nodal at $P$}

Assume first that $P_1,\dots,P_6$ are general (within the restriction of lying on $\widetilde{D}_1\cup\widetilde{D}_2$), so that $C$ is nodal at $P$, with one branch tangent to $D_{\rm out}$. The cycle of 3 $\PP^1$s consisting of the strict transforms of $C$, $E_1$ and $E_2$ gives a fiber $F_1$ of $Y$. There is a kind of symmetry between $F_0$ and $F_1$: Contracting $E_3$, $D_{\rm out}$ and either $\widetilde{D}_1$ or $\widetilde{D}_2$, we get the dual picture. The fiber $F_1$ has Euler number 3 and does not contribute to the count.
All other curves in the fibration are either smooth cubics, which have Euler number 0 or rational cubics smooth at $P$. Nodal cubics have Euler number 1 and cuspidal cubics Euler number 2. They all contribute to $\mathcal{N}_{\beta}(S,D)$.

So if $C$ is nodal at $P$, by the additivity of Euler numbers,
\[
\mathcal{N}^P_{\beta}(S,D) = \# \{ \text{ nodal cubics } \} + 2 \, \# \{ \text{ cuspidal cubics } \} = e(Y) - e(F_0) - e(F_1) = 6.
\]

For specific choices of $P_1,\dots,P_6$, cuspidal cubics smooth at $P$ appear. For example, let $D_{\rm out}$ be the line at infinity, let $\widetilde{D}_1$ be given by $y+1=0$ and let $\widetilde{D}_1$ be given by $x+y+1=0$. Take $P_1,\dots,P_6$ to be the intersections of $y^2=x^3$ with $\widetilde{D}_1$ and $\widetilde{D}_2$. Then $y^2=x^3$ is a cuspidal cubic smooth at $D_{\rm out}$ that contributes 2 to $\mathcal{N}_{\beta}(S,D)$. As in the case of regular Gromov--Witten theory, this is the example of two nodal cubics coming together in a deformation to form a cuspidal one.

\subsubsection{$C$ is cuspidal at $P$}

If $C$ is cuspidal at $P$, then $F_1$, the strict transform of $C$ joined with $E_1$ and $E_2$ is a chain of 3 $\PP^1$s, of Euler number 4 and leading to a different fibration $Y$. Nonetheless,
\begin{align*}
\mathcal{N}^P_{\beta}(S,D) 
&= (\text{Contribution of } C) + \# \{ \text{ nodal cubics } \} + 2 \, \# \{ \text{ cuspidal cubics } \} \\ &= 1 + e(Y) - e(F_0) - e(F_1) = 6.
\end{align*}

\subsubsection{$C$ is reducible and nodal at $P$}

\label{sec:223}

Assume that $P_1,P_2,P_4,P_5$ are general. Denote by $C_2$ one of the two conics that pass through $P_1,P_2,P_4,P_5$ and are tangent to $D_{\rm out}$. Denote by $P$ the point of intersection of $C_2$ with $D_{\rm out}$. Choose $L$ a general line passing through $P$ and denote by $P_3$ and $P_6$ its points of intersection with $\widetilde{D}_1$ and $\widetilde{D}_2$, respectively.

Given these choices of $P_1,\dots,P_6$, the singular cubic in the pencil is given by $C_2\cup L$. It contributes 1 to  $\mathcal{N}_{\beta}(S,D)$ by Theorem \ref{theorem_main}. As a fiber $F_1$ of $Y$ it becomes a cycle of 4 $\PP^1$s of Euler number 4. Then
\begin{align*}
\mathcal{N}^P_{\beta}(S,D) &= (\text{Contribution of } C_2\cup L) + \# \{ \text{ nodal cubics } \} + 2 \, \# \{ \text{ cuspidal cubics } \} \\ &= 1 + e(Y) - e(F_0) - e(F_1) = 6.
\end{align*}

\subsubsection{$C$ has three branches at $P$}
\label{sec:ex3comppre}

Assume that $P_1,\dots,P_6$ are such that the lines joining $P_{i\!\!\mod 3}$ all meet at $P\in D_{\rm out}$. Then the union of these lines $L_1\cup L_2 \cup L_3$ is the singular member of the pencil. Denote by ${\rm Contr}^{(S,D)}(1,1,1)$ the contribution of $L_1\cup L_2 \cup L_3$ to $\mathcal{N}_{\beta}(S,D)$. In $Y$, it yields a tree of 5 $\PP^1$s, $F_1$, of Euler number 6. By deformation-invariance,
\begin{align*}
6= & \,\mathcal{N}^P_{\beta}(S,D) = {\rm Contr}^{(S,D)}(1,1,1) + \# \{ \text{ nodal cubics } \} + 2 \, \# \{ \text{ cuspidal cubics } \} \\
= & \, {\rm Contr}^{(S,D)}(1,1,1) + e(Y) - e(F_0) - e(F_1) = {\rm Contr}(1,1,1) + 3.
\end{align*}

We conclude that ${\rm Contr}^{(S,D)}(1,1,1)=3$. Moreover, among the other curves contributing, there are either 3 nodal cubics smooth at $P$ or 1 nodal cubic and 1 cuspidal cubic. The former case can be verified by looking at $\PP^2$ with its toric boundary and the pencil
\[
a(Z^3 + X^3 + Y^3 + 3X^2Y + 3XY^2) + bXYZ,
\]
for $[a:b]\in\PP^1$.

\subsubsection{Tropical count}

We next compute the same invariant tropically. To do so, we unwind the tropical computation of \cite{GPS10}. In fact, $\mathcal{N}_{\beta}(S,D) = 18$ is computed in \cite[Section 6.4]{GPS10} from a scattering diagram computation. The tropical count is the count of tropical curves in the fan of $\PP^2$, weighted by their tropical multiplicity, that have only one ray of weight 3 going into the direction corresponding to $D_{\rm out}$ and 3 rays each of weight 1 coming from fixed directions corresponding to $\widetilde{D}_1$ and $\widetilde{D}_2$. We refer to tropical correspondence results and multiplicity calculations to \cite{Mikh05,NiSi,vGOR} and especially \cite{ManRu,ManRu19} for incidence conditions along the toric boundary as is the case here. Here we content ourselves with describing the tropical curves and computing their multiplicity. If we choose $P_1,\dots,P_6$ as in Figure \ref{fig:trop1}, solving the combinatorial problem leads to the 3 tropical curves of Figures \ref{fig:trop1} and \ref{fig:trop2}. Their multiplicity is given by the product of the multiplicities of the 3-valent vertices and are indicated in the figures. One of them has multiplicity 12, the other two each have multiplicity 3. We thus recover $\mathcal{N}_{\beta}(S,D) = 18$.

\begin{figure}[htb]
\begin{center}
\caption{A tropical curve of multiplicity 12 in the fan of $\PP^2$}
\label{fig:trop1}
\begin{tikzpicture}[smooth, scale=0.7]
\draw[step=1cm,gray,very thin] (-6.5,-2.5) grid (2.5,6.5);
\draw[ultra thick] (-2,0) to (-6.5,0);
\draw[ultra thick] (-2,-2.5) to (-2,0);
\draw[ultra thick] (-2,0) to (2.5,4.5);
\node at (-1.6,-2.3) {$\scriptstyle{\widetilde{D}_2}$};
\node at (2.4,3.7) {$\scriptstyle{D_{\rm out}}$};
\node at (-6.3,-0.35) {$\scriptstyle{\widetilde{D}_1}$};
\node at (-6.3,1.2) {$\scriptstyle{P_3}$};
\node at (-6.3,2.2) {$\scriptstyle{P_2}$};
\node at (-6.3,3.7) {$\scriptstyle{P_1}$};
\node at (-5.3,-2.3) {$\scriptstyle{P_4}$};
\node at (-3.3,-2.3) {$\scriptstyle{P_5}$};
\node at (1.3,-2.3) {$\scriptstyle{P_6}$};
\draw[thick] (-6.5,1) to (-5,1) to (-4,2);
\draw[thick] (-5,-2.5) to (-5,1);
\draw[thick] (-6.5,2) to (-4,2) to (-3,2.5);
\draw[thick] (-3,-2.5) to (-3,2.5) to (-2,3.5);
\node at (-2.3,2.9) {$\scriptsize{2}$};
\filldraw (-3,2.5) circle (1mm);
\node at (-3.25,2.7) {$\scriptsize{2}$};
\filldraw (-2,3.5) circle (1mm);
\node at (-2,3.8) {$\scriptsize{2}$};
\draw[thick] (-6.5,3.5) to (-2,3.5) to (1,5.5);
\filldraw (1,5.5) circle (1mm);
\node at (0.75,5.6) {$\scriptsize{3}$};
\draw[thick] (1,-2.5) to (1,5.5) to (2.3,6.8);
\node at (1.85,6.7) {$\scriptsize{3}$};
\end{tikzpicture}
\end{center}
\end{figure}
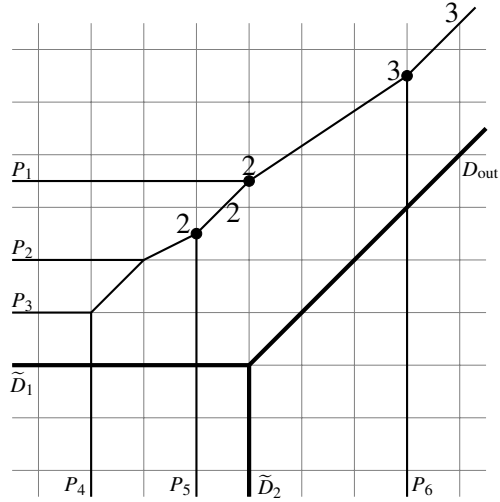

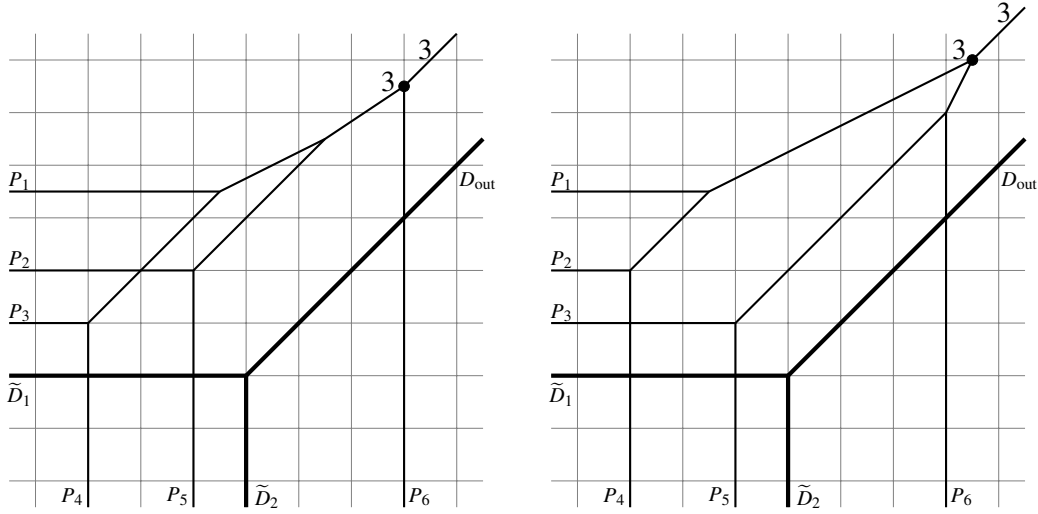
\begin{figure}[htb]
\begin{center}
\caption{Two tropical curves of multiplicity 3 each in the fan of $\PP^2$}
\label{fig:trop2}
\begin{tikzpicture}[smooth, scale=0.7]
\draw[step=1cm,gray,very thin] (-6.5,-2.5) grid (2.5,6.5);
\draw[ultra thick] (-2,0) to (-6.5,0);
\draw[ultra thick] (-2,-2.5) to (-2,0);
\draw[ultra thick] (-2,0) to (2.5,4.5);
\node at (-1.6,-2.3) {$\scriptstyle{\widetilde{D}_2}$};
\node at (2.4,3.7) {$\scriptstyle{D_{\rm out}}$};
\node at (-6.3,-0.35) {$\scriptstyle{\widetilde{D}_1}$};
\node at (-6.3,1.2) {$\scriptstyle{P_3}$};
\node at (-6.3,2.2) {$\scriptstyle{P_2}$};
\node at (-6.3,3.7) {$\scriptstyle{P_1}$};
\node at (-5.3,-2.3) {$\scriptstyle{P_4}$};
\node at (-3.3,-2.3) {$\scriptstyle{P_5}$};
\node at (1.3,-2.3) {$\scriptstyle{P_6}$};
\draw[thick] (-6.5,1) to (-5,1) to (-2.5,3.5);
\draw[thick] (-5,-2.5) to (-5,1);
\draw[thick] (-6.5,2) to (-3,2) to (-0.5,4.5) to (1,5.5);
\draw[thick] (-3,-2.5) to (-3,2);
\draw[thick] (-6.5,3.5) to (-2.5,3.5) to (-0.5,4.5);
\filldraw (1,5.5) circle (1mm);
\node at (0.7,5.6) {$\scriptsize{3}$};
\draw[thick] (1,-2.5) to (1,5.5) to (2,6.5);
\node at (1.4,6.2) {$\scriptsize{3}$};
\end{tikzpicture}
\hspace{2mm}
\begin{tikzpicture}[smooth, scale=0.7]
\draw[step=1cm,gray,very thin] (-6.5,-2.5) grid (2.5,6.5);
\draw[ultra thick] (-2,0) to (-6.5,0);
\draw[ultra thick] (-2,-2.5) to (-2,0);
\draw[ultra thick] (-2,0) to (2.5,4.5);
\node at (-1.6,-2.3) {$\scriptstyle{\widetilde{D}_2}$};
\node at (2.4,3.7) {$\scriptstyle{D_{\rm out}}$};
\node at (-6.3,-0.35) {$\scriptstyle{\widetilde{D}_1}$};
\node at (-6.3,1.2) {$\scriptstyle{P_3}$};
\node at (-6.3,2.2) {$\scriptstyle{P_2}$};
\node at (-6.3,3.7) {$\scriptstyle{P_1}$};
\node at (-5.3,-2.3) {$\scriptstyle{P_4}$};
\node at (-3.3,-2.3) {$\scriptstyle{P_5}$};
\node at (1.3,-2.3) {$\scriptstyle{P_6}$};
\draw[thick] (-6.5,1) to (-3,1) to (1,5) to (1.5,6);
\draw[thick] (-5,-2.5) to (-5,2);
\draw[thick] (-6.5,2) to (-5,2) to (-3.5,3.5) to (1.5,6) to (2.5,7);
\draw[thick] (-3,-2.5) to (-3,1);
\draw[thick] (-6.5,3.5) to (-3.5,3.5);
\filldraw (1.5,6) circle (1mm);
\node at (1.25,6.2) {$\scriptsize{3}$};
\node at (2.1,6.9) {$\scriptsize{3}$};
\draw[thick] (1,-2.5) to (1,5);
\end{tikzpicture}
\end{center}
\end{figure}

The tropical curves come from the tropicalization of the domain curves. This process is insensitive to the singularities of the image curves, which is what is picked up by the stable map multiplicity of Corollary \ref{cor:ecompjac}.

One may move around the points $P_1,\dots,P_6$ as in the classical case and find reducible tropical curves. This may lead to an understanding of what their multiplicity should be.

\subsubsection{Generalization}

\label{ex:GPS}

More generally, one could consider the following setting of the tropical vertex \cite{GPS10}: start with a toric surface $X$, choose a prime toric divisor $D_{\rm out}$ and denote the other prime toric divisors by $D_1,\dots,D_n$. For a curve class $\beta$, choose an intersection profile ${\rm \mathbf{P}}=({\rm \mathbf{P}}_1,\dots,{\rm \mathbf{P}}_n)$ for ordered partitions ${\rm \mathbf{P}}_i=p_{i1}+\cdots p_{il_i}$ and $|{\rm \mathbf{P}}_i|=\beta\cdot D_i$. We further choose distinct points $x_{i1},\dots,x_{il_i}\in D_i\setminus\cup_{i\neq j}D_j$. We blow up the $x_{ij}$ leading to the surface $\nu : \widetilde{X} \to X$ with exceptional divisors $E_{ij}$. Choose as anticanonical curve $\widetilde{D}$ the strict transform of the toric boundary of $X$. Then the curve class $\widetilde{\beta}=\nu^*(\beta)-\sum_{i=1}^{n}\sum_{j=1}^{l_i}p_{ij}E_{ij}$ on $\widetilde{X}$ meets $\widetilde{D}$ only in smooth points of $D_{\rm out}$.

The invariants $\mathcal{N}_{\widetilde{\beta}}(\widetilde{X},\widetilde{D})$ can be computed by the scattering diagrams/tropical methods of \cite{GPS10}. Proposition 5.3 in \cite{GPS10} expresses $\mathcal{N}_{\widetilde{\beta}}(\widetilde{X},\widetilde{D})$ in terms of invariants of $X$, maximally tangent to $D_{\rm out}$ and with incidence conditions along $D_1,\dots,D_n$. The latter can be computed by finding their associated tropical curves and tropical multiplicities as in \cite{ManRu}.
In fact, by \cite[Proposition 4.3]{GPS10}, for generic choices of $x_{ij}$, the higher-dimensional components of the moduli space only consist of multiple covers whose contributions are computed by \cite[Proposition 6.1]{GPS10}. Then Corollary \ref{cor:ecompjac} gives the contribution of the remaining zero-dimensional components. If the $x_{ij}$ are not generic, there can be more complicated contributions such as ${\rm Contr}^{(S,D)}(1,1,1)$ as in Section \ref{sec:ex3comppre}.


It is informative to compare with the invariants of maximal tangency with each boundary component as studied in \cite{BBvG1,BBvG2,BBvG3,NR}, for any cluster variety. To obtain a problem of virtual dimension 0, we need some insertions. If these are point insertions with psi classes, then \cite[Proposition 6.1]{Man19} guarantees that the invariants are enumerative for generic choices of points. 
The case of log K3 surfaces leads to more complicated components of the moduli space and we turn to it now.

\subsection{Applications of Theorem \ref{theorem_main} and future directions}
\label{subsection:applications}

The components that occur in $\overline{\text{M}}_\beta(X,D)$ are classified by \cite[Corollary 2.10]{CGKT2} (see Proposition \ref{cor_description_log_maps}). Outside of the calculations of this paper, the only other components whose contribution to log Gromov-Witten invariants is known are multiple covers over rigid maximally tangent rational curves \cite[Proposition 6.10]{GPS10}. 

Knowledge of the contributions of some components would allow for new enumerative calculations. As an illustration, consider $(X,D)=(\PP^2,E)$ for $E$ an elliptic curve, and the log BPS numbers $m^P_{5H}$ of degree $5H\in\hhh_2(\PP^2,\ZZ)$ of Definition \ref{defn:logbps} for $P\in D(5H)$. According to Conjecture \ref{conj:logbps} (proven in \cite{Bou19a,Bou19b}), $m^P_{5H}$ is constant whether $P$ is a flex point or a $5H$-primitive point. For the latter, $m^P_{5H}=113$ is an actual count of rational curves with multiplicities given by Corollary \ref{cor:ecompjac}.

Take now $P$ to be a flex point. Denote by $k_5$ the number of degree 5 rational curves maximally tangent to $D$ at $P$. Cf.\ \cite[Section 6]{CGKT2} and \cite{Tak96}, provided $D$ is general, in lower degree there are 1 flex line, 2 nodal cubics and 8 nodal quartics. Taking the description of \cite[Section 6]{CGKT2}, we have that
\begin{align*}
m^P_{5H}=113&=\mathrm{DT}^{(2)}_5+8\cdot 3 \cdot \min\{4,1\} + 2 \cdot \mathrm{Contr}^{(\PP^2,E)}(3,1^2) + k_5 \\
&= 5 + 24 + 2 \cdot \mathrm{Contr}^{(\PP^2,E)}(3,1^2) + k_5.
\end{align*}
Here $\rm{DT}^{(2)}_5$ is the 5th $2$-loop quiver invariant, which is the contribution of $5:1$ multiple covers over the flex line to $m^P_{5H}$ \cite[Proposition 6.4]{CGKT2}. The term $3 \cdot \min\{4,1\}$ is the contribution according to Theorem \ref{theorem_main} of the 3 stable log maps with image $\text{(fixed quartic)} \cup \text{(flex line)}$. $\rm{Contr}^{(\PP^2,E)}(3,1^2)$ is the unknown contribution of the component whose general points correspond to stable log maps $\PP^1\cup\PP^1\cup\PP^1\to \PP^2$ with the central component collapsed, the first component an immersion into one of the nodal cubics and the third component a $2:1$ cover over the flex line. 

So knowing $\rm{Contr}^{(\PP^2,E)}(3,1^2)$ one would be able to calculate $k_5$. In fact, local BPS numbers are calculated through local mirror symmetry \cite{CKYZ99} and so are the $m^P_{dH}$ via Conjecture \ref{conj:logbps} (proven in \cite{Bou19a,Bou19b}). Hence, knowing the contributions of each component of $\overline{\text{M}}_\beta(X,D)$ corresponding to stable log maps with reducible image would recursively allow to calculate $k_d$, the number of rational degree $d$ curves maximally tangent to a flex point, for all $d$. Moreover, the same analysis holds for counts of maximally tangent rational curves at any other point $P\in D(dH)$.

For a more in depth analysis of the above situation we refer to \cite{BN20}.

\subsubsection{The case of 3 components}
\label{sec:ex3comp}

We saw in Section \ref{sec:ex3comppre} that for the surface $(S,D)$, ${\rm Contr}^{(S,D)}(1,1,1)=3$. This may give insight as how to generalize Theorem \ref{theorem_main} to more complicated components. We leave this to future work.

\addtocontents{toc}{\protect\setcounter{tocdepth}{1}}

\section{Nonsingularity of the relative compactified Picard scheme} \label{sec:nonsing}

We start with a couple of lemmas that might belong to common knowledge.

\begin{Lemma}\label{lem_hilb}
If $X$ is a regular surface, 
a connected component of $\mathrm{Hilb}(X)$ containing a curve $C$ 
is nonsingular and coincides with $|C|$. 
\end{Lemma}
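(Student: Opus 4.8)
The plan is to analyze the tangent space to $\mathrm{Hilb}(X)$ at the point $[C]$ and show it has the expected dimension, while simultaneously exhibiting $|C|$ as a smooth subvariety of the right dimension sitting inside the Hilbert scheme. For an effective divisor $C$ on a smooth surface $X$, the curve $C$ is a local complete intersection (indeed a Cartier divisor), so its normal sheaf is the line bundle $\mathcal{N}_{C/X} = \shO_C(C) = \shO_X(C)|_C$. The tangent space to $\mathrm{Hilb}(X)$ at $[C]$ is $\hhh^0(C, \mathcal{N}_{C/X}) = \hhh^0(C, \shO_C(C))$, and the obstruction space is $\hhh^1(C, \shO_C(C))$. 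So first I would compute these cohomology groups in terms of data on $X$.

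The key computational step is the exact sequence coming from the divisor $C$ on $X$:
\begin{equation*}
0 \to \shO_X \to \shO_X(C) \to \shO_C(C) \to 0.
\end{equation*}
Taking the long exact sequence in cohomology, and using that $X$ is regular (so $\hhh^1(\shO_X) = 0$), I would extract
\begin{equation*}
0 \to \hhh^0(\shO_X) \to \hhh^0(\shO_X(C)) \to \hhh^0(\shO_C(C)) \to 0
\end{equation*}
together with an injection $\hhh^1(\shO_X(C)) \hookrightarrow \hhh^1(\shO_C(C))$ whose cokernel involves $\hhh^2(\shO_X)$. The first sequence shows $\HHH^0(\shO_C(C)) = \HHH^0(\shO_X(C)) - 1 = \dim |C|$, which is exactly the dimension of the tangent space to $|C|$ at the point $[C]$. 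Thus the inclusion $|C| \hookrightarrow \mathrm{Hilb}(X)$ induces an isomorphism on tangent spaces at $[C]$.

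To conclude, I would argue that since $|C| = \mathbb{P}(\hhh^0(\shO_X(C)))$ is smooth of dimension equal to $\dim T_{[C]}\mathrm{Hilb}(X)$, and $|C|$ is a closed subscheme of $\mathrm{Hilb}(X)$ containing $[C]$, the scheme $\mathrm{Hilb}(X)$ must be smooth at $[C]$ with its connected component through $[C]$ coinciding with $|C|$. Concretely, the chain of inequalities $\dim_{[C]} |C| = \dim T_{[C]}|C| = \dim T_{[C]}\mathrm{Hilb}(X) \geq \dim_{[C]}\mathrm{Hilb}(X) \geq \dim_{[C]} |C|$ forces equality throughout, giving both smoothness and the identification with $|C|$ locally; since this holds at every point of $|C|$ and $|C|$ is connected, it is the whole connected component. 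The main subtlety to handle carefully is the linear-equivalence-versus-Hilbert-scheme distinction: a priori the connected component could contain deformations of $C$ that are not linearly equivalent to $C$, and ruling this out is precisely what the tangent space computation accomplishes via regularity of $X$ — the vanishing of $\hhh^1(\shO_X)$ is exactly what guarantees that every first-order deformation in $\mathrm{Hilb}(X)$ comes from moving within the linear system. I expect this point, rather than any individual cohomology calculation, to be where the hypothesis of regularity does its essential work and thus where care is most needed.
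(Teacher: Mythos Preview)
Your proposal is correct and follows essentially the same approach as the paper: both use the exact sequence $0\to\shO_X\to\shO_X(C)\to\shO_C(C)\to 0$ together with $\hhh^1(\shO_X)=0$ to identify $T_{[C]}\mathrm{Hilb}(X)\cong\hhh^0(\shO_C(C))$ with the tangent space of $|C|$, and then conclude by noting that the smooth projective $|C|$ is embedded in $\mathrm{Hilb}(X)$ with matching tangent spaces. Your write-up adds a helpful explicit chain of inequalities and a remark on where regularity enters, but the argument is the same.
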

\begin{proof}
Let $\Lambda$ be a complete linear system and $C$ a member of $\Lambda$. 
The first order deformations of $C$ in $\mathrm{Hilb}(X)$ are given by 
\[
\mathrm{Hom}_\cOX(\mathcal{I}_C,\mathcal{O}_C)\simeq 
\mathrm{Hom}_\cOX(\mathcal{O}_X(-C),\mathcal{O}_C)\simeq H^0(\mathcal{O}_C(C)).
\]
From the short exact sequence 
$0\to\mathcal{O}_X\to \mathcal{O}_X(C)\to \mathcal{O}_C(C)\to 0$ 
we obtain the exact sequence 
\[
0\to H^0(\mathcal{O}_X)\to H^0(\mathcal{O}_X(C))
\to H^0(\mathcal{O}_C(C)) \to H^1(\mathcal{O}_X)=0. 
\]
The space $H^0(\mathcal{O}_X(C))/H^0(\mathcal{O}_X)$ 
can be regarded as the tangent space of $\Lambda$ at $C$, 
and this exact sequence shows that the natural map 
$T_C\Lambda\to T_C\mathrm{Hilb}(X)$ is an isomorphism. 
Since $\Lambda$ is projective and nonsingular (by definition) 
and is embedded into $\mathrm{Hilb}(X)$, 
it can be identified with a connected component of $\mathrm{Hilb}(X)$. 
\end{proof}

For a curve class $\beta$, 
let $M_\beta(X)$ denote 
the moduli space of stable $1$-dimensional sheaves of class $\beta$ on $X$ 
with respect to a certain polarization. 
In the following, we will mainly consider sheaves $F$ 
that can be regarded as torsion-free sheaves of rank $1$ 
on integral curves of class $\beta$. 
Such a sheaf $F$ defines a point $[F]\in M_\beta(X)$ for any polarization. 

\begin{Lemma}
Let $X$ be a smooth surface, $\beta$ a curve class with $K_X.\beta<0$ 
and $C$ an integral curve of class $\beta$. 
If $F$ is a torsion-free rank 1 sheaf on $C$, 
then $M_\beta(X)$ is nonsingular of dimension $\beta^2+1$ 
at $[F]$. 
\end{Lemma}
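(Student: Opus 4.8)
The plan is to compute the tangent and obstruction spaces to $M_\beta(X)$ at $[F]$ via deformation theory of sheaves, and to show the obstruction space vanishes so that the moduli space is smooth of the expected dimension. First I would recall that for a stable sheaf $F$, the Zariski tangent space to $M_\beta(X)$ at $[F]$ is $\mathrm{Ext}^1_{\cOX}(F,F)$ and the obstructions lie in $\mathrm{Ext}^2_{\cOX}(F,F)$. Since $F$ is stable, the trace-free part governs the deformations, but because $F$ has rank $0$ (it is a $1$-dimensional sheaf), it is cleaner to work directly with the full $\mathrm{Ext}$ groups and the Euler characteristic. The strategy is therefore: (i) show $\mathrm{Ext}^2_{\cOX}(F,F)=0$, which gives smoothness; and (ii) compute $\dim \mathrm{Ext}^1_{\cOX}(F,F)$ via the Riemann--Roch / Euler characteristic computation $\chi(F,F)=\sum_i (-1)^i \dim\mathrm{Ext}^i_{\cOX}(F,F)$, together with the vanishing of $\mathrm{Ext}^0$ and $\mathrm{Ext}^2$, to read off $\dim\mathrm{Ext}^1 = \beta^2+1$.

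For the obstruction vanishing, the key tool is Serre duality on the surface $X$, which gives $\mathrm{Ext}^2_{\cOX}(F,F) \cong \mathrm{Hom}_{\cOX}(F, F\otimes \omega_X)^\vee$. I would then argue that any nonzero homomorphism $F\to F\otimes\omega_X$ leads to a contradiction with stability: since $F$ is a stable torsion-free rank $1$ sheaf on the integral curve $C$, and $\omega_X|_C$ has degree $K_X.\beta<0$ by hypothesis, the twist $F\otimes\omega_X$ has strictly smaller slope/reduced Hilbert polynomial than $F$. A nonzero map between stable sheaves of the same class forces an isomorphism, and a nonzero map from a stable sheaf to one of strictly smaller slope must vanish; more precisely, the negativity $K_X.\beta<0$ ensures $\mathrm{Hom}(F,F\otimes\omega_X)=0$, so $\mathrm{Ext}^2=0$. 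This is the place where the hypothesis $K_X.\beta<0$ is essential and is exactly where the del Pezzo / anticanonical positivity enters.

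For the dimension count I would write $\chi(F,F)=\mathrm{hom}(F,F)-\mathrm{ext}^1(F,F)+\mathrm{ext}^2(F,F)$, use $\mathrm{Hom}_{\cOX}(F,F)=\CC$ (as $F$ is stable, hence simple) and $\mathrm{Ext}^2_{\cOX}(F,F)=0$ from the previous step, giving $\mathrm{ext}^1(F,F)=1-\chi(F,F)$. The Euler pairing $\chi(F,F)$ for a $1$-dimensional sheaf of support class $\beta$ is computed by Riemann--Roch on $X$ and depends only on the Chern character of $F$; the standard computation yields $\chi(F,F)=-\beta^2$, whence $\mathrm{ext}^1(F,F)=\beta^2+1$, matching the claimed dimension. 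Combined with the vanishing of the obstruction space, this shows $M_\beta(X)$ is smooth of dimension $\beta^2+1$ at $[F]$.

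The main obstacle I anticipate is the stability argument underlying $\mathrm{Ext}^2=0$. One must be careful that $F$, regarded as a sheaf on $X$ supported on $C$, is actually stable with respect to the chosen polarization, and that the numerical comparison of reduced Hilbert polynomials of $F$ and $F\otimes\omega_X$ genuinely forces the vanishing of $\mathrm{Hom}(F,F\otimes\omega_X)$ rather than merely controlling it; since $C$ is integral and $F$ is torsion-free of rank $1$ on $C$, $F$ is automatically stable, which removes part of the difficulty, but the clean translation of $K_X.\beta<0$ into the strict inequality of Hilbert polynomials that kills the Hom group is the step requiring the most care.
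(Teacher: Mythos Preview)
Your proposal is correct and follows essentially the same approach as the paper: compute $\chi(F,F)=-\beta^2$ by Riemann--Roch, use simplicity of $F$ to get $\mathrm{Hom}(F,F)=\CC$, and kill $\mathrm{Ext}^2(F,F)$ via Serre duality together with stability and $K_X.\beta<0$. The paper makes explicit the step you flagged as the main obstacle, writing $\chi(F\otimes\cOX(-K_X))-\chi(F)=-K_X.\beta>0$ to obtain the strict inequality of reduced Hilbert polynomials that forces $\mathrm{Hom}(F\otimes\cOX(-K_X),F)=\mathrm{Hom}(F,F\otimes\omega_X)=0$.
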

\begin{proof}
The first order deformations of $F$ in $M_\beta(X)$ are described 
by $\mathrm{Ext}_\cOX^1(F, F)$. 
We have $\mathrm{ch}_0(F)=0$ 
and by the Riemann-Roch theorem 
one calculates
$$\sum_{i=0}^2 (-1)^i\dim \mathrm{Ext}_\cOX^i(F, F)
=-c_1(F)^2+2\mathrm{ch}_0(F)\mathrm{ch}_2(F)+\mathrm{ch}_0(F)^2\chi(\cOX)
=-c_1(F)^2=-\beta^2.$$ 
By stability of $F$ (or, rather, by the arguments for the proof of stability),
$\mathrm{Hom}_\cOX(F, F)=\mathbb{C}$.
Moreover, $\mathrm{Ext}_\cOX^2(F, F)$ is dual to 
$\mathrm{Hom}_\cOX(F\otimes_\cOX \cOX(-K_X), F)$, 
and the latter is $0$ by the inequality 
$\chi(F\otimes_\cOX \cOX(-K_X))-\chi(F)=-K_X.C>0$ 
and the stability of $F$. 
\end{proof}

\begin{Lemma}\label{lem_mmi_and_jac}
Let $C$ be an integral curve on a smooth surface $X$ 
and $F$ a torsion-free sheaf of rank 1 on $C$. 
Then locally near $[F]$, the Chow morphism $M_\beta(X)\to \mathrm{Hilb}(X)$ lifts to an isomorphism of a neighborhood of $[F]$ in
$M_\beta(X)$ to an open set in $\overline{\mathrm{Pic}}({\mathcal{C}/\mathcal{H}})$, 
where $\mathcal{H}$ is the connected component of $\mathrm{Hilb}(X)$ 
containing $[C]$ and $\mathcal{C}$ is the universal subscheme over $\mathcal{H}$. 
\end{Lemma}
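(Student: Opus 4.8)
The plan is to show that, in a neighbourhood of $[F]$, the space $M_\beta(X)$ parametrizes exactly the torsion-free rank-$1$ sheaves on integral curves lying in the Hilbert component $\mathcal{H}$, and that the correspondence ``sheaf on $X$ $\leftrightarrow$ (its support curve, the induced sheaf on that curve)'' can be carried out in families; this correspondence is precisely what identifies a neighbourhood of $[F]$ with an open subscheme of $\overline{\mathrm{Pic}}(\mathcal{C}/\mathcal{H})$. First I would record the relevant openness facts. The universal curve $\mathcal{C}\to\mathcal{H}$ is flat and proper, so the locus of geometrically integral fibres is open. Since the support (Chow) morphism is continuous, a connected neighbourhood of $[F]$ maps into $\mathcal{H}$, and for $[F']$ near $[F]$ its support $C'$ is an integral curve of class $\beta$ with $[C']\in\mathcal{H}$. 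Purity is likewise an open condition in flat families and the class $\beta$ is locally constant, so there is an open neighbourhood $V\ni[F]$ in $M_\beta(X)$ whose points are torsion-free rank-$1$ sheaves on integral curves of $\mathcal{H}$, together with an open $U\subseteq\mathcal{H}$ of integral members through which their supports factor.

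On these loci every stable $F'$ is the pushforward $\iota_{C'*}\mathcal{G}'$ of a torsion-free rank-$1$ sheaf $\mathcal{G}'$ on its integral support $C'$, and conversely any such pushforward is stable of class $\beta$: a torsion-free rank-$1$ sheaf on an integral curve admits no destabilizing subsheaf, since any torsion subsheaf vanishes and any full-support subsheaf has strictly smaller Euler characteristic, hence strictly smaller reduced Hilbert polynomial. This already yields a bijection between the points of $V$ and the points of $\overline{\mathrm{Pic}}(\mathcal{C}/\mathcal{H})$ lying over $U$.

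Next I would upgrade this to a pair of mutually inverse morphisms of schemes, working with the moduli functors, or after an \'etale localization over which universal sheaves exist (stable and rank-$1$ sheaves are simple, so the gerbe is trivial \'etale-locally). In one direction, I would push the universal sheaf of $\overline{\mathrm{Pic}}(\mathcal{C}/\mathcal{H})|_U$ forward along the closed immersion $\mathcal{C}_U\hookrightarrow X\times U$; because $\iota_*$ along a closed immersion is exact and neither creates nor destroys flatness over the base, this produces a flat family of stable sheaves of class $\beta$, hence a morphism $\Psi$ onto $V$. In the other direction, the Chow/support morphism $s\colon V\to\mathcal{H}$ of the statement should lift to a morphism $\Phi\colon V\to\overline{\mathrm{Pic}}(\mathcal{C}/\mathcal{H})|_U$ sending a family $\mathcal{F}$ on $X\times T$ to the induced family of rank-$1$ sheaves on $\mathcal{C}_T=\mathcal{C}\times_{\mathcal{H}}T$, with $s=\mathrm{pr}\circ\Phi$. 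That $\Phi$ and $\Psi$ are inverse then follows from the fibrewise equivalence above and the full faithfulness of $\iota_*$.

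The hard part will be the well-definedness of $\Phi$ at the level of families rather than of points: I must show that the scheme-theoretic support of a $T$-flat family $\mathcal{F}$ of such sheaves forms a $T$-flat family of integral divisors, so that $\mathcal{F}$ factors through $\mathcal{C}_T\hookrightarrow X\times T$ and induces a genuine morphism $T\to U$, and that $\mathcal{F}$ is then the pushforward of a $T$-flat sheaf $\mathcal{G}$ on $\mathcal{C}_T$. The subtlety is that the $0$-th Fitting support can acquire embedded or non-reduced structure exactly where $F_t$ fails to be locally free, so instead of the Fitting ideal I would use the divisorial support, which for a torsion-free rank-$1$ sheaf on an integral curve of fixed class $\beta$ recovers $C_t'$ exactly, and I would appeal to the universal property of $\mathcal{H}$ together with the $T$-flatness of $\mathcal{F}$ to produce the map $T\to U$. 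Flatness of the resulting $\mathcal{G}$ on $\mathcal{C}_T$ is then automatic, since $\iota_*$ preserves and reflects $T$-flatness. Once this families statement is established, $\Phi$ and $\Psi$ are morphisms of schemes and the asserted local isomorphism follows.
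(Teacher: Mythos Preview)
Your overall architecture---build mutually inverse morphisms $\Phi$ and $\Psi$ between $V\subseteq M_\beta(X)$ and an open in $\overline{\mathrm{Pic}}(\mathcal{C}/\mathcal{H})$---matches the paper's proof exactly. The direction $\Psi$ (pushforward along $\mathcal{C}_U\hookrightarrow X\times U$) and the pointwise bijection are fine.

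The gap is precisely at the step you flag as ``the hard part''. You need that a $T$-flat family $\mathcal{F}$ of such sheaves is annihilated by the ideal sheaf of $\mathcal{C}_T$, so that $\mathcal{F}$ is honestly a sheaf on $\mathcal{C}_T$. You propose to avoid the Fitting ideal in favour of a ``divisorial support'', but you neither define this in families nor show that its ideal annihilates $\mathcal{F}$; without that annihilation, $\mathcal{F}$ does not factor through $\mathcal{C}_T$ and $\Phi$ is not defined. Your worry that the $0$-th Fitting support can acquire embedded or non-reduced structure is misplaced in this situation: since $X$ is a smooth surface and $\mathcal{F}$ is fibrewise pure of dimension $1$ and $T$-flat, $\mathcal{F}$ locally admits a two-term resolution $0\to R^n\xrightarrow{\phi}R^n\to M\to 0$ (see \cite[Proposition~1.1.10]{HL}). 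Then $\mathrm{Fitt}_0(\mathcal{F})$ is locally principal, generated by $\det\phi$; this is automatically a relative Cartier divisor of class $\beta$, hence $T$-flat, and $\det\phi$ annihilates $M$ because $\det\phi\cdot\mathrm{id}_{R^n}=\phi^{\mathrm{adj}}\circ\phi$. This is exactly the Chow morphism's family $\mathcal{C}_T$, and the annihilation statement is what lets you regard $\mathcal{F}$ as a sheaf on $\mathcal{C}_T$. This is how the paper closes the gap; once you insert this argument for $\Phi$, the rest of your outline goes through.
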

\begin{proof}
First, take a deformation of $F$ in $M_\beta(X)$: 
Let $T$ be a scheme over $\mathbb{C}$, $0\in T$ a point, 
and $\mathcal{F}$ a coherent sheaf on $X\times T$ 
which is flat over $T$, 
such that $\mathcal{F}_0\cong F$ 
and $\mathcal{F}_t$ is stable for any geometric point $t$ of $T$. 
We may replace $T$ by a neighborhood of $0$ 
(actually, it suffices to take $T$ to be a neighborhood of $[F]$ in $M_\beta(X)$), 
and we have the Chow morphism $\varphi: T\to \mathcal{H}$. 

Let us show that, after shrinking $T$ if necessary, 
the ideal of $\mathcal{C}_T$ in $X\times T$ 
annihilates $\mathcal{F}$.  For this purpose, we recall the definition of the Fitting ideal of $\mathcal{F}$.
Let $R$ denote the local ring of $X\times T$ at a point over $0$ 
and $M$ an $R$-module corresponding to $\mathcal{F}$. 
Since $F$ is pure of dimension $1$ and $\mathcal{F}$ is flat over $T$, by \cite[Proposition 1.1.10]{HL}, $M$ has a two-step resolution 
\[
0\to R^n\overset{\phi}{\to} R^n \to M\to 0. 
\]
The Fitting ideal is locally generated by $\det\phi$, and is globally well-defined, independent of the local resolution of $\mathcal{F}$.   It is immediate to see that the Fitting ideal defines the flat family of subschemes $\mathcal{C}_T$ corresponding to the Chow morphism $T\to \mathrm{Hilb}(X)$.  Now $\det \phi$ certainly annihilates $M$, 
since on $R^n$ it can be written as the composition of $\phi$ and its adjoint. 
Hence $\mathcal{F}$ can be regarded as a family of sheaves on $\mathcal{C}_T$. 

Note that $\mathcal{F}_t$ is a torsion-free sheaf on $(\mathcal{C}_T)_t$ 
for any geometric point $t$ of $T$, since a torsion subsheaf would destabilize $\mathcal{F}_t$. 
By shrinking $T$, we may assume that $(\mathcal{C}_T)_t$ is integral for any $t$, 
and then $\mathcal{F}_t$ is of rank $1$ since its first Chern class is $\beta$. 
We therefore obtain a morphism 
$T\to \overline{\mathrm{Pic}}({\mathcal{C}/\mathcal{H}})$. 

Conversely, if $\mathcal{G}$ is a family of rank $1$ torsion-free modules 
on a family of integral curves $\mathcal{C}_T$ for some $T\to \mathcal{H}$, 
then it can be considered as a family of stable sheaves 
on $X$ over $T$. 

These correspondences are inverse to each other, 
and isomorphisms between families also coincide. 
Thus we have a local isomorphism of the moduli spaces. 
\end{proof}

We will use the following theorem on relative compactified Picard schemes. 
\begin{Theorem}[\cite{AIK77}]
\label{thm_aik}
Let $\mathcal{C}/S$ be a projective family of integral curves of arithmetic genus $p_a$ 
that can be embedded into a smooth projective family of surfaces over $S$. 
Then its relative compactified Picard scheme 
is flat over $S$ and the geometric fibers are 
integral locally complete intersections of dimension $p_a$. 
\end{Theorem}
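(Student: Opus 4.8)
The plan is to reduce the relative statement to the absolute case of a single integral curve $C$ with planar singularities: the hypothesis that $\mathcal{C}$ embeds into a smooth family of surfaces forces every geometric fiber to have only planar singularities, and hence to be Gorenstein of arithmetic genus $p_a$. I would describe the compactified Picard scheme $\overline{\mathrm{Pic}}^d(C)$ as a moduli space of rank $1$ torsion-free sheaves $F$ on $C$, and, exactly as in Lemma \ref{lem_mmi_and_jac}, regard such an $F$ as a sheaf on the ambient smooth surface so that its deformation theory is governed by $\mathrm{Ext}$-groups on a smooth surface. There are three things to establish — the local complete intersection property, flatness over $S$, and integrality of the fibers — and I would organize the argument so that the (hard) irreducibility step also supplies the dimension count.

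The complete intersection property is local on $C$. After completing at a planar singularity the local ring is the hypersurface $R=k[[x,y]]/(f)$, over which every rank $1$ torsion-free module is maximal Cohen–Macaulay and is presented by a matrix factorization of $f$ (this is precisely the mod-$f$ reduction of the two-step resolution $0\to R^n\to R^n\to M\to 0$ appearing in the proof of Lemma \ref{lem_mmi_and_jac}, since on an integral $C=V(f)$ the determinant of $\phi$ is $f$ up to a unit). Deformations of the module then amount to deformations of the matrix factorization and are pro-represented by a complete intersection of the expected codimension. Assembling these local models — equivalently, using the deformation theory of $F$ on the surface while fixing the curve and varying only the sheaf — I would present $\overline{\mathrm{Pic}}(\mathcal{C}/S)$ étale-locally as $V(g_1,\dots,g_c)$ inside a smooth $S$-scheme, with fibers of the expected codimension $c$. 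Since a relative complete intersection is flat, this presentation yields flatness over $S$ at once, and simultaneously shows that each geometric fiber is an l.c.i., hence Cohen–Macaulay and without embedded points (the $S_1$ condition).

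The main obstacle, and the real content of the cited result, is integrality. The smooth locus is the generalized Jacobian $\mathrm{Pic}^d(C)$ of honest line bundles, an irreducible torsor under $\mathrm{Pic}^0(C)$ of dimension $h^1(\mathcal{O}_C)=p_a$ (an extension of the abelian variety $\mathrm{Pic}^0(\widetilde{C})$ by an affine group, the two dimensions adding to $p_a$). The crux is to prove that this locus is \emph{dense}, i.e.\ that every rank $1$ torsion-free sheaf is a flat limit of line bundles. Planarity is essential here: working in $R=k[[x,y]]/(f)$, I would exhibit inside the versal family of such modules a deformation whose generic member is invertible, and then smooth the singularities of $F$ one at a time so as to connect $[F]$ to $\mathrm{Pic}^d(C)$ within $\overline{\mathrm{Pic}}^d(C)$. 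Density of the smooth locus delivers both irreducibility and the dimension $p_a$; combined with the $S_1$ property from the previous step and Serre's criterion ($R_0+S_1$ implies reduced), the fibers are reduced and irreducible, hence integral. I expect this density/smoothing argument at the planar singularity to be the genuinely difficult point, with the l.c.i.\ and flatness statements following formally once the local complete-intersection presentation is in hand.
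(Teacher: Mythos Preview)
The paper does not prove this theorem; it is simply quoted from \cite{AIK77} (Altman--Iarrobino--Kleiman) and used as a black box in the proof of Theorem~\ref{thm:smooth}. So there is no ``paper's own proof'' to compare against.

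That said, your outline is a reasonable sketch of the argument in the cited literature. You correctly identify that planarity of the singularities is the essential hypothesis, that the l.c.i.\ structure comes from the two-step resolution/matrix-factorization description of rank~$1$ torsion-free modules over a hypersurface ring, that flatness follows from the relative complete intersection presentation, and that the hard step is the density of the locus of invertible sheaves (equivalently, that every rank~$1$ torsion-free sheaf on a curve with planar singularities deforms to a line bundle). One small caution: the local deformation functor of a maximal Cohen--Macaulay module over a hypersurface is not automatically a complete intersection just because the module has a matrix factorization; you need to actually compute the obstruction theory or invoke the specific results of \cite{AIK77} (or Rego, D'Souza) that do this. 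Your phrasing ``deformations of the module then amount to deformations of the matrix factorization and are pro-represented by a complete intersection of the expected codimension'' is where the real work hides, and it is not as formal as you suggest.
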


We return to the setting of a smooth surface $X$ and 
a curve $D$ on $X$. 
For a curve class $\beta$, 
let us denote $D.\beta$ by $w$, 
which we assume to be positive. 
For a scheme $T$ over $\mathbb{C}$, 
we consider the following condition (*)
on a coherent sheaf $\mathcal{F}$ on $X\times T$: 
\begin{itemize}
\item[(a)]
$\mathcal{F}$ is flat over $T$, 
and for each geometric point $t$ of $T$, 
$\mathcal{F}_t$ is a torsion-free sheaf of rank $1$ 
on an integral curve $C_t$ of class $\beta$, 
not contained in $D$. 
\item[(b)]
There exists a section $\sigma: T\to D_\mathrm{sm}\times T\hookrightarrow X\times T$ 
with $\mathcal{F}|_{D\times T}\cong \shO_{w\cdot\sigma(T)}$ as $\cO_{D\times T}$-modules, 
where $w\cdot\sigma(T)$ is the closed subscheme of $D\times T$ 
defined by the $w$-th power of the ideal sheaf of $\sigma(T)\subset D\times T$. 
\end{itemize}
We will later see that $\sigma$ is unique. 

\begin{Lemma}
For a sheaf $\mathcal{F}$ satisfying condition (*), 
the following also holds: 
\begin{itemize}
\item
In a neighborhood of each point of $\sigma(T)$, the sheaf
$\mathcal{F}$ is isomorphic to 
the structure sheaf of the family of curves induced by the Chow morphism. 
\end{itemize}
Also, the conditions $\mathrm{rank}_{C_t}\mathcal{F}_t=1$ 
and $C_t\not\subseteq D$ 
follow from the rest of the conditions. 
\end{Lemma}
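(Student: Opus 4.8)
The plan is to establish two assertions about a sheaf $\mathcal{F}$ on $X\times T$ satisfying condition (*): first, a local structure statement near $\sigma(T)$, and second, that the conditions $\mathrm{rank}_{C_t}\mathcal{F}_t=1$ and $C_t\not\subseteq D$ are consequences of the remaining hypotheses. For the local structure statement, I would work fiberwise and then spread out over $T$. Fix a geometric point $t$ and a point $p\in\sigma(t)\subset D_\mathrm{sm}$. By condition (a), $\mathcal{F}_t$ is a torsion-free rank $1$ sheaf on the integral curve $C_t$, and by condition (b) its restriction to $D$ is $\mathcal{O}_{w\cdot\sigma(t)}$, which is a length-$w$ thickening of the point $p$ \emph{inside} $D$. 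The key local observation is that $C_t$ must be smooth at $p$: indeed $(C_t.D)_p = w$ forces $C_t$ to meet $D$ with the full intersection multiplicity concentrated at $p$, and the isomorphism $\mathcal{F}_t|_D\cong\mathcal{O}_{w\cdot\sigma(t)}$ together with torsion-freeness rules out the singular possibilities — a singular or non-unibranch $C_t$ at $p$ would produce a restriction to $D$ that is not of the required cyclic form.

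The heart of the argument is then to show that near $p$ the sheaf $\mathcal{F}_t$ is \emph{invertible}, hence locally free of rank one, hence locally isomorphic to $\mathcal{O}_{C_t}$, which is the structure sheaf of the Chow family fiber. I would prove this by a local computation: choose local coordinates $x,y$ at $p$ with $D=(y=0)$, so that $C_t$ is cut out by an equation whose restriction to $y=0$ generates the ideal $(x^w)$ in $\mathcal{O}_{D,p}$. Since $\mathcal{F}_t$ is torsion-free of rank $1$ on $C_t$, it is isomorphic near $p$ to an ideal in the normalization, and the constraint $\mathcal{F}_t|_D\cong\mathcal{O}_{w\cdot\sigma(t)}$ pins down the colength at $p$ to be zero, forcing $\mathcal{F}_t$ to be the trivial module $\mathcal{O}_{C_t}$ near $p$. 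Once the fiberwise statement holds, I would upgrade it to a family statement over a neighborhood of $0$ in $T$ using flatness of $\mathcal{F}$ over $T$ (condition (a)) and the Chow morphism $\varphi:T\to\mathcal{H}$ constructed as in Lemma \ref{lem_mmi_and_jac}: the Fitting-ideal argument there already exhibits $\mathcal{F}$ as a family of sheaves on $\mathcal{C}_T$, and an isomorphism with $\mathcal{O}_{\mathcal{C}_T}$ that holds on a fiber extends to a neighborhood by Nakayama together with flatness.

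For the final clause, I would argue that $\mathrm{rank}_{C_t}\mathcal{F}_t=1$ and $C_t\not\subseteq D$ are forced. The containment $C_t\subseteq D$ is impossible because then $c_1(\mathcal{F}_t)=\beta$ would be a class supported on $D$, but $\mathcal{F}_t|_D\cong\mathcal{O}_{w\cdot\sigma(t)}$ is zero-dimensional, contradicting $w=D.\beta>0$ coming from a genuine intersection; more directly, if $C_t\subseteq D$ then $\mathcal{F}|_{D\times T}$ could not be the finite-length sheaf $\mathcal{O}_{w\cdot\sigma(T)}$ since $\mathcal{F}_t$ would have one-dimensional support inside $D$. Given $C_t\not\subseteq D$, the intersection $C_t\cap D$ is finite and $\mathcal{F}_t|_D$ has finite length equal to $w\cdot\mathrm{rank}_{C_t}\mathcal{F}_t$ by a Hilbert-polynomial computation, while condition (b) gives length exactly $w$; hence the rank is $1$.

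The main obstacle I anticipate is the local structure computation showing $\mathcal{F}_t\cong\mathcal{O}_{C_t}$ near $p$, specifically controlling what the restriction of a torsion-free rank-one sheaf to $D$ looks like in terms of the colength at $p$. The subtlety is that $\mathcal{F}_t$ need not be invertible away from $p$, so I must argue purely locally at the points of $\sigma(t)$ and carefully match the length-$w$ cyclic condition on $D$ with triviality of the module; handling the possibility that $C_t$ is singular (but necessarily unibranch and smooth at $p$ under the maximal-tangency constraint) is where the care is needed, and deducing uniqueness of $\sigma$ — promised in the text — also flows from this local rigidity.
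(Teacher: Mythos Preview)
The paper's proof is a direct two-line argument that you have missed. Since $\mathcal{F}|_{D\times T}\cong\mathcal{O}_{w\cdot\sigma(T)}$ is a \emph{cyclic} module near each point of $\sigma(T)$, Nakayama's lemma immediately gives that $\mathcal{F}$ itself is locally generated by one element there, yielding a local surjection $\mathcal{O}_{X\times T}\to\mathcal{F}$. Its kernel contains the Fitting ideal (which cuts out the Chow family $\mathcal{C}_T$), and torsion-freeness of the fibers forces equality. No fiberwise analysis, no smoothness of $C_t$, no coordinate or colength computation is needed; the argument works directly over $T$.

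Your proposed route contains a genuine error: the claim that $C_t$ must be smooth at $p$ is false. Take $D=(y=0)$ in $\bA^2$ and $C_t$ the cuspidal cubic $y^2=x^3$ with $\mathcal{F}_t=\mathcal{O}_{C_t}$; then $\mathcal{F}_t|_D=\mathcal{O}_{C_t\cap D}\cong\mathcal{O}_{3P}$ is cyclic of the required form, yet $C_t$ is singular at $P$. So smoothness cannot serve as a stepping stone to invertibility of $\mathcal{F}_t$. Your alternative colength argument (``the constraint pins down the colength to zero'') is not substantiated: one would need to show that no non-invertible torsion-free rank-one module on a possibly singular $C_t$ restricts to $\mathcal{O}_{wP}$, and this is exactly what the cyclic-plus-Nakayama step accomplishes in one stroke. (Minor point: a torsion-free rank-one sheaf on $C_t$ is locally a fractional ideal of $\mathcal{O}_{C_t}$, not an ideal in the normalization.) Your treatment of the final clause---ruling out $C_t\subseteq D$ by support dimension, and forcing rank $1$ by the length of the restriction---is essentially correct.
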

\begin{proof}
From $\mathcal{F}|_{D\times T}\cong \shO_{w\cdot\sigma(T)}$ and Nakayama's Lemma, 
$\mathcal{F}$ is generated by $1$ element near any point of $\sigma(T)$, 
giving rise to a surjective homomorphism $\shO_{X\times T}\to \mathcal{F}$ locally. 
The kernel contains the ideal of the associated family of curves, 
and from the torsion-freeness, they coincide. 
\end{proof}

\begin{Definition}\label{defn:mmi}

(1)
We define a moduli functor $\mathcal{MMI}_\beta$ 
(for ``modules with maximal intersection'')
on the category of schemes over $\mathbb{C}$ 
as the sheafification of the presheaf 
\[
T\mapsto \{ \mathcal{F} \mid 
\hbox{$\mathcal{F}$ is a sheaf on $X\times T$ 
satisfying the condition (*)}\} / \cong,
\]
where $\cong$ denotes isomorphisms of coherent sheaves on $X\times T$. 

(2)
For $P\in D_\mathrm{sm}$, 
we define $\mathcal{MMI}_\beta^P$ 
as the subfunctor of $\mathcal{MMI}_\beta$ 
parameterizing families where $\sigma$ can be locally taken to be the constant section 
$T\cong P\times T\hookrightarrow D_\mathrm{sm}\times T$. 

(3)
We define a moduli functor $\mathcal{MI}_\beta$ of integral curves on $X$ of class $\beta$ 
with maximal intersection with $D$: 
For a scheme $T$ over $\mathbb{C}$, 
an element of $\mathcal{MI}_\beta(T)$ is a closed subscheme $\mathcal{Z}$ of $X\times T$, 
flat over $T$, 
with fibers integral curves of class $\beta$ 
such that the intersection of $\mathcal{Z}$ and $D\times T$ 
is $w\cdot\sigma(T)$ for a section $\sigma: T\to D_\mathrm{sm}\times T$. 

(4)
For $P\in D_\mathrm{sm}$, 
we define $\mathcal{MI}_\beta^P$ as the subfunctor of $\mathcal{MI}_\beta$ 
parameterizing families where $\sigma$ can be taken to be the constant section with value $P$. 
\end{Definition}

\begin{Lemma}\label{lem_mmi_mi}
(1)
The functor $\mathcal{MMI}_\beta$ is represented 
by a locally closed subscheme of $M_\beta(X)$, 
and $\mathcal{MMI}_\beta^P$ is represented by a closed subscheme 
of $\mathcal{MMI}_\beta$. 

(2)
The functor $\mathcal{MI}_\beta$ is represented 
by a locally closed subscheme of $\mathrm{Hilb}(X)$, 
and $\mathcal{MI}_\beta^P$ is represented by a closed subscheme 
of $\mathcal{MI}_\beta$. 

(3)
We may also regard $\mathcal{MMI}_\beta$ (resp. $\mathcal{MMI}_\beta^P$) 
as an open subscheme 
of $M_\beta(X)\times_{\mathrm{Hilb}(X)} \mathcal{MI}_\beta$ 
(resp. $M_\beta(X)\times_{\mathrm{Hilb}(X)} \mathcal{MI}_\beta^P$), 
or of the relative compactified Picard scheme over $\mathcal{MI}_\beta$ 
(resp. $\mathcal{MI}_\beta^P$). 

(4)
There exist unique morphisms $\mathcal{MMI}_\beta\to D$ and 
$\mathcal{MI}_\beta\to D$ 
representing sections $\sigma$ 
such that $\mathcal{F}|_{D\times T}\cong\cO_{w\cdot \sigma(T)}$ locally over $T$  
and $\mathcal{Z}|_{D\times T}=w\cdot\sigma(T)$. 
These morphisms commute with the Chow morphism. 

The spaces 
$\mathcal{MMI}_\beta^P$ and $\mathcal{MI}_\beta^P$ 
are the scheme theoretic inverse images 
of $P$ by these morphisms. 
\end{Lemma}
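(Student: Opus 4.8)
The plan is to prove (2) first, since the curve-level statement is the most hands-on, and then to bootstrap (1) and (3) from it by passing to the relative compactified Picard scheme over $\mathcal{MI}_\beta$ and transporting the result back to $M_\beta(X)$ via Lemma \ref{lem_mmi_and_jac}. The section morphisms of (4) are produced along the way, and the ``$P$''-versions are realized as their scheme-theoretic fibers over $P$.

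\textbf{Part (2).} I would start from the (open and closed) locus of $\mathrm{Hilb}(X)$ whose members have the Hilbert polynomial of a class-$\beta$ curve, and pass to the open subscheme $\mathcal{H}^\circ$ parametrizing integral curves of class $\beta$ not contained in $D$: integrality of the fibers is open in a flat proper family, and ``not contained in $D$'' is open as well. Over $\mathcal{H}^\circ$ the universal curve $\mathcal{C}$ meets $D\times\mathcal{H}^\circ$ properly, so $\mathcal{C}\cap(D\times\mathcal{H}^\circ)$ is finite flat of degree $w=D.\beta$ (the higher $\mathrm{Tor}$ vanish since on each fiber $C_t$ and $D$ are divisors on the smooth surface $X$ with no common component), defining a classifying morphism $\mathcal{H}^\circ\to\mathrm{Hilb}^w(D)$. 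The length-$w$ subschemes of the form $w\cdot p$ with $p\in D_\mathrm{sm}$ form a locally closed subscheme of $\mathrm{Hilb}^w(D)$, namely the image of the locally closed immersion $D_\mathrm{sm}\hookrightarrow\mathrm{Hilb}^w(D)$, $p\mapsto wp$ (a closed immersion into the open subscheme $\mathrm{Hilb}^w(D_\mathrm{sm})$). I define $\mathcal{MI}_\beta:=\mathcal{H}^\circ\times_{\mathrm{Hilb}^w(D)}D_\mathrm{sm}$, a locally closed subscheme of $\mathrm{Hilb}(X)$ representing the stated functor; the induced map $\sigma:\mathcal{MI}_\beta\to D_\mathrm{sm}\hookrightarrow D$ is the section morphism of (4), unique because $\sigma(T)$ is forced to be the reduced one-point support of $\mathcal{Z}|_{D\times T}$. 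Then $\mathcal{MI}_\beta^P:=\sigma^{-1}(P)$ is closed, and a $T$-point factors through it exactly when $\sigma$ is the constant section with value $P$, matching Definition \ref{defn:mmi}(4).

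\textbf{Parts (1), (3), (4).} Over the locally closed base $\mathcal{MI}_\beta$ I form the relative compactified Picard scheme $\overline{\mathrm{Pic}}(\mathcal{C}/\mathcal{MI}_\beta)$ of its universal family of integral, surface-embeddable curves, which is well-behaved by Theorem \ref{thm_aik}, with universal rank-$1$ torsion-free sheaf $\mathcal{L}$. I claim $\mathcal{MMI}_\beta$ is the \emph{open} locus where $\mathcal{L}$ is invertible in a neighborhood of $\sigma$. Indeed, condition (b) asks that $\mathcal{L}|_{w\sigma}\cong\shO_{w\sigma}$ locally over $T$; since $w\sigma$ is a nilpotent thickening of the section $\sigma(T)$, a line bundle on $w\sigma$ whose restriction to $\sigma(T)$ is trivial is itself trivial by Nakayama along a nilpotent ideal, so the base-line-bundle ambiguity is precisely what the sheafification in Definition \ref{defn:mmi} removes, and (b) reduces to local freeness of rank $1$ along $\sigma$, an open condition. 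This gives the second description of $\mathcal{MMI}_\beta$ in (3). By Lemma \ref{lem_mmi_and_jac}, $M_\beta(X)$ is, locally near each relevant $[F]$, isomorphic over $\mathrm{Hilb}(X)$ to $\overline{\mathrm{Pic}}(\mathcal{C}/\mathcal{H}^\circ)$; base-changing this local isomorphism along $\mathcal{MI}_\beta\hookrightarrow\mathrm{Hilb}(X)$ identifies $\overline{\mathrm{Pic}}(\mathcal{C}/\mathcal{MI}_\beta)$ locally with $M_\beta(X)\times_{\mathrm{Hilb}(X)}\mathcal{MI}_\beta$ (every sheaf with integral class-$\beta$ support is automatically rank $1$), yielding the first description in (3). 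Composing the open immersion $\mathcal{MMI}_\beta\hookrightarrow M_\beta(X)\times_{\mathrm{Hilb}(X)}\mathcal{MI}_\beta$ with the projection to $M_\beta(X)$ — a locally closed immersion, being the base change of $\mathcal{MI}_\beta\hookrightarrow\mathrm{Hilb}(X)$ — exhibits $\mathcal{MMI}_\beta$ as a locally closed subscheme of $M_\beta(X)$, proving (1). The section morphism of (4) on $\mathcal{MMI}_\beta$ is the composite $\mathcal{MMI}_\beta\to\mathcal{MI}_\beta$ (Chow) followed by $\sigma$, which commutes with the Chow morphism by construction, and $\mathcal{MMI}_\beta^P$ is its scheme-theoretic fiber over $P$, hence closed in $\mathcal{MMI}_\beta$.

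\textbf{Main obstacle.} The one genuinely non-formal point is the reduction of condition (b) to an open condition, i.e.\ that requiring $\mathcal{L}|_{w\sigma}\cong\shO_{w\sigma}$ locally over $T$ is equivalent, after sheafification, to invertibility of $\mathcal{L}$ near $\sigma$. This rests on (i) the proper-intersection $\mathrm{Tor}$-vanishing that makes $\mathcal{C}\cap(D\times-)$ flat of degree $w$, and (ii) the triviality of line bundles on the nilpotent thickening $w\sigma$ with trivial reduction along $\sigma(T)$, which is what lets the sheafification absorb the base-Picard ambiguity. I also rely on the uniqueness of $\sigma$ (flagged before Definition \ref{defn:mmi}) so that the two descriptions in (3) carry the same section morphism, and on the standard openness of the locus where a flat family of pure sheaves is locally free of rank $1$ along a given section.
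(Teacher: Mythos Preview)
Your argument is correct, but the organization differs from the paper's. You prove (2) first and then lift to (1) and (3) via the relative compactified Picard scheme over $\mathcal{MI}_\beta$, using Lemma~\ref{lem_mmi_and_jac} to transport back into $M_\beta(X)$. The paper instead attacks (1) directly: it passes to the open locus $M^\circ\subset M_\beta(X)$ where the sheaf is generated by one section near $D$ (equivalent to your ``invertible near $\sigma$''), observes that $\mathcal{F}|_{D\times T}$ is then the structure sheaf of the scheme cut out by its annihilator, and so obtains a morphism $M^\circ\to\mathrm{Hilb}^w(D)$ directly from the sheaf without first descending to the Chow scheme; $\mathcal{MMI}_\beta$ is then the preimage of the small diagonal $\Delta\subset D^{(w)}$.

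Both routes hinge on the same fact: the map $D_\mathrm{sm}\to\mathrm{Hilb}^w(D_\mathrm{sm})$, $p\mapsto wp$, is a closed immersion whose image represents the functor of ``$w$-thick sections.'' You assert this; the paper proves it by an explicit formal-coordinate calculation (writing $x^w-a_1x^{w-1}+\cdots$ and using Weierstrass preparation to show $a_i=\binom{w}{i}(a_1/w)^i$), and extracts the section morphism as $r=a_1/w$. Your fiber-product formulation $\mathcal{H}^\circ\times_{\mathrm{Hilb}^w(D)}D_\mathrm{sm}$ sidesteps the functor-of-points characterization of the image by carrying $\sigma$ as data, so you only need that $p\mapsto wp$ is an immersion, which is indeed standard. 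One small imprecision: the fiber product $M_\beta(X)\times_{\mathrm{Hilb}(X)}\mathcal{MI}_\beta$ only makes sense over the open locus of $M_\beta(X)$ where the Chow morphism is defined; this is harmless since all the sheaves in question lie there, and the paper likewise works over its $M^\circ$.

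Your approach buys a clean separation of the curve condition from the sheaf condition, so that (3) is essentially built in. The paper's approach is more self-contained for (1) --- it does not invoke Theorem~\ref{thm_aik} or the compactified Picard machinery at this stage --- and makes the formal content of the diagonal condition completely explicit.
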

\begin{proof}
First we prove 
(1) and (4) for $\mathcal{MMI}_\beta$. 
Proofs of (2) and (4) for $\mathcal{MI}_\beta$ are similar. 

In $M_\beta(X)$, the condition (a) in (*) is an open condition. 
The condition that $\mathcal{F}$ is generated by one section near $D$ 
is also open, so these conditions define an open subscheme $M^\circ$ of $M_\beta(X)$. 

Let $\mathcal{F}$ be a family in $M^\circ$ over $T$. 
In a neighborhood of $\sigma(T)$, 
$\mathcal{F}$ is isomorphic to the structure sheaf of a flat family of curves, 
which is a family of principal divisors. 
Hence, locally over $T$, the restriction $\mathcal{F}|_{D\times T}$ is isomorphic to 
the structure sheaf of a flat family 
of $0$-dimensional subschemes of $D$ of length $w$. 
This family can also be described as the one 
defined by the annihilator of $\mathcal{F}|_{D\times T}$, 
hence is determined by $\mathcal{F}$. 

Thus we have a morphism $ M^\circ\to \mathrm{Hilb}^w(D)$, 
the latter being isomorphic to the symmetric $w$-th power $D^{(w)}$ of $D$. 
We claim that subschemes of the form $wP$ are represented by 
the diagonal set $\Delta\subset D^{(w)}$ with the reduced induced structure. 
To show this, we can work with the formal neighborhood of $P$ 
since we are concerned with infinitesimal deformations of $0$-dimensional subschemes. 
In a formal coordinate $x$ on $D$, 
the Hilbert scheme can be described as 
the spectrum of $\mathbb{C}[[a_1, \dots, a_w]]$ 
with the universal subscheme $x^w-a_1x^{w-1}+\dots+(-1)^w a_w=0$. 
The diagonal set $\Delta$, as a reduced closed subscheme,  
is given by $a_i=\begin{pmatrix} w \\ i \end{pmatrix} (a_1/w)^i$ 
($i=2, \dots, w$). 
Consider a family over a complete local ring $R$, 
corresponding to 
$\varphi: \mathrm{Spec}\ R\to \mathrm{Spec}\ \mathbb{C}[[a_1, \dots, a_w]]$ given by 
$a_i=r_i$. 
If it satisfies the condition (b), 
with $\sigma$ corresponding to $x\mapsto r\in R$, 
then $x^w-r_1x^{w-1}+\dots+(-1)^w r_w$ 
is equal to $(x-r)^w$ 
(this follows from the fact that the coefficients of $x^0, x^1, \dots, x^{w-1}$ are 
the coordinates of the representing space, 
or more concretely, by writing $x^w-r_1x^{w-1}+\dots+(-1)^w r_w=(\hbox{unit})(x-r)^w$ 
and using Weierstrass preparation theorem). 
This means that $r_i=\begin{pmatrix} w \\ i \end{pmatrix} r^i$ ($i=1, \dots, w$) 
and $\varphi$ factors through $\Delta$. 

Thus $\mathcal{MMI}_\beta$ is the scheme theoretic inverse image of $\Delta$. 

In the calculation above, $r$ is determined by $(r_1, \dots, r_w)$: 
Specifically, $r=r_1/w$. 
This shows the existence and uniqueness 
of $\mathcal{MMI}_\beta\to D$ as in (4), 
and $\mathcal{MMI}_\beta^P=\mathcal{MMI}_\beta\times_D P$ follows from the definition 
of $\mathcal{MMI}_\beta^P$. 

(3) 
follows from the description above of families in $\mathcal{MMI}_\beta$ 
as families in $M^\circ$ whose support curves have maximal intersection with $D$. 
\end{proof}

\begin{Lemma}\label{lem_mi_is_reduced}
Assume that $X$ is a regular surface. 
\begin{enumerate}
\item
The space $\mathcal{MI}_\beta^P$ can be identified 
with $\shooo$ considered as a nonsingular variety. 
\item
Assume furthermore that 
the Abel map of $D$ is immersive at $P$ 
and $C$ belongs to $\mathcal{MI}_\beta^P$. 
Then, in a neighborhood of $[C]$, 
the morphism $\mathcal{MI}_\beta\to D$ representing the intersection point 
is scheme-theoretically the constant map with value $P$. 

Consequently, 
if the Abel map of $D$ is immersive at each $P\in D(\beta)$, 
\[
\mathcal{MI}_\beta=\coprod_{P\in D(\beta)}\mathcal{MI}_\beta^P
\]
and
\[
\mathcal{MMI}_\beta=\coprod_{P\in D(\beta)}\mathcal{MMI}_\beta^P 
\]
scheme theoretically, and 
$\mathcal{MI}_\beta$ can be identified with $\coprod_{P\in D(\beta)} \shooo$ 
and $\mathcal{MMI}_\beta$ can be considered as an open subscheme 
of the relative compactified Picard scheme over 
$\coprod_{P\in D(\beta)} \shooo$. 
\item
If $h^0(\cO_D)=1$ and 
$P\in D_\mathrm{sm}$, 
the following are equivalent: 
\begin{enumerate}
\item[(a)]
The Abel map of $D$ is immersive at $P$. 
\item[(b)]
$h^0(D, \cO_D(P))=1$. 
\item[(c)]
$\omega_D$ has a global section nonzero at $P$. 
\end{enumerate}
In particular, the Abel map of $D$ is immersive at $P$ 
if either $h^0(\cO_D)=1$ and the component $D_0$ of $D$ containing $P$ 
satisfies $p_a(D_0)>0$, 
or $K_X+D\sim 0$. 
\end{enumerate}
\end{Lemma}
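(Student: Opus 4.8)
The plan is to treat the three parts in turn, using Lemma~\ref{lem_hilb} and Lemma~\ref{lem_mmi_mi} as the scheme-theoretic backbone and reserving genuine work for the passage from set-theoretic to scheme-theoretic statements. For part (1), I would first record that, since $X$ is regular, Lemma~\ref{lem_hilb} identifies the component of $\mathrm{Hilb}(X)$ through any member of $|\beta|$ with the smooth projective linear system $|\beta|$. Inside it, $\sho$ is carved out by the \emph{linear} conditions expressing that a defining section $s$ of $\cO_X(\beta)$ restricts to a section of $\cO_D(\beta|_D)$ vanishing to order $\ge w$ at $P$; being linear, these cut out a reduced projective-linear subspace, so $\sho$ and its open subsets $\shoo,\shooo$ are nonsingular. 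The identification $\mathcal{MI}_\beta^P\cong\shooo$ then comes from matching functors: a family in $\mathcal{MI}_\beta^P(T)$ has $\mathcal Z\cap(D\times T)=w\cdot(P\times T)$, which for the defining section is precisely the order-$w$ vanishing along $P\times T$, giving a map $T\to\sho$ landing in $\shooo$; conversely the universal curve over $\shooo$ satisfies $\mathcal C\cap(D\times\shooo)\supseteq w\cdot(P\times\shooo)$ by construction, and since $\deg(\mathcal C_t\cap D)=w$ this containment of degree-$w$ cycles is an equality, so the universal family lies in $\mathcal{MI}_\beta^P(\shooo)$. These assignments are mutually inverse, so the two schemes agree and $\mathcal{MI}_\beta^P$ is reduced and nonsingular.

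Part (2) is the core. The key observation is that the composite $\mathcal{MI}_\beta\xrightarrow{\sigma}D\xrightarrow{Q\mapsto\cO_D(wQ)}\Pic^w(D)$ is scheme-theoretically \emph{constant}, equal to $\beta|_D$: using regularity of $X$ and the seesaw principle, the line bundle $\cO_{X\times T}(\mathcal Z)|_{D\times T}$ differs from $p_D^*(\beta|_D)$ only by a twist pulled back from $T$, so it defines the constant map to $\Pic^w(D)$, while by definition this same class is $\cO_{D\times T}(w\sigma(T))$. Next I factor $Q\mapsto\cO_D(wQ)$ as $m_w\circ A$, where $A$ is the Abel map and $m_w$ is induced by multiplication by $w$ on $\Pic^0(D)$; since we work in characteristic $0$, $[w]$ has differential $w\cdot\mathrm{id}$ on the Lie algebra and is therefore étale, so immersivity of $A$ at $P$ forces $Q\mapsto\cO_D(wQ)$ to be immersive at $P$. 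For an immersive map of a smooth curve into a smooth variety, the \emph{scheme-theoretic} fiber through $P$ is the reduced point $\{P\}$ (one pulled-back local coordinate generates $\mathfrak{m}_P$); since $\sigma$ factors through this fiber over the constant value $\beta|_D$, it is scheme-theoretically the constant map $P$ near $[C]$. The consequence then follows formally: under immersivity at each point of $D(\beta)$ the map $Q\mapsto\cO_D(wQ)$ is quasi-finite, so $D(\beta)$ is finite, $\sigma$ is scheme-theoretically locally constant, its fibers $\mathcal{MI}_\beta^P=\mathcal{MI}_\beta\times_D P$ are open and closed, and $\mathcal{MI}_\beta=\coprod_P\mathcal{MI}_\beta^P$; the identification with $\coprod_P\shooo$ is part (1), and pulling the decomposition back along the Chow-compatible maps $\mathcal{MMI}_\beta\to\mathcal{MI}_\beta\to D$ of Lemma~\ref{lem_mmi_mi}(3),(4) yields the statements for $\mathcal{MMI}_\beta$.

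For part (3), I would compute $dA_P\colon T_PD\to H^1(\cO_D)$ as the connecting map of $0\to\cO_D\to\cO_D(P)\to N_{P/D}\to 0$, and identify it by Serre duality on the Gorenstein curve $D$ with the dual of the evaluation $H^0(\omega_D)\to\omega_D|_P$. Since $T_PD$ is one-dimensional, $dA_P$ is injective iff it is nonzero iff this evaluation is nonzero, giving (a)$\Leftrightarrow$(c) with no hypothesis on $H^0(\cO_D)$; reading the long exact sequence under $h^0(\cO_D)=1$ shows the connecting map is injective iff $h^0(\cO_D(P))=1$, giving (a)$\Leftrightarrow$(b). For the final assertions: when $K_X+D\sim0$, adjunction gives $\omega_D\cong\cO_D$, whose constant section is nonzero at $P$, so (c) holds; and when $h^0(\cO_D)=1$ with $p_a(D_0)>0$ I would verify (b) by restricting a putative nonconstant section of $\cO_D(P)$ to the integral component $D_0$, where a degree-one bundle on a curve of positive arithmetic genus has only constant sections (checked on the normalization, the positivity of the $\delta$-invariant of $D_0$ forcing the linear term to vanish at each singularity), and then using connectedness ($h^0(\cO_D)=1$) to glue back to a global constant.

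I expect the main obstacle to be the scheme-theoretic — as opposed to merely set-theoretic — constancy in part (2): showing the contact point cannot move to first order is easy, but upgrading this to a neighborhood and to a non-reduced statement is exactly what the disjoint-union decomposition requires, and it relies essentially on the étaleness of $[w]$ in characteristic zero to guarantee that the fiber of $Q\mapsto\cO_D(wQ)$ is reduced. The reducible-curve case of part (3) is the other delicate point, since there one must combine the positive-genus estimate on $D_0$ with the global connectedness of $D$.
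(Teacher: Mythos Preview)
Your proposal is correct and follows essentially the same approach as the paper: part~(1) via Lemma~\ref{lem_hilb} and linearity of the tangency conditions, part~(2) via constancy of $[w]\circ A\circ\sigma$ combined with \'etaleness of $[w]$ and immersivity of $A$, and part~(3) via the long exact sequence of $0\to\cO_D\to\cO_D(P)\to\cO_P(P)\to 0$ and Serre duality. The only cosmetic differences are that the paper obtains the constancy in~(2) by an explicit local lift $T\to H^0(\cO_X(C))\setminus\{0\}$ rather than invoking seesaw, and in~(3) it shows $h^0(D_0,\cO_{D_0}(P))=1$ by observing that a pencil $|P|$ would force $D_0\cong\PP^1$, which is a little cleaner than your normalization argument.
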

If $D$ is connected and reduced, we can show that 
the conditions of (3) are equivalent to saying 
that the $D_0$ is not a loosely connected rational tail of $D$
(cf. Step II of the proof of \cite[Theorem D]{Catanese1982}). 
For more about the immersivity of the Abel map of reduced Gorenstein curves we refer to \cite{Catanese1982} and \cite{CCE08}.
\begin{proof}
(1)
Set-theoretically, this is obvious. 
Let $[C]$ be a point of $\mathcal{MI}_\beta^P$. 
By Lemma \ref{lem_hilb}, 
the component of $\mathrm{Hilb}(X)$ containing $C$ 
can be identified with $|C|$. 
Taking a basis $\varphi_0, \dots, \varphi_d$ of $H^0(\cOX(C))$ with $\varphi_0$ 
corresponding to $C$, 
$|C|$ has natural local coordinates $(s_1, \dots, s_d)$ near the point $[C]$ and 
$\mathcal{MI}_\beta^P$ is defined by the vanishing 
of $\varphi_0+s_1\varphi_1+\dots+s_d\varphi_d$ on $wP$ as a section of $\cO_{wP}(C)$. 
This gives linear equations on $s_1, \dots, s_d$, 
and $\mathcal{MI}_\beta^P$ can be scheme-theoretically identified with $\shooo$. 

(2)
Let $[C]$ be a point of $\mathcal{MI}_\beta$ with $P=C\cap D$ and 
$\mathcal{Z}$ the family corresponding to a small neighborhood $T$ of $[C]$. 
By Lemma \ref{lem_hilb}, 
there is a morphism $T\to \PP(H^0(\mathcal{O}_X(C)))$ 
for which $\mathcal{Z}$ is the pullback of the universal curve $\mathcal{C}$. 
Taking the pullback of the universal curve 
by $U:=H^0(\mathcal{O}_X(C))\setminus \{0\}\to \PP(H^0(\mathcal{O}_X(C)))$, 
we have a universal section of $\mathcal{O}_{X\times U}(C\times U)$ defining the family of curves $\mathcal{C}_U$ 
and hence an isomorphism 
$\mathcal{O}_{X\times U}(\mathcal{C}_U-(C\times U))\cong\mathcal{O}_{X\times U}$. 
By taking the pullback by a local lift $T\to U$ and restricting, 
we have 
$\mathcal{O}_{D\times T}(w\cdot\sigma(T)-w(P\times T))\cong \mathcal{O}_{D\times T}$, 
where $\sigma$ is as in (b) of (*). 

Let $u: D_\mathrm{sm}\to \mathrm{Pic}(D)$ 
be the ``Abel morphism'', defined roughly by $Q\mapsto \cO_D(Q)$, 
and $[w]: \mathrm{Pic}(D)\to \mathrm{Pic}(D)$ the multiplication-by-$w$ morphism. 
Then the above isomorphism shows that 
$[w]\circ u\circ\sigma$ is a constant map, 
where we regard $\sigma$ as a morphism $T\to D$. 

From the assumption that $u$ is immersive at $P$ and the \'etaleness of $[w]$, 
we see that $\sigma$ is the constant map with value $P$. 
Thus the family $\mathcal{Z}\to T$ belongs to $\mathcal{MI}_\beta^P$. 
Since $\mathcal{MMI}_\beta\to D$ factors through $\mathcal{MI}_\beta$, 
the assertion on $\mathcal{MMI}_\beta$ also holds. 

(3)
The exact sequence $0\to \cO_D\to\cO_D(P)\to\cO_P(P)\to 0$ induces 
$$0\to H^0(\cO_D)\overset{f}{\to} H^0(\cO_D(P))\to T_PD
\overset{d_P u}{\to} T_{[\cO_D(P)]}\mathrm{Pic}(D),$$
and so (a) and (b) are equivalent. 

From the exact sequence $0\to \omega_D(-P)\to \omega_D\to \omega_D|_P\to 0$ 
we have a long exact sequence 
$$H^0(\omega_D)\to H^0(\omega_D|_P)\to 
H^1(\omega_D(-P))\overset{g}{\to} H^1(\omega_D),$$ 
and $g$ is the Serre dual to $f$. 
Thus (c) is also equivalent. 

If $p_a(D_0)>0$, 
then $|P|$ consists of one point since otherwise it would give an isomorphism $D\cong \mathbb{P}^1$. 
Thus we have $H^0(D_0, \cO_{D_0}(P))=\CC=H^0(D_0, \cO_{D_0})$, 
hence (b) holds. 

If $K_X+D\sim 0$, then (c) is obvious. 
For an anticanonical curve $D(\not=0)$ on a rational surface $X$ 
the fact that $h^0(\cO_D)=1$ (and $h^1(\cO_D)=1$) is standard: 
This follows from the long exact sequence associated to 
$0\to \cOX(-D)\cong\cOX(K_X)\to \cOX\to \cO_D\to 0$, 
using Serre duality. 
\end{proof}

Recall that, if a class in $\mathrm{Ext}_\cOX^1(F, F)$ is 
represented by an extension $0\to F\overset{\alpha}{\to} \tilde{F}\overset{\beta}{\to} F\to 0$, 
then the corresponding deformation over $\mathbb{C}[\varepsilon]/\varepsilon^2$ 
is given by $\tilde{F}$ with the action of $\varepsilon$ on $\tilde{F}$ 
defined as $\alpha\circ\beta$.

\begin{Lemma}\label{lem_local_def}
Let $P\in D_\mathrm{sm}$ be a point, 
$x$ a local parameter on $D$ at $P$, 
and $F= \mathcal{O}_{D, P}/(x^w)$. 
Then a first order deformation of $F$ as a coherent sheaf on $D$ is given by 
($\mathcal{O}_{D, P}\otimes \mathbb{C}[\varepsilon])/(x^w-g(x)\varepsilon, \varepsilon^2)$ 
for a unique polynomial $g\in\mathbb{C}[X]$ of degree $\leq w-1$. 
The corresponding extension is isomorphic to 
\[
0\to 
\cO_{D, P}/(x^w) 
\overset{\alpha}{\to}
(\cO_{D, P}\otimes \mathbb{C}[\varepsilon])/(x^w-g(x)\varepsilon, \varepsilon^2)
\overset{\beta}{\to}
\cO_{D, P}/(x^w) 
\to 0, 
\]
where $\alpha(\bar{f})=\overline{f\varepsilon}$ 
and $\beta$ is the reduction modulo $\varepsilon$. 
\end{Lemma}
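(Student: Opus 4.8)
The plan is to work locally at $P$. Since $D$ is smooth at $P$, the ring $R:=\cO_{D,P}$ is a discrete valuation ring with uniformizer $x$, and $F=R/(x^w)$ is an Artinian cyclic $R$-module supported at $P$. A first order deformation of $F$ as a coherent sheaf on $D$ is the same datum as a first order deformation of the $R$-module $F$, i.e.\ an $R[\varepsilon]/(\varepsilon^2)$-module $\tilde{F}$ that is flat over $\mathbb{C}[\varepsilon]/(\varepsilon^2)$ together with an identification $\tilde{F}\otimes_{\mathbb{C}[\varepsilon]}\mathbb{C}\cong F$. Such deformations are classified by $\mathrm{Ext}^1_R(F,F)$, so I would first compute this group and then produce the explicit representative.

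For the computation I would use the free resolution $0\to R\overset{x^w}{\to} R\to F\to 0$. Applying $\mathrm{Hom}_R(-,F)$ and noting that multiplication by $x^w$ acts as $0$ on $F$, one reads off
\[
\mathrm{Ext}^1_R(F,F)\cong \mathrm{coker}\bigl(F\overset{0}{\to} F\bigr)=F\cong\mathbb{C}[x]/(x^w).
\]
This group is $w$-dimensional with basis $1,x,\dots,x^{w-1}$, which already accounts for the uniqueness of $g$: a class is represented by a unique polynomial $g\in\mathbb{C}[X]$ of degree $\le w-1$.

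To exhibit the deformation attached to $g$ explicitly, I would show that every first order deformation has the claimed form. Since $F$ is cyclic, Nakayama's lemma (applied over the local ring $R[\varepsilon]/(\varepsilon^2)$) shows that $\tilde{F}$ is cyclic, generated by any lift $\tilde{1}$ of the generator $\bar{1}\in F$; thus $\tilde{F}\cong R[\varepsilon]/J$ for an ideal $J$ lifting $(x^w)$. The reduction $\tilde{F}/\varepsilon\tilde{F}=F$ gives $x^w\tilde{1}\in\varepsilon\tilde{F}$, and flatness over $\mathbb{C}[\varepsilon]/(\varepsilon^2)$ yields $\varepsilon\tilde{F}\cong F$, so $x^w\tilde{1}=g(x)\varepsilon\tilde{1}$ for a unique $g\in R/(x^w)$, giving $\tilde{F}\cong(\cO_{D,P}\otimes\mathbb{C}[\varepsilon])/(x^w-g(x)\varepsilon,\varepsilon^2)$. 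Conversely, this module is readily checked to be free of rank $w$ over $\mathbb{C}[\varepsilon]/(\varepsilon^2)$, hence flat, and to reduce to $F$ modulo $\varepsilon$. To identify the displayed extension I would then verify directly that $\alpha$ is well-defined and injective (using $x^w\varepsilon=g\varepsilon^2=0$ in $\tilde{F}$) with image $\varepsilon\tilde{F}=\ker\beta$, so that the sequence is exact; the description of extension classes recalled just before the lemma then shows that $\varepsilon$ acts on $\tilde{F}$ as $\alpha\circ\beta$, confirming that this extension represents the class $g$.

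The main obstacle is the bookkeeping in the third step: matching the parameter $g$ in the explicit presentation $R[\varepsilon]/(x^w-g\varepsilon,\varepsilon^2)$ with the class $g\in\mathrm{Ext}^1_R(F,F)$ under the above identification, and simultaneously checking that flatness over the dual numbers pins down $g$ only modulo $(x^w)$, whence the normalization $\deg g\le w-1$. Everything else is a routine verification once the discrete-valuation-ring structure of $\cO_{D,P}$ is in hand.
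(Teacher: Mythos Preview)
Your argument is correct, but the paper takes a different and much shorter route. Rather than computing $\mathrm{Ext}^1_R(F,F)$ directly and invoking Nakayama to produce the cyclic presentation, the paper observes (as in the proof of Lemma~\ref{lem_mmi_mi}) that a small deformation of $F$ is equivalent to a deformation of its supporting subscheme $wP\subset D$, so one may work with $\mathbb{C}[[x]]$ and read everything off from the description of $\mathrm{Hilb}^w$ of a smooth curve as the $w$-th symmetric power: the universal subscheme near $wP$ is $x^w-a_1x^{w-1}+\cdots+(-1)^wa_w=0$, and pulling back along $\Spec\,\mathbb{C}[\varepsilon]/(\varepsilon^2)\to\mathrm{Hilb}^w$ gives exactly the displayed module.

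What you gain is a self-contained module-theoretic proof that does not rely on the sheaf-to-subscheme identification, and your explicit check that the short exact sequence realizes the extension class is a useful addition. What the paper's approach buys is brevity and a direct link to the coordinate description of $\mathrm{Hilb}^w(D)$ already set up earlier, which is reused in the subsequent lemma when analyzing the tangent spaces of $\mathcal{MMI}_\beta$ and $\mathcal{MMI}_\beta^P$.
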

\begin{proof}
As in the proof of Lemma \ref{lem_mmi_mi}, 
a small deformation of $F$ is equivalent to the deformation of 
the supporting scheme, 
and we may replace $\cO_{D, P}$ by $\mathbb{C}[[x]]$. 
Then the assertion follows from the description of Hilbert schemes of points on a smooth curve 
as symmetric powers. 
\end{proof}

\begin{Remark}
In the following, we describe the tangent spaces of $\mc{MMI}_\beta$ and $\mc{MMI}_\beta^P$ 
as the images of the natural maps 
$\mathrm{Ext}_\cOX^1(F, F(-(w-1)P))\to \mathrm{Ext}_\cOX^1(F, F)$ 
and $\mathrm{Ext}_\cOX^1(F, F(-D))\to \mathrm{Ext}_\cOX^1(F, F)$, respectively. 
In the former, we allow the intersection point to move along $D$. 
In the case of our main concern, the intersection point does not move in $D$ 
by Lemma \ref{lem_mi_is_reduced}, 
and the tangent spaces coincide. 
We use $\mathrm{Ext}_\cOX^1(F, F(-D))$ to prove our main result here, 
but a similar proof using $\mathrm{Ext}_\cOX^1(F, F(-(w-1)P))$ 
is also possible. 

\end{Remark}

\begin{Lemma}
Let $[F]$ be a point of $\mathcal{MMI}_\beta^P$ 
for $P\in D_\mathrm{sm}$. 

\begin{enumerate}
\item The tangent space of $\mathcal{MMI}_\beta^P$ at $[F]$ 
is naturally isomorphic to the image of 
the natural map $\mathrm{Ext}_\cOX^1(F, F(-D))\to \mathrm{Ext}_\cOX^1(F, F)$. 
\item
Define $F(-(w-1)P):=\mathrm{Ker} (F\to F|_{(w-1)P})$, 
where $(w-1)P$ is the closed subscheme of $D$ 
defined by $(\mathcal{I}_{P\subset D})^{w-1}$. 

Then the tangent space of $\mathcal{MMI}_\beta$ at $[F]$ 
is naturally isomorphic to the image of 
the natural map $$\mathrm{Ext}_\cOX^1(F, F(-(w-1)P))\to \mathrm{Ext}_\cOX^1(F, F).$$
Note that, if the supporting curve $C$ of $F$ is smooth at $P$, 
$F(-(w-1)P)$ can also be described 
as $F\otimes_{\mathcal{O}_C}\mathcal{O}_C(-(w-1)P)$. 
\end{enumerate}
\end{Lemma}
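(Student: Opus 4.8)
The plan is to read off both tangent spaces from the description in Lemma \ref{lem_mmi_mi} of $\mc{MMI}_\beta$ and $\mc{MMI}_\beta^P$ as scheme-theoretic inverse images under the morphism $\mathrm{res}\colon M^\circ\to\mathrm{Hilb}^w(D)=D^{(w)}$, $[F]\mapsto[C\cap D]$, constructed there, where $M^\circ\subseteq M_\beta(X)$ is the open subscheme with tangent space $\mathrm{Ext}_\cOX^1(F,F)$ at $[F]$. Concretely $\mc{MMI}_\beta^P=\mathrm{res}^{-1}(wP)$ and $\mc{MMI}_\beta=\mathrm{res}^{-1}(\Delta)$ for the diagonal $\Delta\subset D^{(w)}$, so a tangent vector is a class $\xi$ (a first order deformation $\tilde F$ over $\CC[\varepsilon]/\varepsilon^2$); condition (a) of (*) is open and imposes nothing infinitesimally. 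By Lemma \ref{lem_local_def} the target $T_{wP}D^{(w)}$ is the space of polynomials $g$ of degree $\le w-1$, and I write $\rho=d_{[F]}\mathrm{res}$, induced by $-\otimes_\cOX\cO_D$, for the differential, which sends $\xi$ to the polynomial $g$ governing the deformation of $F|_D\cong\shO_{wP}$. Since the tangent line to $\Delta$ at $wP$ is $\CC\cdot x^{w-1}$ (all $w$ points move together, $(x-\varepsilon c)^w=x^w-wc\,\varepsilon\,x^{w-1}+O(\varepsilon^2)$), I obtain $T_{[F]}\mc{MMI}_\beta^P=\ker\rho$ and $T_{[F]}\mc{MMI}_\beta=\rho^{-1}(\CC\cdot x^{w-1})$.

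It remains to match $\ker\rho$ and $\rho^{-1}(\CC\cdot x^{w-1})$ with the two $\mathrm{Ext}$ images. Applying $\mathrm{Hom}_\cOX(F,-)$ to $0\to F(-D)\to F\to F|_D\to 0$ and to $0\to F(-(w-1)P)\to F\to F|_{(w-1)P}\to 0$ gives exact sequences
\[
\mathrm{Ext}_\cOX^1(F,F(-Z))\xrightarrow{j_Z}\mathrm{Ext}_\cOX^1(F,F)\xrightarrow{\psi_Z}\mathrm{Ext}_\cOX^1(F,F|_Z)
\]
for $Z=D$ and $Z=(w-1)P$, so $\mathrm{Im}(j_Z)=\ker\psi_Z$, with $\psi_Z$ post-composition with the quotient $q_Z\colon F\to F|_Z$. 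For $Z=D$ I would prove $\ker\rho=\mathrm{Im}(j_D)$ by two inclusions: tensoring the defining sequence with $\cO_D$ and using $F|_D\otimes_\cOX\cO_D\cong F|_D$ forces the restricted inclusion $F(-D)\otimes\cO_D\to F|_D$ to vanish, giving $\mathrm{Im}(j_D)\subseteq\ker\rho$; and the standard ``pushout equals pullback'' identity for the canonical map of extensions from $\xi$ to its restriction yields $\psi_D=q_D^{*}\circ\rho$, hence $\ker\rho\subseteq\ker\psi_D=\mathrm{Im}(j_D)$. This proves (1).

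For (2) the difficulty is that restriction to the fat point $(w-1)P$ is not flat ($\mathrm{Tor}_1^\cOX(F,\cO_{(w-1)P})\neq0$), so there is no honest ``restriction to $(w-1)P$'' map to tensor with. Instead I would factor $q_{(w-1)P}$ through $F|_D$ via the $\cO_D$-linear surjection $\pi\colon F|_D\to F|_{(w-1)P}=F|_D/x^{w-1}F|_D$, so that $\psi_{(w-1)P}=\pi_{*}\circ\psi_D$, and exploit part (1): modulo $\ker\psi_D=\ker\rho$ the class $\psi_D(\xi)$ records exactly $g=\rho(\xi)$. The remaining input is that, on the level of $g$, the map $\pi_{*}$ is the truncation $g\mapsto g\bmod x^{w-1}$, whose kernel inside $\{\deg\le w-1\}$ is precisely $\CC\cdot x^{w-1}$; this is a direct computation in the local model $F|_D=\CC[[x]]/(x^w)$, $F|_{(w-1)P}=\CC[[x]]/(x^{w-1})$ of Lemma \ref{lem_local_def}. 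Combining, $\mathrm{Im}(j_{(w-1)P})=\ker\psi_{(w-1)P}=\rho^{-1}(\CC\cdot x^{w-1})=T_{[F]}\mc{MMI}_\beta$. The parenthetical claim that $F(-(w-1)P)=F\otimes_{\cO_C}\cO_C(-(w-1)P)$ when $C$ is smooth at $P$ is then immediate, since there $x|_C$ is a local parameter and $C\cap D=wP$.

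The main obstacle I anticipate is the bookkeeping of the previous paragraph: making rigorous that $\psi_D(\xi)$ ``records $g$'' and that $\pi_{*}$ is truncation, because $\mathrm{Ext}_\cOX^1(F,F|_D)$ and $\mathrm{Ext}_\cOX^1(F,F|_{(w-1)P})$ are genuinely larger than the deformation spaces of the restricted sheaves, so one must track the images of $\psi_D,\psi_{(w-1)P}$ rather than the full groups. I expect the cleanest remedy is to carry out the whole identification $\cO_D$-locally near $P$, where $F|_D$ and $F|_{(w-1)P}$ are supported: there all of $j_Z,\psi_Z,\rho$ become the explicit finite-dimensional maps of Lemma \ref{lem_local_def}, and since $F|_D$ is a skyscraper at $P$ the passage back to the global groups changes nothing.
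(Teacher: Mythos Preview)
Your strategy is sound and gives a correct proof once two small points are made explicit, but it is organized quite differently from the paper's argument.

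\textbf{Comparison.} The paper never introduces the differential $\rho$ or the factorization $\psi_D=q_D^*\circ\rho$. Instead it works directly with extensions: given a tangent vector $0\to F\to\tilde F\to F\to 0$ in $\mc{MMI}_\beta^P$, it restricts to $D$, uses that the resulting extension of $F|_D$ by $F|_D$ is split, chooses a splitting, and pulls back to produce a sub-extension $0\to F(-D)\to\tilde F'\to F\to 0$ exhibiting the class as coming from $\mathrm{Ext}^1(F,F(-D))$; the converse is a pushout. Part (2) is handled the same way, with the explicit local model of Lemma \ref{lem_local_def} replacing the splitting by the submodule $\tilde G$ generated by $1,x,\dots,x^{w-1},x^{w-1}\varepsilon$. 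Your route via the long exact sequence and the pushout--pullback identity is more systematic and avoids naming $\tilde F'$, at the cost of a local computation you only sketch; the paper's route is more explicit and hands-on.

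\textbf{Two points to tighten.} First, your pushout--pullback identity (and the claim that restricting an extension of $F$ by $F$ to $D$ again yields a short exact sequence) needs $\mathrm{Tor}_1^{\cO_X}(F,\cO_D)=0$. You implicitly contrast this with the failure for $(w-1)P$, but you should state and check it; the paper does so at the outset (it follows from injectivity of $F(-D)\to F$). Second, your part (2) reduction ``$\pi_*$ is truncation mod $x^{w-1}$'' is the whole content and deserves to be written out. The clean way, as you anticipate, is to localize at $P$: with $D=(y=0)$ and $F\cong\cO_X/(h)$ near $P$ (since $F$ is invertible there), the resolution $0\to\cO_X\xrightarrow{h}\cO_X\to F\to 0$ identifies $\mathrm{Ext}^1_{\cO_X}(F,F|_D)\cong F|_D\cong\CC[x]/(x^w)$ and $\mathrm{Ext}^1_{\cO_X}(F,F|_{(w-1)P})\cong\CC[x]/(x^{w-1})$, with $\psi_D(\xi)$ equal to the polynomial $g$ of Lemma \ref{lem_local_def} and $\pi_*$ the quotient map; then $\ker\psi_{(w-1)P}=\rho^{-1}(\CC\cdot x^{w-1})$ on the nose. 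With these two additions your argument is complete.
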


\begin{proof}
We begin by observing that $\mathrm{Tor}_1^{\mathcal{O}_X}(F,\mathcal{O}_D)=0$.  This follows from the exact sequence
\[
0\to \mathrm{Tor}_1^{\mathcal{O}_X}(F,\mathcal{O}_D)\to F(-D)\to F,
\]
since the final map is immediately seen to be injective by the local form of $F$ near $D$.  We will use this vanishing without comment in the remainder of the proof to conclude that  short exact sequences ending in $F$ remain exact after restriction to $D$.

(1)
Take a tangent vector of $\mathcal{MMI}_\beta^P$ at $[F]$ 
and let $0\to F\to \tilde{F}\to F\to 0$ 
be the corresponding extension. 

The restriction 
$0\to F|_D\to \tilde{F}|_D\to F|_D\to 0$ is a split extension, 
so let $\tilde{G}\subset \tilde{F}|_D$ be the image of a splitting. 
Then we have a commutative diagram with exact rows and columns: 
\[
\xymatrix{
 &  & 0 \ar[d] &  &  \\
 &  & \tilde{G} \ar[d]\ar[r]^\sim & F|_D \ar@{=}[d] & \\
0 \ar[r] & F|_D \ar@{=}[d]\ar[r] & \tilde{F}|_D \ar[d]\ar[r] & F|_D \ar[r] & 0 \\
 & F|_D \ar[r]^\sim & (\tilde{F}|_D)/\tilde{G} \ar[d] &  &  \\
 &  & 0. &  & 
}
\]
The sheaf $F(-D)$ is the kernel of $F\to F|_D$, 
so if we write $\tilde{F}'$ for the inverse image of $\tilde{G}$ in $\tilde{F}$, 
we have a commutative diagram with exact rows: 
\[
\xymatrix{
0 \ar[r] & F(-D) \ar[r]\ar[d] & \tilde{F}' \ar[r]\ar[d] & F \ar[r]\ar[d] & 0 \\
0 \ar[r] & F \ar[r] & \tilde{F} \ar[r] & F \ar[r] & 0, 
}
\]
where the top row is the kernel of the natural surjection from the bottom row to the bottow row of the preceding commutative diagram.
Thus our extension comes from a class in $\mathrm{Ext}_\cOX^1(F, F(-D))$. 

Conversely, if we are given an element of $\mathrm{Ext}_\cOX^1(F, F(-D))$, 
let 
$0 \to F(-D) \to \tilde{F}' \to F \to 0$ be 
the corresponding extension. 
By push-out, 
we obtain a sheaf $\tilde{F}$ and a commutative diagram with exact rows as above, 
where the lower row represents the induced class in $\mathrm{Ext}_\cOX^1(F, F)$. 
By restricting to $D$, we have a commutative diagram with exact rows: 
\[
\xymatrix{
0 \ar[r] & F(-D)|_D \ar[r]\ar[d]_i & \tilde{F}'|_D \ar[r]\ar[d]_j & F|_D \ar[r]\ar[d] & 0 \\
0 \ar[r] & F|_D \ar[r] & \tilde{F}|_D \ar[r] & F|_D \ar[r] & 0. 
}
\]
Here $i$ is $0$, and therefore 
the induced map $\mathrm{Im}(j)\to F|_D$ is an isomorphism. 
Thus the lower row is split, and $\tilde{F}$ gives a tangent vector to $\mc{MMI}_\beta^P$.

(2)
Let $0\to F\to \tilde{F}\to F\to 0$ 
be an extension corresponding to a tangent vector of $\mathcal{MMI}_\beta$ at $[F]$. 
By restriction, we have $0\to F|_D\to \tilde{F}|_D\to F|_D\to 0$ 
satisfying $F|_D\cong \mathcal{O}_{wP}$. 

We take a local parameter $x$ of $D$ at $P$, 
and identify finite-length modules over $\mathcal{O}_{D, P}$ with 
those on $\mathbb{C}[[x]]$. 

The section $\sigma$ in the definition of $\mc{MMI}_\beta$ 
can be written as $x=a\varepsilon$, $a\in\mathbb{C}$. 
Then, with $c=wa$, we have  
$\tilde{F}|_D\cong \mathbb{C}[[x, \varepsilon]]/(x^w-cx^{w-1}\varepsilon)$. 
Here the map $F|_D\to \tilde{F}|_D$ is the map induced from 
the map $e: \tilde{F}|_D\to \tilde{F}|_D; s\mapsto \varepsilon s$, 
where $(\tilde{F}|_D)/\mathrm{ker}(e)$ is identified with $F|_D$. 
Explicitly, it is given by 
$f(x) \mod (x^w)\mapsto  f(x)\varepsilon\mod (x^w-cx^{w-1}\varepsilon, \varepsilon^2)$. 

Write $G:=\mathcal{I}_P^{w-1}\cdot (F|_D)\cong (x^{w-1})/(x^w)$ 
and let $\tilde{G}$ be the $\mathbb{C}$-subspace of 
$\mathbb{C}[[x, \varepsilon]]/(x^w-cx^{w-1}\varepsilon, \varepsilon^2)$
with a basis 
$\overline{1}, \overline{x}, \dots, \overline{x^{w-1}}, \overline{x^{w-1}\varepsilon}$. 
Then the latter is also a $\mathbb{C}[[x]]$-submodule, 
since $x\cdot \overline{x^{w-1}}= \overline{x^w}=c\overline{x^{w-1}\varepsilon}$ 
and 
$x\cdot \overline{x^{w-1}\varepsilon }=
\overline{x^w\varepsilon}= \overline{cx^{w-1}\varepsilon^2}=0$, 
and they fit in a commutative diagram with exact rows and columns: 
\[
\xymatrix{
 & 0 \ar[d] & 0 \ar[d] &  &  \\
0 \ar[r] & G \ar[r]\ar[d] & \tilde{G} \ar[r]\ar[d] & F|_D \ar[r]\ar@{=}[d] & 0 \\
0 \ar[r] & F|_D \ar[d]\ar[r] & \tilde{F}|_D \ar[d]\ar[r] & F|_D \ar[r] & 0 \\
 & F|_{(w-1)P} \ar[d]\ar[r]^\sim & (\tilde{F}|_D)/\tilde{G} \ar[d] &  &  \\
 & 0 & 0. &  & 
 }
\]
The inverse image of $G$ in $F$ is exactly $F(-(w-1)P)$. 
If we write $\tilde{F}'$ for the inverse image of $\tilde{G}$ in $\tilde{F}$, 
we have a commutative diagram with exact rows: 
\[
\xymatrix{
0 \ar[r] & F(-(w-1)P) \ar[r]\ar[d] & \tilde{F}' \ar[r]\ar[d] & F \ar[r]\ar[d] & 0 \\
0 \ar[r] & F \ar[r] & \tilde{F} \ar[r] & F \ar[r] & 0, 
}
\]
and $\tilde{F}$ comes from $\mathrm{Ext}_\cOX^1(F, F(-(w-1)P))$. 

Conversely, given an element of $\mathrm{Ext}_\cOX^1(F, F(-(w-1)P))$, 
let 
$0 \to F(-(w-1)P) \to \tilde{F}' \to F \to 0$ be 
the corresponding extension. 
We obtain a sheaf $\tilde{F}$ by push-out, 
and a commutative diagram with exact rows as above, 
the lower row representing the induced class in $\mathrm{Ext}_\cOX^1(F, F)$. 
Restriction to $D$ gives a commutative diagram with exact rows: 
\[
\xymatrix{
0 \ar[r] & F(-(w-1)P)|_D \ar[r]\ar[d]_i & \tilde{F}'|_D \ar[r]\ar[d]_j & F|_D \ar[r]\ar[d] & 0 \\
0 \ar[r] & F|_D \ar[r] & \tilde{F}|_D \ar[r] & F|_D \ar[r] & 0. 
}
\]
This induces an exact sequence 
$0 \to \mathrm{Im}(i)\to\mathrm{Im}(j)\to F|_D\to 0$, 
and we see by a local calculation 
that $\mathrm{Im}(i)$ is of length $1$ and is annihilated by $x$. 

Let us identify the lower row with the exact sequence in the Lemma~\ref{lem_local_def}. 
We have to show that $g(x)=cx^{w-1}$ for some $c\in\mathbb{C}$. 
Since $\mathrm{Im}(j)$ maps surjectively to $F|_D$, 
it contains an element of the form $\overline{1+h(x)\varepsilon}$, 
and we have 
$x^w\cdot\overline{1+h(x)\varepsilon}=\overline{g(x)\varepsilon}\in \mathrm{Im}(j)$. 
Since it is mapped to $0\in F|_D$, 
we have $\overline{g(x)\varepsilon}\in \mathrm{Im}(\alpha\circ i)$. 
By the remark in the previous paragraph, we have $\overline{xg(x)\varepsilon}=0$ 
in $\mathbb{C}[[x, \varepsilon]]/(x^w-g(x)\varepsilon)$. 
If we write $g(x)=\sum_{i=0}^{w-1} c_ix^i$, this amounts to 
$\sum_{i=0}^{w-2} c_i\overline{x^{i+1}\varepsilon }=0$. 
Since $\overline{1}, \overline{x}, \dots, \overline{x^{w-1}}, 
\overline{\varepsilon}, \overline{x\varepsilon }, \dots, \overline{x^{w-1}\varepsilon }$ 
form a $\mathbb{C}$-basis of 
$\mathbb{C}[[x, \varepsilon]]/(x^w-g(x)\varepsilon, \varepsilon^2)$, 
we have $c_0=\dots=c_{w-2}=0$, which shows our assertion. 
\end{proof}

\begin{Theorem}[=Theorem \ref{thm:nonsing}]\label{thm:smooth}
Let $X$ be a smooth projective rational surface, $D$ an anticanonical curve on $X$ 
and $P\in D_\mathrm{sm}$. 
Then $\mathcal{MMI}_\beta$ and $\mathcal{MMI}_\beta^P$ 
are nonsingular of dimension $2p_a(\beta)=\beta^2-w+2$. 

Consequently, 
the relative compactified Picard scheme over $\shooo$ 
is nonsingular at a point $[F]$ over $[C]$ 
if $F$ is an invertible $\cO_C$-module near $P$ 
(or, equivalently, $F|_D\cong \cO_C|_D$). 
\end{Theorem}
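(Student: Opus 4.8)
The plan is to establish nonsingularity for $\mathcal{MMI}_\beta^P$ first and transfer to $\mathcal{MMI}_\beta$ afterward. Because $K_X+D\sim 0$, the Abel map of $D$ is immersive at every point of $D_\mathrm{sm}$ by Lemma~\ref{lem_mi_is_reduced}(3), so Lemma~\ref{lem_mi_is_reduced}(2) gives the scheme-theoretic decomposition $\mathcal{MMI}_\beta=\coprod_{P\in D(\beta)}\mathcal{MMI}_\beta^P$; it therefore suffices to treat each $\mathcal{MMI}_\beta^P$. I would use the classical pinch: compute that the Zariski tangent space has dimension exactly $2p_a(\beta)$ at every $[F]$ (bounding the local dimension from above), show the local dimension is at least $2p_a(\beta)$, and conclude regularity, hence smoothness over $\CC$, of pure dimension $2p_a(\beta)$.

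For the tangent space I would invoke the previous lemma, which identifies $T_{[F]}\mathcal{MMI}_\beta^P$ with the image of $\mathrm{Ext}_\cOX^1(F,F(-D))\to\mathrm{Ext}_\cOX^1(F,F)$, that is, with $\ker\rho$ for the map $\rho:\mathrm{Ext}_\cOX^1(F,F)\to\mathrm{Ext}_\cOX^1(F,F|_D)$ arising from $0\to F(-D)\to F\to F|_D\to 0$ (exact since $\mathrm{Tor}_1^{\cOX}(F,\cO_D)=0$). Applying $\mathrm{Hom}_\cOX(F,-)$ and using $\mathrm{Hom}(F,F(-D))=0$ yields the exact sequence
\[
0\to \mathrm{Hom}(F,F) \to \mathrm{Hom}(F,F|_D) \to \mathrm{Ext}^1(F,F(-D)) \to \mathrm{Ext}^1(F,F) \overset{\rho}{\to} \mathrm{Ext}^1(F,F|_D)\to \mathrm{Ext}^2(F,F(-D)) \to \mathrm{Ext}^2(F,F).
\]
I would feed in: $\dim\mathrm{Ext}^1(F,F)=\beta^2+1$ and $\mathrm{Ext}^2(F,F)=0$ from the smoothness of $M_\beta(X)$ at $[F]$; the Serre-duality isomorphism $\mathrm{Ext}^2(F,F(-D))\cong\mathrm{Hom}(F,F)^\vee=\CC$, which holds precisely because $\omega_X\cong\cOX(-D)$; and a local computation giving $\dim\mathrm{Ext}^1(F,F|_D)=w$, obtained by resolving $F\cong\cO_C$ near $P$ as $\cOX\overset{f}{\to}\cOX$ and noting that multiplication by $f$ annihilates $F|_D\cong\cO_{wP}$ because $f|_D$ vanishes to order $w$. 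The tail then forces $\rho$ to have image of dimension $w-1$, so $\dim T_{[F]}\mathcal{MMI}_\beta^P=(\beta^2+1)-(w-1)=\beta^2-w+2=2p_a(\beta)$, uniformly in $[F]$.

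For the lower bound I would use Lemma~\ref{lem_mmi_mi}(3), which exhibits $\mathcal{MMI}_\beta^P$ as an open subscheme of the relative compactified Picard scheme over $\mathcal{MI}_\beta^P=\shooo$. By Theorem~\ref{thm_aik} this family is flat with integral fibers of dimension $p_a(\beta)$, so at $[F]$ over $[C]$ the local dimension equals $\dim_{[C]}\mathcal{MI}_\beta^P+p_a(\beta)$. By Lemma~\ref{lem_mi_is_reduced}(1), $\mathcal{MI}_\beta^P$ is locally the linear subspace of $|\beta|$ cut out by the vanishing of the universal section on $wP$; these linear conditions factor through the restriction $H^0(\cO_D(\beta))\to H^0(\cO_{wP})$, whose rank is only $w-1$ since $\beta|_D-wP\sim\cO_D$ has a one-dimensional space of sections. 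Hence $\mathcal{MI}_\beta^P$ has codimension at most $w-1$ in $|\beta|$, and since $h^2(\cOX(\beta))=0$ (because $K_X-\beta$ is non-effective) gives $\dim|\beta|\ge\tfrac12(\beta^2+w)$, we obtain $\dim_{[C]}\mathcal{MI}_\beta^P\ge p_a(\beta)$ and thus local dimension $\ge 2p_a(\beta)$. The pinch then yields smoothness.

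Then $\mathcal{MMI}_\beta$, being the disjoint union of the smooth pieces $\mathcal{MMI}_\beta^P$, is smooth of the same dimension. The final clause is immediate: the points $[F]$ of the relative compactified Picard scheme over $\shooo$ at which $F$ is invertible near $P$---equivalently $F|_D\cong\cO_C|_D\cong\cO_{wP}$---are exactly the points of the open subscheme $\mathcal{MMI}_\beta^P$, now known to be nonsingular. I expect the main obstacle to be the tangent-space computation of the second paragraph: everything turns on the anticanonical hypothesis, which aligns the twist by $-D$ with the Serre-duality twist so that $\mathrm{Ext}^2(F,F(-D))$ collapses to $\CC$ and $\rho$ acquires exactly rank $w-1$. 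This is the logarithmic analogue of Mukai's observation that the triviality of the canonical bundle of a K3 surface makes its moduli of sheaves smooth.
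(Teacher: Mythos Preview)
Your proposal is correct and follows essentially the same strategy as the paper: bound the tangent space dimension at $2p_a(\beta)$ via the long exact sequence for $0\to F(-D)\to F\to F|_D\to 0$ (with the crucial input $\mathrm{Ext}^2_\cOX(F,F(-D))\cong\mathrm{Hom}_\cOX(F,F)^\vee\cong\CC$ coming from $-D\sim K_X$), and match this against the lower bound from Theorem~\ref{thm_aik} applied to the relative compactified Picard over $\mathcal{MI}_\beta^P$. The only cosmetic differences are that the paper reads the exact sequence from the left (computing $\dim\mathrm{Ext}^1_\cOX(F,F(-D))=\beta^2+1$ directly via Riemann--Roch and subtracting the rank $w-1$ of the connecting map $\delta$), while you read it from the right (subtracting $\mathrm{rk}\,\rho=w-1$ from $\dim\mathrm{Ext}^1_\cOX(F,F)=\beta^2+1$); and the paper obtains $\dim\mathcal{MI}_\beta^P=p_a(\beta)$ by citing \cite[Proposition~4.15]{CGKT2}, whereas you sketch a self-contained Riemann--Roch argument. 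Your rank-$(w-1)$ claim for $H^0(\cO_D(\beta))\to H^0(\cO_{wP})$ implicitly uses $h^0(\cO_D(wP))=w$, which follows from $p_a(D)=1$ and $w>0$; this is worth making explicit.
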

\begin{proof}
Let $[F]$ be a point of $\mc{MMI}_\beta^P$ with support curve $C$. 
Since $|\beta|$ contains $C$, 
which is integral by the definition of $\mathcal{MMI}_\beta^P$, 
the dimension of 
$\mathcal{MI}_\beta^P$ is $p_a(C)$ by the remark after \cite[Definition 4.16]{CGKT2}. 
(Note that \cite[Proposition 4.15]{CGKT2} holds 
for a rational surface $S$, 
$E\in |-K_S|$ possibly reducible or non-reduced, 
$\beta$ a curve class containing $C$ with $h^0(\cO_C)=1$ 
and $P\in E(\beta)$.)
By Theorem \ref{thm_aik} and Lemma \ref{lem_mmi_mi}(3), 
$\mathcal{MMI}_\beta^P$ is of dimension $2p_a(C)$ at $[F]$. 
Thus it suffices to show that the dimension of the tangent space at $[F]$ 
is $2p_a(C)$. 

To see this, we note that the tangent space is 
the image of $\mathrm{Ext}_\cOX^1(F, F(-D))\to \mathrm{Ext}_\cOX^1(F, F)$ 
by the previous lemma. 
Consider the natural exact sequence 
\begin{eqnarray*}
\mathrm{Hom}_\cOX(F, F(-D))\to \mathrm{Hom}_\cOX(F, F)
\to \mathrm{Hom}_\cOX(F, F|_{wP}) \\
\overset{\delta}{\to} \mathrm{Ext}_\cOX^1(F, F(-D))\to \mathrm{Ext}_\cOX^1(F, F). 
\end{eqnarray*}
The first term is $0$, the second term is $\mathbb{C}$, 
and the third is of dimension $w$. 
Thus the rank of $\delta$ is $w-1$, and 
we have only to show that $\mathrm{Ext}_\cOX^1(F, F(-D))$ 
has dimension $\beta^2+1$. 

Riemann-Roch theorem tells us that 
$\sum_{i=0}^2 (-1)^i\dim\mathrm{Ext}_\cOX^i(F, F(-D))=-\beta^2$, 
and we already have $\mathrm{Hom}_\cOX(F, F(-D))=0$. 
Since $-D\sim K_X$, 
we have 
$\mathrm{Ext}_\cOX^2(F, F(-D))\cong
\mathrm{Ext}_\cOX^2(F, F\otimes\mathcal{O}_X(K_X))$. 
This is dual to $\mathrm{Hom}_\cOX(F, F)$, which is $\mathbb{C}$, 
thus yielding the assertion.

The second assertion follows from Lemma \ref{lem_mmi_and_jac},  
\ref{lem_mmi_mi} and  \ref{lem_mi_is_reduced}. 
\end{proof}

\begin{Remark}
This theorem can be seen as a partial logarithmic analogue of 
the unobstructedness of sheaves on K3 surfaces \cite{Muk84}.

It might also be possible to think of the discreteness of $D(\beta)$ 
as analogous to the obstructedness of divisor classes 
in the moduli space of K\"ahler K3 surfaces. 
\end{Remark}

\section{Basic stable log maps} \label{sec:loggw}

In this section, 
we recall some definitions and facts about basic stable log maps 
from \cite{GS13, Chen14, AbramChen14} 
and \cite[Section 2]{CGKT2}. We restrict to maps of genus $0$ with $1$ distinguished marked point, target varieties endowed with log structure coming from a divisor. The general practice in log geometry is to underline log schemes to denote the underlying scheme. We only follow this convention when we wish to emphasize the distinction. See \cite{vGOR} for an introduction to log geometry.

Let $X$ be a smooth variety and $D$ a divisor on $X$. 
We will consider a stable log map $f$ maximally tangent to $D$ at a smooth point of $D$, 
and study the local structure of the moduli space at $[f]$. 
This depends only on a neighborhood of $\mathrm{Im}\,f$, 
so we may assume that $D$ is smooth and connected. 
We view $X$ as the log scheme $(X, \shM_X)$ 
endowed with the divisorial log structure associated to $D$. 
Let $\beta\in\hhh_2(X,\ZZ)$ be a curve class.

\subsection{Basic $1$-marked genus $0$ stable log maps to a smooth pair}
\label{ss_basic}

For a log scheme $(X, \shM_X)$, 
the monoid homomorphism $\shM_X\to\cO_X$ will be 
denoted by $\alpha$. 

\begin{Definition}
Let $(\cC/W, \{x_1\})$ be a $1$-marked pre-stable log curve 
(in the sense of \cite[Def. 1.3]{GS13}) 
over a log scheme $W$ 
and $(\cC/W, \{x_1\}, f)$ a stable log map to $X$ over $W$ 
(i.e., $f: \cC\to X$ is a log morphism 
\[
\xymatrix{
\cC \ar[r]^f \ar[d] & X \ar[d] \\
W \ar[r] & \Spec \CC  
}
\]
and $\underline{f}$ is a stable map over $\underline{W}$, see \cite[Def. 1.6]{GS13}). 

It is called 
a stable log map of \emph{maximal tangency} 
of genus $0$ and class $\beta$ 
if the following hold for any geometric point $\bar{w}$ of $\underline{W}$: 

(i)
$\cC_{\bar{w}}$ is of arithmetic genus $0$ and $(f_{\bar{w}})_*[\cC_{\bar{w}}]=\beta$. 

(ii)
$f(x_1(\bar{w}))\in D$, and 
the natural map 
\[
\NN\cong\Gamma(X, \overline{\shM}_X)\longrightarrow 
\overline{\shM}_{\cC_{\bar{w}}, x_1(\bar{w})}
\to \overline{\shM}_{\cC_{\bar{w}}, x_1(\bar{w})}/Q 
\cong \NN, 
\]
where $Q=\overline{\shM}_{W, \bar{w}}$, is given by $1\mapsto D.\beta$. 

In many cases, the condition (ii) follows from the rest of the conditions 
(\cite[Proposition 2.9]{CGKT2}, Proposition \ref{cor_description_log_maps}). 
\end{Definition}

\begin{Remark}
(1)
In the language of \cite[Def. 3.1]{GS13}, 
this is the case $g=0$, $k=1$, the data ``$A$'' provided by $\beta$, 
$Z_1=D$ 
and $s_1\in\Gamma({D}, (\overline{\shM}_D^{\textrm{gp}})^*)$ given by 
$\overline{\shM}_D^{\textrm{gp}}\simeq \ZZ_D \to \ZZ_D, \: 
1\mapsto D.\beta$. 

(2)
The log structure on $X$ is defined in the Zariski topology. 
Also, we work with genus $0$ domain curves, 
and the base scheme $\underline{W}$ will mainly be 
the spectrum of a finite dimensional local algebra over 
an algebraically closed field containing $\CC$, 
and in that case 
it suffices to consider log structures in the Zariski topology. 
\end{Remark}

Now we recall the definition of basicness (minimality) for stable log maps 
in the case of our concern, 
i.e. genus $0$ stable log maps of maximal tangency 
to a log scheme associated to a smooth pair. 
See \cite[\S\S1.4]{GS13} for the general case.

Let $\underline{W}=\Spec \kappa$ for an algebraically closed field $\kappa\supseteq\CC$ 
and let $(\underline{C}/\underline{W}, \{x_1\}, f)$ be the 
$1$-marked genus $0$ stable map of class $\beta$ 
that underlies a maximally tangent stable log map. 
Let $\pi: \underline{C}\to \underline{W}$ denote the structure morphism. 
Denote the nodes of $\underline{C}$ by $R_1, \dots, R_k$,
the irreducible components by $C_0, \dots, C_k$, 
and their generic points by $\eta_0, \dots, \eta_k$. 
We choose maps $g_1, g_2: \{1, \dots, k\}\to\{0, \dots, k\}$ 
such that $R_i=C_{g_1(i)}\cap C_{g_2(i)}$. 

We consider data 
$(Q, \overline{\shM}_C, \psi, \varphi)$, 
where 
\begin{itemize}
\item
$Q$ is a fine saturated (fs) monoid without invertible elements, 
regarded as a sheaf on $\underline{W}$, 
\item
$\overline{\shM}_C$ is a fs sheaf of monoids on $\underline{C}$, and 
\item
$\psi: \pi^{-1}Q\to\overline{\shM}_C$, 
$\varphi: f^{-1}\overline{\shM}_X\to\overline{\shM}_C$ are local homomorphisms 
(i.e. non-invertible elements are mapped to non-invertible elements). 
\end{itemize}
Here $\overline{\shM}_X$ is the ghost sheaf of the log structure on $X$, 
but $\overline{\shM}_C$ is a priori just a general sheaf of monoids satisfying the conditions above. 

We require that $(Q, \overline{\shM}_C, \psi)$ 
endows $\underline{C}$ with the structure of a pre-stable log curve 
on the level of ghost sheaves, i.e., 
\begin{itemize}
\item
$\psi$ is an isomorphism on $\underline{C}\setminus\{x_1, R_1, \dots, R_k\}$, 
\item
there is an isomorphism 
$\overline{\shM}_{C, x_1}\to Q\oplus\NN$ compatible with $\psi$ 
(which is unique), and 
\item
for each $i$, 
there is an element $\rho_{R_i}\in Q\setminus\{0\}$ such that, 
if we consider the homomorphism $\NN\to Q; 1\mapsto \rho_{R_i}$ 
and the diagonal map $\NN\to\NN^2$ and take the pushout, 
there is an isomorphism 
$\overline{\shM}_{C, R_i}\cong Q\oplus_{\NN}\NN^2$ 
characterized by the condition that the generization map 
$\overline{\shM}_{C, R_i}\to\overline{\shM}_{C, \eta_{g_j(i)}}\cong Q$ 
is given by $(q, (a_1, a_2))\mapsto q+a_j\cdot\rho_{R_i}$. 
\end{itemize}

In fact, these conditions are satisfied 
for the ghost sheaves and their homomorphisms 
obtained from a stable log map. 

Assume that 
$f(x_1)\in D$. 
We may also assume that $\{i\mid f(R_i)\in D\}=\{1, \dots, k'\}$ 
and 
$\{i\mid f(\eta_i)\in D\}=\{0, \dots, k''\}$ 
after renumbering.

\begin{Definition}
The type of $(Q, \overline{\shM}_C, \psi, \varphi)$ as above 
is the element 
\[
\boldsymbol{u}=(u_{x_1}, u_{R_1}, \dots, u_{R_{k'}}) \in \NN\times\ZZ^{k'} 
\]
defined as follows. 
\begin{itemize}
\item
$\mathbb{N} \ni u_{x_1}$ is the image of $1$ 
by the map 
\[
\NN\cong (f^{-1}\overline{\shM}_X)_{x_1}
\overset{\varphi_{x_1}}{\longrightarrow} \overline{\shM}_{C, x_1}\longrightarrow 
\overline{\shM}_{C, x_1}/Q\cong \NN. 
\]
\item
$\mathbb{Z} \ni u_{R_i}$ is characterized by the equality
\[
u_{R_i}\cdot\rho_{R_i}=\varphi_{\eta_{g_2(i)}}(\chi_{i, 2}) - \varphi_{\eta_{g_1(i)}}(\chi_{i, 1}) \in Q, 
\]
where $\chi_{i, j}=1\in \NN\cong (f^{-1}\overline{\shM}_X)_{\eta_{g_j(i)}}$ 
if $g_j(i)\leq k''$ (i.e. $f(\eta_{g_j(i)})\in D$), 
and $\chi_{i, j}=0$ otherwise.
\end{itemize}
\end{Definition}

Given a stable map $f$ as above and a type $\boldsymbol{u}$, 
we define a monoid $Q^{\mathrm{basic}}$. 
We regard $\ZZ^{k'} \times \ZZ^{k''+1}$ as the direct product 
of $\ZZ$, one copy for each of $R_1, \dots, R_{k'}$ and 
$\eta_0, \dots, \eta_{k''}$. 
In this additive group, 
let $e'_1, \dots, e'_{k'}, e''_0, \dots, e''_{k''}$ be the standard basis, 
$e''_{k''+1}, \dots, e''_k$ the zero vector and 
\[
a_{R_i} = u_{R_i}\cdot e'_i + e''_{g_i(1)} - e''_{g_i(2)}
=
\begin{blockarray}{ccccccc}
 & \hbox{\scriptsize $i$} & &  \hbox{\scriptsize $k'+g_i(1)+1$} & &  \hbox{\scriptsize $k'+g_i(2)+1$} & \\
\begin{block}{(ccccccc)}
\dots, & u_{R_i}, & \dots, & 1, & \dots, & -1, & \dots \\
\end{block}
\end{blockarray} 
\]
where the component $1$ or $-1$ does not appear 
if $g_i(1)>k''$ or $g_i(2)>k''$. 
Then let $\mathfrak R$ be the saturation of the subgroup 
generated by $a_{R_1}, \dots, a_{R_{k'}}$. 
We define $Q^{\mathrm{basic}}$ 
to be the saturation of the image of  the natural map 
\[
\NN^{k'} \times \NN^{k''+1}
\to 
(\ZZ^{k'} \times \ZZ^{k''+1})/\mathfrak R. 
\]
If we change the numbering of $R_i$, $\eta_i$ and the maps $g_j$ 
in a compatible way, 
there is a canonical homomorphism between the corresponding monoids. 
For example, exchanging $g_1(i)$ and $g_2(i)$ 
reverses the sign of $u_{R_i}$, hence that of $a_{R_i}$ 
and thus $\mathfrak{R}$ does not change. 

\begin{Definition}[{\cite[Definition 1.20, Proposition 1.19]{GS13}}]
Let $(C/W, \{x_1\}, f)$ be a stable log map 
over an fs log point $W=(\Spec\kappa, \shM_W)$ 
and let $\boldsymbol{u}$ denote its type. 
Then it is called basic if the induced homomorphism 
$f^{-1}\overline{\shM}_X\to\overline{\shM}_C$ 
is universal in the category of maps of the ghost sheaves 
of type $\boldsymbol{u}$ from  $f^{-1}\overline{\shM}_X$. 
This is equivalent to the condition that 
the natural homomorphism $Q^{\mathrm{basic}}\to\overline{\shM}_W$, 
induced by 
$(\rho_{R_1}, \dots, \rho_{R_{k'}}, \varphi_{\eta_0}(1), \dots, \varphi_{\eta_{k''}}(1))$, 
is an isomorphism. 

A stable log map $(C/W, \{x_1\}, f)$ over a general fs log scheme $W$ 
is called basic if, for any geometric point $\bar{w}\to \underline{W}$, 
the induced log map over $\bar{w}$ is basic. 
\end{Definition}

\subsection{Moduli}

By \cite[Theorem 0.1]{GS13} the stack of basic stable log maps to $X$ is an algebraic log stack, which is locally of finite type. By \cite[Theorem 0.2]{GS13}, imposing genus 0, class $\beta$ and maximal tangency with $D$ cuts out a proper Deligne-Mumford stack $\overline{\text{M}}_\beta(X,D)$, which admits a virtual fundamental class, thus yielding the log Gromov-Witten invariant $\nxd$ of maximal tangency of $(X,D)$. In fact, by \cite[Corollary 1.2]{Wise19}, the forgetful morphism 
from $\overline{\text{M}}_\beta(X,D)$ to the moduli space of genus 0 stable maps of $X$ of class $\beta$ is finite. In what follows, we analyze special components of $\overline{\text{M}}_\beta(X,D)$ 
corresponding to log maps whose images have two irreducible components. 

The log structure at the nodes gives rise to tropical curves. We illustrate it by the example that will be relevant in the next section, cf.\ Corollary \ref{cor_configuration}. Consider a chain of smooth rational curves $C_1$, $C_0$, $C_2$ meeting successively in nodes. We map it to $(X,D)$ by collapsing $C_0$ to $P\in D$ and mapping $C_1$ and $C_2$ each to a rational curve in $X$ meeting $D$ only at $P$ in maximal tangency. This gives a ``tropical curve in $\mathbb{R}$'' as in Figure \ref{trop}. All edges are parallel, weighted and satisfy the balancing condition, see Corollary \ref{cor_configuration}. For more details, see also \cite[Section 2]{CGKT2}.

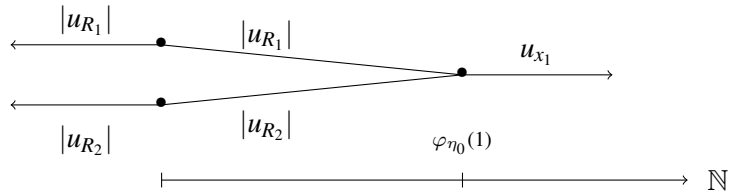
\begin{figure}[htb]
\begin{center}
\caption{Tropical curve}
\label{trop}
\begin{tikzpicture}[smooth, scale=2]
\draw[|-] (-0,0.3) to (2,0.3);
\draw[|->] (2,0.3) to (3.5,0.3);
\draw[->] (2,1) to (3,1);
\draw[<->] (-1,0.8) to (0,0.8) to (2,1) to (0,1.2) to (-1,1.2);
\node at (3.7,0.3) {$\mathbb{N}$};
\node at (0,1.6) {$\quad$};
\node at (0,0.8) {$\bullet$};
\node at (0,1.2) {$\bullet$};
\node at (2,1) {$\bullet$};
\node at (2,0.55) {${\scriptstyle \varphi_{\eta_0}(1)}$};
\node[above] at (2.5,1) {$u_{x_1}$};
\node[above] at (-0.5,1.2) {$|u_{R_1}|$};
\node[above] at (0.7,1.1) {$|u_{R_1}|$};
\node[above] at (-0.5,0.4) {$|u_{R_2}|$};
\node[above] at (0.7,0.5) {$|u_{R_2}|$};
\end{tikzpicture}
\end{center}
\end{figure}

\section{Proof of Theorem \ref{theorem_main}}\label{sec:thmmain}

In this section, we prove Theorem \ref{theorem_main}. 

\subsection{Maximally tangent genus $0$ log maps with $2$ non-collapsed components}

The following gives a rough idea of what a genus $0$, $1$-marked stable log map looks like. 

\begin{Proposition}[{\cite[Corollary 2.10]{CGKT2}}]\label{cor_description_log_maps}
Let $X$ be a divisorial log scheme 
given by a smooth variety ${X}$ 
and a smooth divisor ${D}$. 

For a genus 0 stable log map $f: (C/W, x_1)\to X$ 
with $\underline{W}=\Spec\CC$, 
assume the following: 
\begin{itemize}
\item
$w:={D}.{f}_*[{C}]>0$ 
and $d_i:={D}.{f}_*[C_i]\geq 0$ 
for any irreducible component $C_i$ of ${C}$. 
\item
If $C_i$ is an irreducible component of ${C}$ 
that is not collapsed by ${f}$, 
then ${f}(C_i)\not\subseteq{D}$. 
\end{itemize}
Then it is of maximal tangency, 
and the following holds. 
\begin{enumerate}
\item
${f}({C})\cap {D}$ consists of one point $P$. 
\item
If there is only $1$ non-collapsed component, 
then ${C}\cong \PP^1$ and $f^*(D)=w x_1$. 
\item
If there are at least $2$ non-collapsed components, 
and ${D}.{f}_*[C_i]> 0$ holds for non-collapsed components, 
then ${C}$ is given by adding $C_i=\PP^1$ as leaves 
to a tree ${C}'$ of $\PP^1$ collapsed to $P$, 
with maps $f_i: C_i\to X$ 
satisfying $f_i^*({D})=d_i(C_i\cap {C}')$. 
\end{enumerate}
\end{Proposition}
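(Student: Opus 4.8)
The plan is to reduce everything to a combinatorial analysis of the dual graph of $\underline{C}$ together with the tropical type $\boldsymbol{u}$, and then to read off the geometry. Write $\Gamma$ for the dual graph, which is a tree since $\underline{C}$ has arithmetic genus $0$, and recall that its unique leg corresponds to the marked point $x_1$. Two local facts will drive the argument. First, for a non-collapsed component $C_i$ (so $f(C_i)\not\subseteq D$) and any non-special point $p\in C_i$, one must have $f(p)\notin D$: otherwise $\varphi_p$ would send the non-invertible generator of $(f^{-1}\overline{\shM}_X)_p\cong\NN$ to the invertible element of the trivial monoid $\overline{\shM}_{C,p}$, contradicting that $\varphi$ is a local homomorphism. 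Hence each non-collapsed $C_i$ meets $D$ only at its special points, where the orders of tangency are non-negative integers summing to $d_i$. Second, from the defining relation $u_{R_i}\cdot\rho_{R_i}=\varphi_{\eta_{g_2(i)}}(\chi_{i,2})-\varphi_{\eta_{g_1(i)}}(\chi_{i,1})$ together with $\rho_{R_i}\neq 0$ in $Q$, one reads off the contact order across every node: it vanishes when both adjacent components are non-collapsed, and is otherwise governed by the ``heights'' $\varphi_{\eta_j}(1)\in Q$, which are nonzero exactly for the components collapsed into $D$.

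I would then set up the balancing. Assigning to each edge of $\Gamma$ the integer $u_{R_i}$ and to the leg the integer $u_{x_1}\in\NN$, the relations above express conservation at every vertex: at a non-collapsed vertex the outgoing contact orders sum to $d_i$, while at a vertex collapsed into $D$ (where $d_i=0$) they sum to zero. Summing this conservation law over all vertices of the tree, the contributions of internal edges cancel in pairs and one is left with $u_{x_1}=\sum_i d_i=\sum_{\text{non-collapsed}} d_i=D.f_*[C]=w$, which is maximal tangency, i.e.\ condition (ii). Moreover, since a non-collapsed component sits at ``height $0$'' (its generic point avoids $D$), all of its contact orders are non-negative and point toward increasing height, so every non-collapsed vertex is a pure source; with the single leg $x_1$ as the unique sink, the flow on $\Gamma$ runs monotonically toward $x_1$.

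For (1), I would combine this source/sink picture with the fact that adjacent components collapsed into $D$ share a node and hence map to the same point of $D$. By stability no genus-$0$ collapsed component can be a leaf of $\Gamma$ (it would carry at most one node and the marked point, fewer than three special points), so every leaf is non-collapsed; conversely the pure-source property forces each non-collapsed vertex to be a leaf, since it cannot receive flux and hence can have no child in the tree rooted at $x_1$ (a child subtree would terminate in a non-collapsed leaf whose flux would have to flow back into it). Thus the non-collapsed components are exactly the leaves, the collapsed components form the connected internal tree $C'$, and since $C'$ is connected with all components mapping into $D$ it is contracted to a single point $P$; each non-collapsed leaf meets $D$ only at the node attaching it to $C'$, whose image is $P$, giving $f(C)\cap D=\{P\}$. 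In case (2), with a single non-collapsed component there are no collapsed components at all (they would be unstable leaves), so $\underline{C}\cong\PP^1$; by the first local fact $f^{-1}(D)$ is supported at $x_1$, and $\deg f^*D=w$ then yields $f^*(D)=w\,x_1$.

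The remaining assertions of (3) follow quickly. Because $w=\sum_j d_j>d_i$ once there are at least two non-collapsed components with $d_i>0$, the marked point cannot lie on a non-collapsed leaf $C_i$ (whose total tangency is only $d_i<w$), so $x_1\in C'$ and each leaf $C_i$ has its attaching node as its only special point; by the first local fact $f_i^{-1}(D)$ is supported there, and since the tangency orders sum to $d_i$ we obtain $f_i^*(D)=d_i\,(C_i\cap C')$. I expect the main obstacle to be making the balancing/conservation statement fully rigorous directly from the monoid $Q^{\mathrm{basic}}$ and the generization maps, in particular fixing the orientation and sign conventions for the $u_{R_i}$ so that ``flux flows toward $x_1$'' becomes a precise and correctly signed statement; once that bookkeeping is in place, the tree combinatorics (no collapsed leaves, non-collapsed vertices pure sources) and the stability count are routine.
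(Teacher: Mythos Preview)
The paper does not give a proof of this proposition; it is quoted as \cite[Corollary~2.10]{CGKT2}, and the text immediately moves on to apply it (``The following Corollary is a direct application of Proposition~\ref{cor_description_log_maps}''). So there is no in-paper argument to compare against.

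Your approach---extracting the tropical type and running a balancing/flow argument on the dual tree, then combining with stability---is the natural one, and is essentially how the statement is established in \cite[\S2]{CGKT2}. Two small corrections are worth making. First, at a non-special smooth point $p$ of a component $C_i$ the ghost sheaf $\overline{\shM}_{C,p}$ is not the trivial monoid but $Q$ (the paper records that $\psi$ is an isomorphism away from nodes and markings); the contradiction you want instead comes from generization to $\eta_i$: since $f(\eta_i)\notin D$ one has $(f^{-1}\overline{\shM}_X)_{\eta_i}=0$, so compatibility forces the generization of $\varphi_p(1)$ to be $0\in Q$, while the generization map $\overline{\shM}_{C,p}\to\overline{\shM}_{C,\eta_i}$ is the identity on $Q$; hence $\varphi_p(1)=0$, contradicting locality. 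Second, in your argument for (1) you tacitly identify ``collapsed'' with ``collapsed into $D$''; a component contracted to a point \emph{outside} $D$ also sits at height $0$ and carries no flux, so your ``pure source'' dichotomy needs a word about these (they cause no trouble: adjacent collapsed components share a node and hence a common image point, so the connected internal subtree $C'$ is still contracted to a single point, and that point lies in $D$ because the vertex carrying $x_1$ does once $u_{x_1}=w>0$). You are right that the remaining work is the orientation/sign bookkeeping for the $u_{R}$'s; once that is fixed, the tree combinatorics you describe go through.
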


Now we consider the case where the image of 
$\underline{f}$ has $2$ components. 
We employ the notation of \S\S\ref{ss_basic}. The following Corollary is a direct application of Proposition \ref{cor_description_log_maps}.

\begin{Corollary}\label{cor_configuration}
Let $X$ be a divisorial log scheme 
given by a smooth variety ${X}$ 
and a smooth divisor ${D}$. 

For a genus 0 stable log map $f: (C/W, x_1)\to X$ 
with $\underline{W}=\Spec\CC$, 
assume that $C$ has $2$ non-collapsed components $C_1$ and $C_2$ 
with $f(C_i)\not\subseteq D$, 
that $f\big{|}_{C_i}$ are birational, 
and that $f(C_1)\not= f(C_2)$. 

Assume $d_i:=D.f(C_i)>0$ and let $d=\gcd(d_1, d_2)$ and $e_i=d_i/d$. 
Then the following holds. 

(1) 
$C$ is a chain of smooth rational curves $C_1$, $C_0$ and $C_2$ in this order, 
$C_0$ mapping to a point $P\in D$, 
and we have $(f|_{C_i})^*D=d_iR_i$ for $i=1, 2$, 
where $R_i=C_i\cap C_0$. 

We think of $C_0$ as the ``first'' component at $R_1, R_2$, 
i.e. 
we set $g_1(1)=0$, $g_2(1)=1$ and $g_1(2)=0$, $g_2(2)=2$. 

(2)
With the notation in \S\S\ref{ss_basic}, 
the homomorphism on the log structure at the ghost level 
is given by 
$\varphi_{R_1}(1)=\overline{(0, (d_1, 0))}$, 
$\varphi_{R_2}(1)=\overline{(0, (d_2, 0))}$ 
and $\varphi_{\eta_0}(1)=d_1\cdot\rho_{R_1}=d_2\cdot\rho_{R_2}$. 
The type of $f$ is 
given by $\boldsymbol{u}=(u_{x_1}, u_{R_1}, u_{R_2})=(d_1+d_2, -d_1, -d_2)$. 

(3)
The stable log map $f$ is basic if and only if 
$\overline{\shM}_W$ is isomorphic to $\NN$ and, 
identifying $\overline{\shM}_W$ with $\NN$, 
$\rho_{R_1}=e_2$, $\rho_{R_2}=e_1$ 
and $\varphi_{\eta_0}(1)=de_1 e_2$. 
The last condition can be replaced by 
``either $\rho_{R_1}=e_2$, $\rho_{R_2}=e_1$ 
or $\varphi_{\eta_0}(1)=de_1 e_2$.'' 
\end{Corollary}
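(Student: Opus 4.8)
The plan is to derive all three parts from Proposition \ref{cor_description_log_maps} together with the combinatorial recipe for the basic monoid recalled in \S\S\ref{ss_basic}; concretely, (1) is a structure statement, (2) computes the type of $f$, and (3) is a finite lattice computation identifying $Q^{\mathrm{basic}}$.

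For (1), I would first invoke Proposition \ref{cor_description_log_maps}(3): its hypotheses ($w>0$, each $d_i\geq 0$, and the non-collapsed components not mapping into $D$) hold, and since $d_1,d_2>0$ we land in case (3). This already produces a single intersection point $P$, exhibits $C$ as the leaves $C_1,C_2$ attached to a tree $C'$ of $\PP^1$'s collapsed to $P$, and gives $(f|_{C_i})^*D=d_i(C_i\cap C')$. To see $C'$ is a single $\PP^1$, I would run a stability count: the non-collapsed $C_1,C_2$ impose no constraint, while each collapsed genus-$0$ component must carry at least three special points. If $C'$ had $k$ components it would have $k-1$ internal nodes and absorb the two leaf-nodes together with the marking $x_1$, for $2(k-1)+3=2k+1$ special-point incidences; the bound $2k+1\geq 3k$ forces $k=1$, and the same count shows $x_1\in C_0:=C'$ (placing $x_1$ on a $C_i$ would leave the collapsed $C_0$ unstable). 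Thus $C=C_1\cup C_0\cup C_2$ is a chain with nodes $R_i=C_i\cap C_0$; setting $g_1(i)=0$, $g_2(i)=i$ records that $C_0$ is the first branch at each node, and $(f|_{C_i})^*D=d_iR_i$.

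For (2), the first step is to locate the strata meeting $D$: among the generic points only $\eta_0$ maps into $D$, so $k''=0$, while both nodes map to $P$, so $k'=2$. The value $u_{x_1}=d_1+d_2=w$ is exactly the maximal-tangency condition (ii). To pin down $\varphi_{R_i}(1)\in\overline{\shM}_{C,R_i}\cong Q\oplus_{\NN}\NN^2$, I would use that its generization to the non-collapsed branch $\eta_i$ must vanish, since the log structure is trivial at $f(\eta_i)\notin D$; as $Q$ has no nonzero invertible elements this forces the $Q$-part and the $C_i$-coordinate to be $0$, leaving $\varphi_{R_i}(1)=\overline{(0,(d_i,0))}$, where the coefficient $d_i$ is the contact order recorded by $(f|_{C_i})^*D=d_iR_i$. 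Generizing instead to $\eta_0$ then yields $\varphi_{\eta_0}(1)=d_i\rho_{R_i}$. Feeding $\chi_{i,1}=1$, $\chi_{i,2}=0$ into the defining relation $u_{R_i}\rho_{R_i}=\varphi_{\eta_i}(\chi_{i,2})-\varphi_{\eta_0}(\chi_{i,1})=-\varphi_{\eta_0}(1)$ gives $u_{R_i}=-d_i$, so $\boldsymbol{u}=(d_1+d_2,-d_1,-d_2)$.

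For (3), I would compute $Q^{\mathrm{basic}}$ explicitly. With $k'=2$, $k''=0$ the ambient group is $\ZZ^3$ on $e'_1,e'_2,e''_0$ (and $e''_1=e''_2=0$), with generators $a_{R_1}=-d_1e'_1+e''_0$ and $a_{R_2}=-d_2e'_2+e''_0$. Saturating the subgroup they span and quotienting gives $\ZZ^3/\mathfrak{R}\cong\ZZ$, under which $e'_1\mapsto e_2$, $e'_2\mapsto e_1$, $e''_0\mapsto de_1e_2$ (the relation $e_1e'_1=e_2e'_2$ coming from saturating $d_1e'_1=d_2e'_2=e''_0$ using $\gcd(e_1,e_2)=1$). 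The image of $\NN^3$ is the submonoid generated by $e_1,e_2$, whose saturation is all of $\NN$, so $Q^{\mathrm{basic}}=\NN$. The basicness criterion, that the map $Q^{\mathrm{basic}}\to\overline{\shM}_W$ induced by $(\rho_{R_1},\rho_{R_2},\varphi_{\eta_0}(1))$ be an isomorphism, then reads $\overline{\shM}_W\cong\NN$ with $\rho_{R_1}=e_2$, $\rho_{R_2}=e_1$, $\varphi_{\eta_0}(1)=de_1e_2$; and the relations $\varphi_{\eta_0}(1)=d_1\rho_{R_1}=d_2\rho_{R_2}$ from (2) together with cancellation in $\NN$ show that any one of these determines the rest, giving the \emph{either/or} reformulation. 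The genuinely delicate step is the one in (2) passing from the scheme-theoretic tangency $(f|_{C_i})^*D=d_iR_i$ to the ghost-level value $\varphi_{R_i}(1)=\overline{(0,(d_i,0))}$, i.e.\ verifying that the contact order is faithfully recorded by the coefficient of $\rho_{R_i}$; everything else is either a direct citation of Proposition \ref{cor_description_log_maps} or a finite lattice calculation.
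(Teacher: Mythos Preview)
Your proposal is correct and follows exactly the intended route: the paper gives no proof beyond the sentence ``The following Corollary is a direct application of Proposition \ref{cor_description_log_maps},'' and you have supplied precisely the details that application requires---the stability count reducing $C'$ to a single $\PP^1$, the ghost-sheaf computation of the type via generization, and the lattice calculation identifying $Q^{\mathrm{basic}}$ with $\NN$. One small remark on exposition: in (2) your phrase ``the $C_i$-coordinate'' for the entry $a_2$ that vanishes is potentially confusing, since $a_1$ is the exponent of $\bar{\bz}_{i1}$ (the $C_i$-side parameter) while $a_2$ is the entry picked up by generization \emph{to} $\eta_i$; your conclusion $\varphi_{R_i}(1)=\overline{(0,(d_i,0))}$ is nonetheless correct, and the step you flag as delicate---reading off $a_1=d_i$ from $(f|_{C_i})^*D=d_iR_i$---is justified by comparing $\alpha(f^\flat(\bw_1))$ with $f^*w_1=z_{i1}^{d_i}\cdot(\text{unit})$ on the $C_i$-branch.
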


\subsection{Coordinates and log structures}

In the rest of this section, we make the following assumptions, 
which are part of those in Theorem \ref{theorem_main} 
after taking a neighborhood of $Z_1\cup Z_2$. 
\begin{Assumptions}\label{assump_pair}
Let $(X, D)$ be a pair of 
a smooth variety and a smooth divisor. 
Let $Z_1$ and $Z_2$ be proper integral curves in $X$ satisfying the following: 
\begin{enumerate}
\item[(1)]
$Z_i$ is a rational curve of class $\beta_i$ maximally tangent to $D$, 
\item[(3)]
$Z_1\cap D$ and $Z_2\cap D$ consist of the same point $P$, 
and 
\item[(4')]
$Z_1\not=Z_2$. 
\end{enumerate}
Write $d_i=de_i=D.Z_i$ with $\gcd(e_1, e_2)=1$. 
\end{Assumptions}

We will study the deformation space of stable log maps 
by an explicit calculation. 
As a preparation, 
let us fix coordinate systems. 

In order to deal with the deformations of $C$, 
we make it stable by 
adding marked points. 
The moduli scheme $\bM_{0, 5}$ 
of $5$-marked genus $0$ stable curves 
has a point corresponding to our curve $C=C_1\cup C_0\cup C_2$, 
with markings $x_2, x_3\in C_1$, $x_1\in C_0$ and $x_4, x_5\in C_2$.

\begin{Notation}\label{notation_curve}
As a formal scheme supported by $1$ point, we have the following description 
for the formal neighborhood $\bM$ of $[(C, x_1, \dots, x_5)]\in\bM_{0, 5}$, 
the universal curve $\cC$ 
(which has the same underlying space as $C$) 
over $\bM$ 
and the marked sections $\tx_1, \dots, \tx_5$ extending $x_1, \dots, x_5$. 

\begin{enumerate}
\item
$\bM$ is a formal $2$-disk with coordinates $\mu_1$ and $\mu_2$. 
\item
$\cC=\cU'_1\cup\cU_1\cup\cU_2\cup\cU'_2$, 
where
\begin{itemize}
\item 
$\cU'_i=\bA^1_\bM=\{(z_{i0}, \mu_1, \mu_2)\}$, 
\item
$\cU_i=\{(z_{i1}, z_{i2}, \mu_1, \mu_2))| z_{i1}z_{i2}=\mu_i\}
\subset\bA^2_\bM$, 
\item
$\{z_{i0}\not=0\}\subset\cU'_i$ and $\{z_{i1}\not=0\}\subset\cU_i$ are patched 
by $z_{i0}=1/z_{i1}$, and 
\item
$\{z_{12}\not=0\}\subset\cU_1$ and $\{z_{22}\not=0\}\subset\cU_2$ are patched 
by $z_{12}=1/z_{22}$
(and $z_{11}=\mu_1z_{22}$, $z_{21}=\mu_2z_{12}$). 
\end{itemize}
We denote the point $(z_{i1}=z_{i2}=0)$ by $R_i$. 
\item
The marked sections are 
$\tx_1: (z_{12}=1)$ on $\cU_1$, 
$\tx_2: (z_{10}=0)$ on $\cU'_1$, 
$\tx_3: (z_{11}=1)$ on $\cU_1$, 
$\tx_4: (z_{21}=1)$ on $\cU_2$ 
and 
$\tx_5: (z_{20}=0)$ on $\cU'_2$. 
\end{enumerate}
We denote the central fiber 
of $\cU_i$ by $U_i$, 
that of $\tx_1$ by $x_1$, etc. 
For $i=1$ or $2$, write $C_i^\circ$ for $C_i\setminus R_i$ 
and $\cC_i^\circ$ for the corresponding 
open subspace of $\cC$. Figure \ref{fig:coord} illustrates the notations for the coordinates at the central fiber. 
\end{Notation}

\begin{figure}[htb]
\centering
\caption{Coordinates along $C$}
\begin{tikzpicture}[line cap=round,line join=round,x=0.4cm,y=0.4cm]
\clip(-0.8,1) rectangle (12.8,11.5);
\draw [line width=1pt,domain=0:3] plot(\x,{(--10-3*\x)});
\draw [line width=1pt,domain=1:11] plot(\x,{(--2)});
\draw [line width=1pt,domain=9:12] plot(\x,{(-26 --3*\x)});
\draw [->] (0.9,8.5) -- (1.4,7) node[near start,right]{$z_{10}$};
\draw [->] (2.967,2.3) -- (2.467,3.8) node[right]{$z_{11}$};
\draw [->] (2.967,2.3) -- (4.547,2.3) node[above]{$z_{12}$};
\draw [->] (11.1,8.5) -- (10.6,7) node[near start,left]{$z_{20}$};
\draw [->] (9.033,2.3) -- (9.533,3.8) node[left]{$z_{21}$};
\draw [->] (9.033,2.3) -- (7.453,2.3) node[above]{$z_{22}$};

\draw [fill=black] (0.5,8.5) circle (1.8pt) node[left]{$x_2$};
\draw [fill=black] (2,4) circle (1.8pt) node[left]{$x_3$};
\draw [fill=black] (6,2) circle (1.8pt) node[below]{$x_1$};
\draw [fill=black] (11.5,8.5) circle (1.8pt) node[right]{$x_4$};
\draw [fill=black] (10,4) circle (1.8pt) node[right]{$x_5$};

\draw (12,2) node {$C_0$};
\draw (0,10) node[above] {$C_1$};
\draw (12,10) node[above] {$C_2$};
\end{tikzpicture}
\label{fig:coord}
\end{figure}
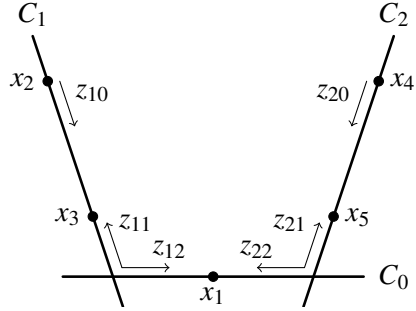

\begin{Lemma}\label{lem_expansion}
Let $S=\Spec R$ where $R$ is a local finite-dimensional $\CC$-algebra 
with a basis $\{r_1, \dots, r_n\}$, 
and $\cC_S$ the curve induced from $\cC$ by a morphism $S\to \bM$. 

Identifying the underlying topological spaces of $\cC$ and $\cC_S$ 
with that of $C$, 
let $U$ be an open neighborhood of $R_i$ in $U_i$. 

Then any element $F$ of $\cO_{\cC_S}(U)$ 
can be uniquely written as 
\[
F = \sum_{k=1}^n r_k\left\{
F_0^{(k)}+z_{i1}F_1^{(k)}(z_{i1})+z_{i2}F_2^{(k)}(z_{i2})
\right\}, 
\]
where $F_0^{(k)}\in\CC$, 
$F_1^{(k)}(z_{i1})\in\cO_{C_i}(U\cap C_i)$ 
and 
$F_2^{(k)}(z_{i2})\in\cO_{C_0}(U\cap C_0)$. 
Here, if $i$ is e.g.\ $1$, we regard $\cO_{C_1}(U\cap C_1)$ as a subring of 
$\cO_{\cC_S}(U)$ using the projection map $\cU_1\to \Spec\CC[z_{11}]$, 
etc. 
\end{Lemma}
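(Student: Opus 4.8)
The plan is to reduce the statement to the freeness of $\cO_{\cC_S}$ over $R$ near the node $R_i$ and then read off the decomposition by expanding the $R$-coefficients in the basis $r_1,\dots,r_n$. The identity to be proved is $R$-linear and concerns sections over a neighborhood $U$ of $R_i$ inside the chart $\cU_i$, so I would first record that the map $S\to\bM$ is given by $\mu_i\mapsto m_i$ for some $m_i$ in the maximal ideal of $R$ (nilpotent, since $R$ is a finite-dimensional local $\CC$-algebra and hence has residue field $\CC$). Thus near $R_i$ the structure sheaf of $\cC_S$ is cut out by $z_{i1}z_{i2}=m_i$, with $z_{i1}$ the coordinate along $C_i$ and $z_{i2}$ that along $C_0$. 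In these terms the assertion is exactly that the natural $R$-linear map
\[
R\otimes_\CC\bigl(\CC\oplus z_{i1}\cO_{C_i}(U\cap C_i)\oplus z_{i2}\cO_{C_0}(U\cap C_0)\bigr)\longrightarrow \cO_{\cC_S}(U)
\]
is an isomorphism; choosing the basis $r_1,\dots,r_n$ then converts this isomorphism into the displayed formula together with its uniqueness.

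For surjectivity (existence of the expansion) I would reduce monomials using the defining relation: since $z_{i1}z_{i2}=m_i\in R$, any mixed monomial satisfies $z_{i1}^a z_{i2}^b=m_i^{\min(a,b)}z_{i1}^{a-\min(a,b)}$ (or the symmetric expression with $z_{i2}$), so every section of $\cO_{\cC_S}(U)$ is an $R$-combination of pure powers of $z_{i1}$ and of $z_{i2}$. Collecting the constant term into $F_0^{(k)}\in\CC$, the positive $z_{i1}$-powers into $z_{i1}F_1^{(k)}(z_{i1})$ and the positive $z_{i2}$-powers into $z_{i2}F_2^{(k)}(z_{i2})$, and expanding the coefficients in $r_1,\dots,r_n$, yields a representation of the required shape; this is the same style of reduction used in the proof of Lemma \ref{lem_mmi_mi}.

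For injectivity (uniqueness) I would invoke flatness. The family $\cC\to\bM$ is flat, so the base change $\cC_S\to S$ is flat; moreover $U$ is automatically affine, being an open subset of the affine nodal curve $U_i$, so $\cO_{\cC_S}(U)$ is a flat module over the Artinian local ring $R$ and therefore free, with $\cO_{\cC_S}(U)\otimes_R\CC=\cO_{U_i}(U)$ the ring of the central nodal fiber over $U$. The pure powers reduce modulo the maximal ideal of $R$ to the standard $\CC$-basis of $\cO_{U_i}(U)=\CC\oplus z_{i1}\cO_{C_i}(U\cap C_i)\oplus z_{i2}\cO_{C_0}(U\cap C_0)$, so by the nilpotent form of Nakayama's lemma they form an $R$-basis of $\cO_{\cC_S}(U)$, giving both injectivity and uniqueness. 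The same conclusion can be verified concretely: $\CC[z_{i1},z_{i2}]$ is free over $\CC[w]$, $w:=z_{i1}z_{i2}$, on exactly the pure powers, and the local model $R[z_{i1},z_{i2}]/(z_{i1}z_{i2}-m_i)$ is its base change along $\CC[w]\to R$, $w\mapsto m_i$.

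The point demanding the most care---and the reason I would route uniqueness through flatness rather than through a naive restriction to the two branches---is that when $m_i\neq 0$ the curves $C_i$ and $C_0$ are \emph{not} closed subschemes of $\cC_S$, since the node is partially smoothed; one therefore cannot define $F_1^{(k)}$ and $F_2^{(k)}$ by literally restricting $F$ to a branch. This is exactly why the statement embeds $\cO_{C_i}(U\cap C_i)$ into $\cO_{\cC_S}(U)$ via the projection $\cU_i\to\Spec\CC[z_{i1}]$ rather than via a restriction map. Once this is understood, the remaining bookkeeping---that the open set $U$ only inverts functions of a single branch coordinate and so is compatible with the free-module structure---is routine.
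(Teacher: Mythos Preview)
Your proof is correct and follows precisely the route the paper indicates: the paper's proof is the one-line remark ``This basically follows from $z_{i1}z_{i2}=\mu_i$ and flatness (or an explicit calculation),'' and you have simply unpacked both halves of that sentence, using the relation for surjectivity and flatness (hence freeness over the Artinian local base) for uniqueness. The only quibble is that the parenthetical pointer to Lemma~\ref{lem_mmi_mi} is not apt---that lemma's monomial manipulations concern a different situation---but this is a cosmetic cross-reference, not a logical dependency.
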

\begin{proof}
This basically follows from $z_{i1}z_{i2}=\mu_i$ and flatness (or an explicit calculation). 
\end{proof}

\begin{Notation}\label{notations_log_str}
The standard log structures on $\bM$ and $\cC$ are the divisorial log structures 
defined by the degeneracy locus and its inverse image and the sections. 
We replace the log structure at $\tx_2, \dots, \tx_5$ 
with the one induced from $\bM$. 
They are explicitly given as follows. 
\begin{itemize}
\item
$\cM_{\bM, 0} = \coprod_{a, b=0}^\infty \bmu_1^a\bmu_2^b\cO_{\bM, 0}^\times $, 
with $\alpha(\bmu_1)=\mu_1$ and $\alpha(\bmu_2)=\mu_2$. 
\item
$\cM_{\cC, R_1} = 
\coprod_{a, b, c=0}^\infty \bz_{11}^a\bz_{12}^b\bmu_2^c\cO_{\cC, R_1}^\times$
and
$\cM_{\cC, R_2} = 
\coprod_{a, b, c=0}^\infty  \bz_{21}^a\bz_{22}^b\bmu_1^c\cO_{\cC, R_2}^\times$, \\
with $\alpha(\bz_{ij})=z_{ij}$. 
\item
$\cM_{\cC, x_1} = 
\coprod_{a, b, c=0}^\infty \bz^{\prime a}\bmu_1^b\bmu_2^c\cO_{\cC, x_1}^\times $
with $\alpha(\bz')=z_{12}-1$. 
\item
At other points, 
$\cM_\cC=\coprod_{a, b=0}^\infty \bmu_1^a\bmu_2^b\cO_\cC^\times $. 
\end{itemize}
The structure homomorphism and the generization maps are given by 
the following rule: 
\begin{itemize}
\item
$\bmu_i$ at various points are identified. 
\item
$\bmu_i$ maps to $\bz_{i1}\bz_{i2}\in\shM_{\cC, R_i}$. 
\item
If $\{j, j'\}=\{1, 2\}$, 
$\bz_{ij}$ and $\bz_{ij'}$ 
generizes to $z_{ij}$ and $\bmu_i z_{ij}^{-1}$ on $\{z_{ij}\not=0\}$. 
$\bz'$ generizes to $z_{12}-1$. 
\end{itemize}
\end{Notation}
The parameter spaces we will be concerned with are of the following form. 

\begin{Notation}\label{notation_curve_family}
For a nonnegative integer $n$ 
and $m(s)\in s\cdot\CC[s]/(s^{n+1})$, 
let $S_{n, m(s)}$ be the log scheme $(\underline{S}, \cM)$ 
defined as follows. 
\begin{itemize}
\item
$\underline{S}= S_n = \Spec\CC[s]/(s^{n+1})$. 
\item
$\cM=\NN\times\cO_{\underline{S}}^\times$, 
with $\alpha(k, u(s)) = m(s)^k u(s)$. 
\end{itemize}

For coprime positive integers $r_1, r_2$ 
and $u_1(s), u_2(s)\in \cO_{\underline{S}}^\times$, 
let $\cC_{n, m(s), (r_1, u_1(s)), (r_2, u_2(s))}$ 
denote the log scheme over $S_{n, m(s)}$ 
obtained from $\cC$ by taking the fiber product 
with the log morphism $S_{n, m(s)}\to \bM$ 
defined by $\bmu_i\mapsto (r_i, u_i(s))$ 
(and $\mu_i\mapsto m(s)^{r_i}u_i(s)$) 
in the category of log schemes.  
\end{Notation}

\begin{Lemma}
Write $S$ for $S_{n, m(s)}$ 
and $\cC_S$ for $\cC_{n, m(s), (r_1, u_1(s)), (r_2, u_2(s))}$. 
Then $S$ and $\cC_S$ are fs log schemes 
and $\cC_S/S$ with the induced section 
is a $1$-marked pre-stable log curve. 

As a scheme, it is represented as in Notation \ref{notation_curve} 
with $\mu_i$ replaced by $m(s)^{r_i}u_i(s)$. 

The induced log structure can be described as follows. 
(We use the same symbols for points and functions on $\cC$ and $\cC_S$ etc.) 
\begin{itemize}
\item
$\cM_{S, 0} = \coprod_{a=0}^\infty \boldm^a\cO_{S, 0}^\times $, 
with $\alpha(\boldm)=m(s)$. 
\item
$\cM_{\cC_S, R_i} =
\bigcup_{a, b, c=0}^\infty \bz_{i1}^a\bz_{i2}^b\boldm^c\cO_{\cC_S, R_i}^\times  =
\coprod_{a, b=0}^\infty\coprod_{c=0}^{r_i-1} 
\bz_{i1}^a\bz_{i2}^b\boldm^c\cO_{\cC_S, R_i}^\times $
subject to the relation 
$\bz_{i1}\bz_{i2}=\boldm^{r_i}u_i(s)$, 
with $\alpha(\bz_{ij})=z_{ij}$. 
\item
$\cM_{\cC_S, x_1} = 
\coprod_{a, b=0}^\infty \bz^{\prime a}\boldm^b\cO_{\cC_S, x_1}^\times $ 
with $\alpha(\bz')=z_{12}-1$. 
\item
At other points, $\cM_{\cC_S}=\coprod_{a=0}^\infty \boldm^a\cO_{\cC_S}^\times $. 
\end{itemize}
The structure homomorphism and the generization maps are given by 
the following relations: 
\begin{itemize}
\item
$\boldm$ at various places are to be identified. 
\item
If $\{j, j'\}=\{1, 2\}$, 
$\bz_{ij}$ and $\bz_{ij'}$ 
generizes to $z_{ij}$ and $\boldm^{r_i}u_i(s)z_{ij}^{-1}$ on $\{z_{ij}\not=0\}$. 
$\bz'$ generizes to $z_{12}-1$. 
\end{itemize}
\end{Lemma}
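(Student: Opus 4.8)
The plan is to identify $\cC_S := \cC_{n, m(s), (r_1, u_1(s)), (r_2, u_2(s))}$ with the fibre product $\cC \times_{\bM} S$ of fs log schemes and to compute it chart by chart, using the explicit descriptions of $\cC$, of its log structure, and of $S = S_{n, m(s)}$ from Notations \ref{notation_curve}, \ref{notations_log_str} and \ref{notation_curve_family}. First I would record that $S$ is fs: its ghost sheaf is the free monoid $\NN\boldm$ and $\alpha(\boldm) = m(s)$, so $S$ is the standard log point over $\Spec\CC[s]/(s^{n+1})$; and that $\bmu_i \mapsto (r_i, u_i(s))$ indeed defines a morphism $S \to \bM$ of fs log schemes inducing $\mu_i \mapsto m(s)^{r_i} u_i(s)$ on structure sheaves. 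A priori the underlying scheme of $\cC \times_\bM S$ is a closed subscheme of the scheme-theoretic fibre product $\underline{\cC} \times_{\underline{\bM}} \underline{S}$; I will argue below that the two coincide, which yields the asserted scheme structure with $\mu_i$ replaced by $m(s)^{r_i} u_i(s)$.

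The computation is local. Away from the nodes $R_1, R_2$ and the marked point $x_1$, the log structure of $\cC$ is pulled back from $\bM$, so the pushout of ghost monoids is $\NN^2 \oplus_{\NN^2} \NN\boldm = \NN\boldm$, giving $\cM_{\cC_S} = \coprod_{a} \boldm^a \cO^\times$. At $x_1$, where $\overline{\cM}_{\cC, x_1}$ is freely generated by $\bz', \bmu_1, \bmu_2$ with $\bmu_i$ pulled back, the pushout is the free monoid $\NN\bz' \oplus \NN\boldm$ (already fs); this gives $\cM_{\cC_S, x_1} = \coprod_{a,b} \bz'^a \boldm^b \cO^\times$ with $\alpha(\bz') = z_{12}-1$, and exhibits $\overline{\cM}_{\cC_S, x_1} \cong \overline{\cM}_S \oplus \NN$, as needed for a marked point.

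The key case is the node $R_i$, where $\overline{\cM}_{\cC, R_i}$ is freely generated by $\bz_{i1}, \bz_{i2}, \bmu_j$ ($j\neq i$) with $\bmu_i = \bz_{i1} + \bz_{i2}$, and the two structure maps impose $\bmu_j = r_j\boldm$ and $\bz_{i1} + \bz_{i2} = r_i \boldm$. I would first check that the pushout monoid $M = \langle\, \bz_{i1}, \bz_{i2}, \boldm \mid \bz_{i1} + \bz_{i2} = r_i \boldm \,\rangle$, formed in all commutative monoids, is already fine and saturated: one verifies that $M$ injects into $M^{\mathrm{gp}} \cong \ZZ^2$, with image the lattice points of the rational cone spanned by $\bz_{i1} = (1,0)$ and $\bz_{i2} = (-1, r_i)$ (note that $\boldm = \tfrac{1}{r_i}(\bz_{i1} + \bz_{i2})$ lies in the relative interior of this cone). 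Since $M$ is already fs, the fine, saturated and scheme-theoretic fibre products all agree near $R_i$, so the underlying scheme is $\{z_{i1}z_{i2} = m(s)^{r_i} u_i(s)\}$, as in Notation \ref{notation_curve}. Reducing the $\boldm$-exponent modulo $r_i$ then produces the normal form: every element is uniquely $\bz_{i1}^a \bz_{i2}^b \boldm^c$ with $a, b \ge 0$ and $0 \le c < r_i$, which is precisely the translation of $\bz_{i1}\bz_{i2} = \boldm^{r_i} u_i(s)$ into the statement, since $\boldm^{r_i}$ may always be traded for $\bz_{i1}\bz_{i2}$ up to the unit $u_i(s)$. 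Moreover $\overline{\cM}_{\cC_S, R_i} \cong \overline{\cM}_S \oplus_\NN \NN^2$ with the copy of $\NN$ mapping to $\overline{\cM}_S = \NN\boldm$ by $1 \mapsto r_i$, identifying $\rho_{R_i} = r_i$ and verifying the node axiom of a pre-stable log curve.

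Finally, I would read off the structure homomorphism and the generization maps from the same local pictures: $\boldm$ is globally the image of $\boldm \in \cM_S$; at $R_i$ the relation $\bz_{i1}\bz_{i2} = \boldm^{r_i} u_i(s)$ and the generizations $\bz_{ij} \mapsto z_{ij}$, $\bz_{ij'} \mapsto \boldm^{r_i} u_i(s) z_{ij}^{-1}$ follow from those of $\cC$ over $\bM$ after substituting $\mu_i = m(s)^{r_i} u_i(s)$; and $\bz'$ generizes to $z_{12}-1$ as before. Assembling these shows that $\cC_S$ is fs and that $\cC_S/S$ with the induced section $\tx_1$ meets all three conditions for a $1$-marked pre-stable log curve. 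The main obstacle is exactly this node computation: one must ensure that passing to the fs fibre product does not alter the underlying scheme, and the cleanest route — as above — is to prove directly that the one-relation pushout monoid $M$ is already saturated, so that no further base change is forced by $\mathrm{fs}$-ification.
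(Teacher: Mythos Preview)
Your proposal is correct and follows essentially the same route as the paper: both reduce the lemma to the single nontrivial point that the pushout of ghost monoids at the node $R_i$ is already fine and saturated, and both verify this by exhibiting the pushout as an explicit submonoid of $\ZZ^2$ (the paper uses generators $(r_i,0),(0,r_i),(1,1)$, you use $(1,0),(-1,r_i),(0,1)$; these are related by the change of basis $(x,y)\mapsto(r_ix+y,y)$). One small imprecision: your remark that the underlying scheme of the fs fibre product is \emph{a priori} a closed subscheme of the scheme-theoretic fibre product is not quite right in general, since saturation can introduce a finite cover rather than a closed immersion---but this is immaterial here precisely because you go on to show the pushout is already fs, so no saturation step occurs.
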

\begin{proof}
One thing that is not so obvious is that the fiber product in the category of log schemes
is a fs log scheme in this case, 
but this is a known fact in the study of moduli of log curves. 
In fact, the ghost sheaf at $R_1$ is the pushout 
\[\overline{\shM}_S\oplus_{\overline{\shM}_{\bM}} \overline{\shM}_{\cC, R_1}
\cong \NN\oplus_{\NN^2}\NN^3
\]
defined from 
$\NN^2\to\NN; (a, b)\mapsto r_1 a +r_2 b$ 
and $\NN^2\to\NN^3; (a, b)\mapsto (a, a, b)$. 
This is isomorphic to the submonoid of $\NN^2$ 
generated by $(r_1, 0)$, $(0, r_1)$ and $(1, 1)$ 
(corresponding to $\overline{\bz}_{11}, \overline{\bz}_{12}$ and $\overline{\boldm}$, 
respectively). 
\end{proof}

\begin{Notation}\label{notation_central_map}
Let $(w_1, w_2, \dots, w_N)$ be \'etale coordinates 
on an affine open neighborhood $W$ of $P$ in $X$ 
such that $w_1=0$ defines $D$ in $W$. 

Let $f^{(i)}: \PP^1\to Z_i\subset X$ be the normalization map. 
We may assume that $\infty \not\in (f^{(i)})^{-1}(W)$. 

For $i=1$ or $2$, 
we identify the domain of $f^{(i)}$ with $C_i$, 
and by extending to $C_0$ by a constant map, 
we obtain a stable map $f_0: (C, x_1)\to X$. 
Denote $f_0^{-1}(W)$ by $V$, 
and then we have $C_0\subset V$ (and in particular $R_1, R_2\in V$) 
and $x_2, x_5\not\in V$. 
We define $A_i(z_{i1})$ and $B_{ji}(z_{i1})$, $i=1, 2$, by 
\begin{eqnarray*}
w_1 & \mapsto & z_{i1}^{d_i} A_i(z_{i1}), 
\qquad A_i(z_{i1})\in\cO_{C_i}(V\cap C_i)^\times, \\
w_j & \mapsto & B_{ji}(z_{i1}), 
\qquad B_{ji}(z_{i1})\in\cO_{C_i}(V\cap C_i), \qquad 2\leq j\leq N. 
\end{eqnarray*}
By the assumptions, we have $B_{j1}(0)=B_{j2}(0)$. 
We write 
\begin{eqnarray*}
V_0 & = & U_1\cap U_2, \\
V_1 & = & V\cap(U_1\setminus x_1), \\
V_2 & = & V\cap(U_2\setminus x_1), 
\end{eqnarray*}
and denote the corresponding open subspaces of $\cC$ 
by $\cV$ and $\cV_i$. 
\end{Notation}

\subsection{Calculation}

Recall that $\barM_\beta=\barM_\beta(X, D)$ is the moduli stack of 
maximally tangent genus $0$ basic stable log maps of class $\beta$ 
to the log scheme associated to $(X, D)$. 

We consider $S=S_n:=\Spec \CC[s]/(s^{n+1})$. 
This will be sufficient 
since the tangent space to $\barM_\beta$ 
is at most $1$-dimensional 
as we will see later. 
(If fact, 
most of what we state below generalize 
to the spectrum of any local finite-dimensional $\CC$-algebra.)

When referring to a base change of something to $S$, 
we sometimes drop $S$, 
e.g.\ we write just $\cV_i$ for $(\cV_i)_S$. 

\begin{Lemma}\label{lem_description_1}
Let $(\cC_S/S, \{\tx_1\}, f)$ be a basic stable log map over $S$ 
such that the cycle theoretic image of the central fiber 
is $Z_1+Z_2$. 

(1)
One can identify the restriction $\underline{f}\big{|}_0$ to the central fiber 
with $f_0$ constructed in Notation \ref{notation_central_map}. 
There exist $m(s)=\sum_{k=1}^n m^{(k)}s^k$ and 
$u_i(s)=\sum_{k=0}^n u_i^{(k)}s^k$ 
such that $\cC_S/S$ is isomorphic to 
$\cC_{n, m(s), (e_2, u_1(s)), (e_1, u_2(s))}/S_{n, m(s)}$. 
(Note that $e_2$ comes first.)

(2)
Via an isomorphism as in (1), 
the log map $f$ gives rise to 
\begin{eqnarray*}
\tilde{A}_i 
 & \in & \cO_{\cC_S}(\cV_i)^\times, \qquad i=1, 2 \\
\tilde{B}_{2i}, \dots, \tilde{B}_{Ni}
 & \in & \cO_{\cC_S}(\cV_i), \qquad i=1, 2, 
\end{eqnarray*}
with $z_{i1}^{d_i}\tilde{A}_i \equiv z_{i1}^{d_i}A_i \mod (s)$ 
and $\tilde{B}_{ji} \equiv B_{ji} \mod (s)$,  
characterized by the condition that 
\begin{eqnarray*}
(f|_{\cV_i})^{\flat}\bw_1 & = & \bz_{i1}^{d_i}\tilde{A}_i, \\
(\underline{f}|_{\cV_i})^*w_j & = & \tilde{B}_{j i} \qquad j=2, \dots, N. 
\end{eqnarray*}
Here, $\bw_1$ is $w_1$ considered as a local section of $\shM_X$. 

Conversely, such data uniquely determines $f$ 
(under the conditions described in the next lemma). 
\end{Lemma}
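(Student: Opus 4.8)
The plan is to split the statement into the identification of the domain log curve (part (1)) and the description of the log map (part (2)), reading off in each case the required data from the ghost sheaves via Corollary~\ref{cor_configuration}. For part (1), note first that the central fiber of $(\cC_S/S,\{\tx_1\},f)$ is a basic stable log map over $\Spec\CC$ whose underlying stable map has cycle-theoretic image $Z_1+Z_2$; by Corollary~\ref{cor_configuration}(1) its domain is the chain $C_1\cup C_0\cup C_2$ with $C_0$ collapsed to $P$ and $f|_{C_i}$ birational onto $Z_i$. After decorating the domain with the rigidifying markings $x_2,\dots,x_5$ of Notation~\ref{notation_curve}, the resulting $5$-pointed curve is automorphism-free, so $\bM$ is universal and one may fix an isomorphism identifying $\underline{f}|_0$ with $f_0$. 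The underlying family $\underline{\cC_S}$ over $S_n$ is then a deformation of the chain, hence pulled back along a unique morphism $S_n\to\bM$, i.e.\ each $\mu_i$ becomes an element of $(s)\subset\CC[s]/(s^{n+1})$. To pin down the log enhancement I would invoke basicness: by Corollary~\ref{cor_configuration}(3), $\overline{\shM}_S\cong\NN$ with generator $\boldm$ and $\rho_{R_1}=e_2$, $\rho_{R_2}=e_1$, so writing $m(s)=\alpha(\boldm)$ the node $R_i$ is log-smoothed with $\mu_i=\alpha(\bmu_i)=m(s)^{\rho_{R_i}}u_i(s)$ for a unit $u_i(s)$, and $m(s)\in s\,\CC[s]/(s^{n+1})$ since the central fiber stays nodal. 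Matching against the explicit log structure of Notation~\ref{notation_curve_family} gives $\cC_S\cong\cC_{n,m(s),(e_2,u_1(s)),(e_1,u_2(s))}$, with $e_2$ first because $r_1=\rho_{R_1}=e_2$.

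For part (2) I would fix such an isomorphism and restrict $f$ to $\cV_i$. The scheme morphism $\underline{f}|_{\cV_i}$ pulls back $w_2,\dots,w_N$ to functions $\tilde B_{ji}\in\cO_{\cC_S}(\cV_i)$ which, by the expansion of Lemma~\ref{lem_expansion}, reduce to $B_{ji}$ modulo $(s)$. The real content is the pullback of $\bw_1$. Writing $(f|_{\cV_i})^{\flat}\bw_1=\bz_{i1}^{a}\bz_{i2}^{b}\boldm^{c}\cdot(\text{unit})$ near $R_i$, I would compute its two generizations. Since $f(\eta_i)\notin D$, the map $\varphi_{\eta_i}$ is trivial and the ghost must vanish at $\eta_i$; as $\bz_{i2}$ and $\boldm$ generize there to positive multiples of $\overline{\boldm}$ while $\bz_{i1}$ generizes to $0$, this forces $b=c=0$. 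Since $f(\eta_0)=P\in D$ with $\varphi_{\eta_0}(1)=d_i\rho_{R_i}=d_ie_{3-i}$ by Corollary~\ref{cor_configuration}(2), generizing $\bz_{i1}^{a}$ to $\eta_0$ gives $a\rho_{R_i}=d_i\rho_{R_i}$, hence $a=d_i$. Thus $(f|_{\cV_i})^{\flat}\bw_1=\bz_{i1}^{d_i}\tilde A_i$ for a unit $\tilde A_i$, and applying $\alpha$ yields $(\underline{f}|_{\cV_i})^{*}w_1=z_{i1}^{d_i}\tilde A_i$ with $\tilde A_i\equiv A_i$ modulo $(s)$. I regard this as the key point: it is the log enhancement that forces $w_1\circ\underline f$ to be $z_{i1}^{d_i}$ times a unit, i.e.\ that maximal tangency is preserved throughout the family, something an ordinary deformation need not do.

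For the converse I would observe that the data $(m(s),u_i(s))$ together with $(\tilde A_i,\tilde B_{ji})$ reconstructs $\underline{f}|_{\cV_i}$ by $w_1\mapsto z_{i1}^{d_i}\tilde A_i$, $w_j\mapsto\tilde B_{ji}$, and reconstructs the log map by $\bw_1\mapsto\bz_{i1}^{d_i}\tilde A_i$, the assignment being forced at the ghost level by the computation above, whence uniqueness. The consistency of these local pieces across $\cV_0$ and over $\tx_1$, and their extension to $C_i$ away from $W$, are exactly the compatibilities deferred to the next lemma, so here I would claim only the local description and its uniqueness. I expect the main obstacle to be the ghost-sheaf bookkeeping in part (2): ruling out a spurious $\boldm$-factor and pinning the exponent to exactly $d_i$ rest on combining basicness ($\rho_{R_i}=e_{3-i}$) with the type computation of Corollary~\ref{cor_configuration}(2), and getting the generization maps right is where care is needed.
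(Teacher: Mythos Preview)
Your proposal is correct and follows essentially the same route as the paper: Corollary~\ref{cor_configuration} for the structure of the central fiber and the ghost monoid, rigidifying with extra markings to invoke the universal property of $\bM$ (the paper phrases this as ``modulo adding $4$ sections and then subtracting'' and cites \cite{K00}), and reading off the form $\bz_{i1}^{d_i}\tilde A_i$ from the type data. Your ghost-sheaf computation in part (2) is in fact more explicit than the paper's, which simply cites Corollary~\ref{cor_configuration}(2) for the shape of $(f|_{\cV_i})^\flat\bw_1$.

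One small gap in your converse: you establish that the data determines $\underline{f}|_{\cV_i}$ and then defer ``extension to $C_i$ away from $W$'' to the next lemma, but Lemma~\ref{lem_conditions} concerns \emph{existence} (when does such data define a log map), not uniqueness. The paper closes uniqueness here by two observations: first, since $(w_1,\dots,w_N)$ are \'etale coordinates on $W$, the functions $z_{i1}^{d_i}\tilde A_i$ and $\tilde B_{ji}$ determine $\underline{f}|_{\cV_S}$; second, if $j:V\hookrightarrow C$ is the inclusion then $\cO_{\cC_S}\to j_*\cO_{\cV_S}$ is injective (since $V$ is dense in the reduced curve $C$ and $\cC_S$ is $S$-flat), so knowing $\underline{f}$ on $\cV_S$ determines it on all of $\cC_S$. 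This is the step that upgrades your local uniqueness to the global uniqueness asserted in the lemma.
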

\begin{proof}
(1)
By Corollary \ref{cor_configuration} (1), 
we can identify $\underline{f}\big{|}_0$ with $f_0$. 
By Corollary \ref{cor_configuration} (3), 
$\shM_{S, 0}\cong \NN$ and hence 
$S$ is isomorphic to $S_{n, m(s)}$ for some $m(s)$. 

Since the central fiber has $3$ components, by \cite{K00}, 
(modulo adding $4$ sections and then subtracting), 
$\cC_S$ is induced from some log morphism $S\to \bM$. 
By Corollary \ref{cor_configuration} (3), 
$\bmu_1$ maps to $e_2$ and $\bmu_2$ maps to $e_1$ 
at the level of ghost sheaves. 
Thus $S\to \bM$ is of the asserted form.

(2)
By Corollary \ref{cor_configuration} (2), 
$(f|_{\cV_i})^{\flat}\bw_1$ is of the form as above. 

For the converse: 
By the compatibility of $f^*$ and $f^{\flat}$, 
we have 
$(\underline{f}|_{\cV_i})^*w_1 = z_{i1}^{d_i}\tilde{A}_i$. 
Since $w_1, \dots, w_N$ are \'etale coordinates on $W$, 
this determines $f|_{\cV_S}$. 
If $j: V\to C$ denotes the inclusion map, 
then $\cO_{\cC_S}\to j_*\cO_{\cV_S}$ is injective, 
and therefore 
$\tilde{A}_i , \tilde{B}_{2i}, \dots, \tilde{B}_{Ni}$ 
determine $f$. 
\end{proof}

By Lemma \ref{lem_expansion}, 
we can expand $\tilde{A}_i$ and $\tilde{B}_i$ as 
\begin{eqnarray*}
\tilde{A}_i & = & \sum_{k=0}^n s^k\left\{
A_{i0}^{(k)}+z_{i1}A_{i1}^{(k)}(z_{i1})+z_{i2}A_{i2}^{(k)}(z_{i2})
\right\} \\
\tilde{B}_{j i} & = & \sum_{k=0}^n s^k\left\{
B_{ji0}^{(k)}+z_{i1}B_{ji1}^{(k)}(z_{i1})+z_{i2}B_{ji2}^{(k)}(z_{i2})
\right\} 
\end{eqnarray*}
with 
\begin{eqnarray*}
A_{i0}^{(k)}, B_{ji0}^{(k)}& \in & \CC, \\
A_{i1}^{(k)}(z_{i1}), B_{ji1}^{(k)}(z_{i1})
 & \in & \cO_{C_i}(V_i\cap C_i)=\cO_{C_i}(V\cap C_i), \\
A_{i2}^{(k)}(z_{i2}), B_{ji2}^{(k)}(z_{i2})
 & \in & \cO_{C_0}(V_i\cap C_0)=\cO_{C_0}((U_i\cap C_0)\setminus x_1). 
\end{eqnarray*}

Let us write down the conditions for these data to actually give a log map. 

\begin{Lemma}\label{lem_conditions}
The above data give a family in $\barM_\beta$ 
if and only if the following hold. 
\begin{enumerate}
\item[(a)]\label{bw1_on_V0}
($\bw_1$ on $\cV_0$)\qquad
$z_{12}^{-d_1}u_1(s)^{d_1}\tilde{A}_1=z_{22}^{-d_2}u_2(s)^{d_2}\tilde{A}_2$ 
on $\cV_0\setminus x_1$, 
and it extends to a function on $\cV_0$ 
with vanishing order $d_1+d_2$ along $\tx_1$. 
\item[(b)]\label{wj_on_V0}
($w_2, \dots, w_N$ on $\cV_0$)\qquad
For $j=2, \dots, N$, 
$\tilde{B}_{j1}|_{\cV_0\setminus x_1}=\tilde{B}_{j2}|_{\cV_0\setminus x_1}$, 
and it extends to a regular function at $x_1$. 
\item[(c)]\label{wj_on_X1X2}
($w_1, \dots, w_N$ on $\cC_1^\circ$, $\cC_2^\circ$)\qquad
The morphism $\cV\cap \cC_i^\circ\to S$ given by 
$z_{i1}^{d_i}\tilde{A}_i$ and $\tilde{B}_{2i}, \dots, \tilde{B}_{Ni}$ 
extends to a morphism $\cC_i^\circ\to X$. 
\end{enumerate}
\end{Lemma}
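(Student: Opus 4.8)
By Lemma \ref{lem_description_1}, prescribing a basic stable log map $f$ over $S$ with the given central fiber is the same as prescribing the local data $\tilde{A}_i\in\cO_{\cC_S}(\cV_i)^\times$ and $\tilde{B}_{ji}\in\cO_{\cC_S}(\cV_i)$, which determine $f$ on the node-neighborhoods $\cV_i$ through $(f|_{\cV_i})^\flat\bw_1=\bz_{i1}^{d_i}\tilde{A}_i$ and $(\underline{f}|_{\cV_i})^*w_j=\tilde{B}_{ji}$. The plan is to pin down exactly when these locally defined maps assemble into a global log morphism $\cC_S\to X$. Since the log structure of $\cC_S$ is concentrated along the nodes $R_i$ and the marked point $x_1$ (elsewhere it is pulled back from $S$ and $f$ is an ordinary scheme map), I would organize the verification along the cover $\cC_S=\cV_1\cup\cV_2\cup\cV_0\cup\cC_1^\circ\cup\cC_2^\circ$, where $\cV_1,\cV_2$ surround the nodes, $\cV_0$ is the interior of the collapsed component $C_0$ (and is the only chart containing $x_1$), and $\cC_1^\circ,\cC_2^\circ$ are the non-collapsed components minus their nodes. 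Conditions (a), (b), (c) are precisely the gluing and extension requirements on these pieces, and I will prove both necessity (by restricting a genuine $f$) and sufficiency (by gluing from the data).

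\textbf{Gluing over $\cV_0$: conditions (a) and (b).} The overlap $\cV_1\cap\cV_2$ is the punctured interior $\cV_0\setminus x_1$, so the content here is that the two chart descriptions of $f$ agree there and extend across $x_1$. For the $\shM_X$-section $\bw_1$ I would rewrite $\bz_{i1}^{d_i}\tilde{A}_i$ in interior coordinates using $\bz_{i1}\bz_{i2}=\boldm^{r_i}u_i(s)$ with $(r_1,r_2)=(e_2,e_1)$ from Lemma \ref{lem_description_1}(1) and the patching $z_{12}=1/z_{22}$. Since $d_1 e_2=d_2 e_1=d\,e_1 e_2$, both sides carry the common factor $\boldm^{d e_1 e_2}$, which cancels, and uniqueness of the unit part in $\coprod_a\boldm^a\cO^\times$ turns equality of the two sections of $\shM_X$ into the stated identity $z_{12}^{-d_1}u_1(s)^{d_1}\tilde{A}_1=z_{22}^{-d_2}u_2(s)^{d_2}\tilde{A}_2$. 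The requirement that this common function extend over $x_1$ with vanishing order $d_1+d_2$ along $\tx_1$ is then exactly the maximal-tangency condition at the marked point: using $\alpha(\bz')=z_{12}-1$ as the local equation of $\tx_1$, the contact order $w=d_1+d_2$ is the $\bz'$-order of $f^\flat\bw_1$ at $x_1$. The remaining coordinates $w_2,\dots,w_N$ lie in $\cO_X$ and are transverse to $D$, so there is no log subtlety: agreement on $\cV_0\setminus x_1$ is just equality of two regular functions, and the collapsing of $C_0$ to $P$ forces their regular extension across $x_1$, which is condition (b).

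\textbf{Extension over the outer components: condition (c).} Away from the nodes and $x_1$ the log structure of $\cC_S$ is pulled back from $S$, so on $\cC_i^\circ$ the map $f$ is an ordinary morphism of schemes. The data $\tilde{A}_i,\tilde{B}_{ji}$ only describe $f$ on the node-neighborhood $\cV\cap\cC_i^\circ$ inside $f_0^{-1}(W)$; condition (c) is precisely the requirement that the functions $z_{i1}^{d_i}\tilde{A}_i$ and $\tilde{B}_{2i},\dots,\tilde{B}_{Ni}$ extend to a morphism $\cC_i^\circ\to X$, which on the central fiber recovers the normalization $f^{(i)}\colon C_i\to Z_i$. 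Together with the gluing over $\cV_0$ this yields a morphism on all of $\cC_S$; conversely, restricting any log map to these opens returns (a), (b), (c), giving the equivalence.

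\textbf{Main obstacle.} I expect the delicate point to be the log-level bookkeeping in (a): the identity must be imposed on sections of $\shM_{\cC_S}$, not merely on their images under $\alpha$, so one must track the relations $\bz_{i1}\bz_{i2}=\boldm^{r_i}u_i(s)$ and the patching $z_{12}=1/z_{22}$ faithfully and invoke uniqueness of the unit component of the monoid. Because $C_0$ is collapsed, maximal tangency at $x_1$ is invisible on the underlying curve and must be extracted from the $\bz'$-order of $f^\flat\bw_1$ after dividing out the nilpotent factor $\boldm^{d e_1 e_2}$; confirming the claimed vanishing order $d_1+d_2$ along $\tx_1$ for the surviving unit part is where I would be most careful.
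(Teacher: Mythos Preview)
Your proposal is correct and follows essentially the same approach as the paper: both rewrite $\bz_{i1}^{d_i}\tilde{A}_i$ on $\cV_0$ via the relation $\bz_{i1}\bz_{i2}=\boldm^{r_i}u_i(s)$, cancel the common factor $\boldm^{de_1e_2}$ to obtain condition~(a), read off the vanishing order $d_1+d_2$ at $\tx_1$ from maximal tangency, and treat (b), (c) as the evident scheme-theoretic gluing/extension conditions. Your identification of the ``main obstacle'' (working in $\shM_{\cC_S}$ rather than in $\cO_{\cC_S}$ via $\alpha$) is exactly the point the paper is careful about.
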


\begin{proof}
We first show that the conditions are necessary:

For (a), on $\cV_0$ we have 
$\bz_{11}=\boldm^{e_2}z_{12}^{-1}u_1(s)$ and $\bz_{21}=\boldm^{e_1}z_{22}^{-1}u_2(s)$, 
and so 
\begin{eqnarray*}
\bz_{11}^{d_1}\tilde{A}_1 & = & \boldm^{de_1e_2}z_{12}^{-d_1}u_1(s)^{d_1}\tilde{A}_1,  \\
\bz_{21}^{d_2}\tilde{A}_2 & = & \boldm^{de_1e_2}z_{22}^{-d_2}u_2(s)^{d_2}\tilde{A}_2. 
\end{eqnarray*}
These must be equal, so we have the first assertion. 
At $x_1$ it is equal to $\boldm^{de_1e_2}(\bz')^{d_1+d_2}\cdot(\hbox{unit})$, hence the second assertion. 
It is obvious that (b) and (c) must hold in order to define $\underline{f}$.

The conditions are also sufficient:
From (a), by the equalities above we have 
$z_{11}^{d_1}\tilde{A}_1 = z_{21}^{d_2}\tilde{A}_2$ on $\cV_0\setminus x_1$, 
and it extends to $x_1$. 
Together with (b) and (c), these data give a stable map $\underline{f}$. 
By (a), we can lift it to a log map 
by sending $\bw_1$ to $\bz_{11}^{d_1}\tilde{A}_1$, 
$\bz_{21}^{d_2}\tilde{A}_2$ and $\boldm^{de_1e_2}(\bz')^{d_1+d_2}\cdot(\hbox{unit})$ 
at $R_1$, $R_2$ and $x_1$, respectively. 
\end{proof}

Let 
$c_0$ be a $d_2$-th roots 
of $(-1)^{d_1+d_2} A_1(0)/A_2(0)$ 
and set $c_p:=e^{2\pi p \sqrt{-1} /d_2}c_0$ for $p=1, \dots, d_2-1$. 
The following theorem proves Theorem \ref{theorem_main}. 

\begin{Theorem}\label{thm:412}
Let $(X, D)$ and $Z_i$ be as in Assumption \ref{assump_pair} 
and $f^{(i)}, A_i, B_{ji}$ be as in Notation \ref{notation_central_map}. 
For $n<\min\{e_1, e_2\}$, 
let $S_{n, s}$ 
and $\cC_{n, p}=\cC_{n, s, (e_2, 1), (e_1, c_p)}$ 
be as defined in Notation \ref{notation_curve_family}. 
\begin{enumerate}
\item
The underlying pre-stable curve of $\cC_{n, p}/S_{n, s}$ is the trivial family. 
\item
The functions 
\begin{eqnarray*}
\tilde{A}_i & = & A_i(z_{i1}) + A_i(0)\left\{(1-z_{i2})^{d_1+d_2}-1\right\}, \\
\tilde{B}_{j, i} & = & B_{j, i}(z_{i1})
\end{eqnarray*}
define a stable log map $f_{n, p}: \cC_{n, p}\to X$, 
whose underlying stable map is the trivial family.
\item
The stable log maps $f_{0, p}$ and $f_{0, p'}$ are isomorphic 
if and only if $p\equiv p'\pmod{d}$. 
Each $f_{0, p}$ has no nontrivial automorphisms. 
\item
The stable log map $f_{n, p}$ gives a closed immersion 
of $S_{n, s}$ into $\barM_\beta$. 
\item
The stable log maps $f_{0, p}$ exhaust the maps whose image cycles are $Z_1+Z_2$. 
\item
Under the assumptions of Theorem \ref{theorem_main}, 
the stable log map $f_{\min\{e_1, e_2\}-1, p}$ gives an isomorphism 
of $S_{\min\{e_1, e_2\}-1}$ with a connected component. 
\end{enumerate}
\end{Theorem}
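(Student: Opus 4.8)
The plan is to establish parts (1)--(5) by explicit computation and then combine them with a tangent-space and obstruction analysis for (6). For (1), since $n<\min\{e_1,e_2\}$ we have $s^{e_1}=s^{e_2}=0$ in $\CC[s]/(s^{n+1})$, so $\mu_1=s^{e_2}=0$ and $\mu_2=s^{e_1}c_p=0$; the node relations $z_{i1}z_{i2}=\mu_i$ are then independent of $s$, so by the description of $\cC_{n,m(s),\dots}$ the underlying curve is $C\times S_n$. For (2) I would verify conditions (a)--(c) of Lemma \ref{lem_conditions}. Condition (c) is immediate because on $C_i$ (where $z_{i2}=0$) one has $\tilde A_i=A_i(z_{i1})$ and $\tilde B_{ji}=B_{ji}(z_{i1})$, the pullbacks of the $w$-coordinates along $f^{(i)}:\PP^1\to Z_i$, which extend over $C_i^\circ$. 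Condition (b) holds because on $C_0$ (where $z_{i1}=0$) the $\tilde B_{ji}$ reduce to the constants $B_{ji}(0)$, and $B_{j1}(0)=B_{j2}(0)$ by Notation \ref{notation_central_map}. The substantive point is (a): on $\cV_0$ one has $\tilde A_i|_{C_0}=A_i(0)(1-z_{i2})^{d_1+d_2}$, and substituting $u_1=1$, $u_2=c_p$, $z_{22}=1/z_{12}$ shows the two expressions in (a) agree precisely when $c_p^{d_2}=(-1)^{d_1+d_2}A_1(0)/A_2(0)$, the defining property of $c_p$; the common value is a unit on $\cV_0\setminus x_1$ vanishing to order $d_1+d_2$ at $x_1$.

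For (3) I would argue that an isomorphism $f_{0,p}\cong f_{0,p'}$ induces the identity on the underlying map $f_0$: since $f^{(i)}$ is birational and $Z_1\neq Z_2$, the non-collapsed components cannot be exchanged and admit no nontrivial automorphism commuting with $f$, while the collapsed $C_0$ is rigid once $R_1,R_2,x_1$ are fixed. Hence the isomorphism reduces to an automorphism $\boldm\mapsto v\boldm$, $v\in\CC^\times$, of the standard log point $S_{0,s}$, transforming $(u_1,u_2)=(1,c_p)$ into $(v^{e_2},v^{e_1}c_p)$. Restoring $u_1=1$ forces $v^{e_2}=1$ and then $c_p\mapsto v^{e_1}c_p$; since $\gcd(e_1,e_2)=1$, $v^{e_1}$ ranges over all $e_2$-th roots of unity, so $f_{0,p}\cong f_{0,p'}$ exactly when $c_p/c_{p'}$ is an $e_2$-th root of unity, i.e.\ $p\equiv p'\pmod d$; the same computation gives $\mathrm{Aut}(f_{0,p})=1$ since $v^{e_1}=v^{e_2}=1$ forces $v=1$. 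Part (5) then follows: by Corollary \ref{cor_configuration} and Lemma \ref{lem_description_1} every log map with image cycle $Z_1+Z_2$ is, over the log point, of the form $\cC_{0,\ast,(e_2,u_1),(e_1,u_2)}$ with data constrained by (a) to $u_2^{d_2}/u_1^{d_1}=(-1)^{d_1+d_2}A_1(0)/A_2(0)$, so after normalization it is one of the $f_{0,p}$.

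Parts (4) and (6) require the infinitesimal analysis. First I would prove that the tangent space of $\barM_\beta$ at $[f_{0,p}]$ is at most one-dimensional: a first-order log deformation is recorded by $m^{(1)}s$ together with the first-order expansion coefficients of the $\tilde A_i,\tilde B_{ji}$, and conditions (a)--(c) at first order force all of these to be determined by $m^{(1)}$ (up to the automorphisms of (3)), so the only modulus is the smoothing parameter. Given this, the family $f_{n,p}$ smooths nontrivially at first order, so the classifying morphism $S_{n,s}\to\barM_\beta$ is injective on tangent vectors; since $\mathrm{Aut}(f_{0,p})$ is trivial it is representable near $[f_{0,p}]$, and because $m(s)=s$ exactly, the induced map on complete local rings surjects onto $\CC[s]/(s^{n+1})$, whence a closed immersion, proving (4).

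Finally, for (6) I would carry conditions (a)--(c) to higher order and locate the obstruction. Writing (without loss of generality) $\min\{e_1,e_2\}=e_1\le e_2$, everything extends while $s^{e_1}=0$, i.e.\ over $S_{e_1-1}$, since both nodes stay rigid and the matching in (a) persists order by order. At order $e_1$ one has $\mu_2=s^{e_1}c_p\neq 0$, so the node $R_2$ begins to smooth, and extending the map across the smoothing imposes a compatibility between the two branches of the image at $P$; this is where $(Z_1.Z_2)_P=\min\{d_1,d_2\}$ enters, forcing the order-$e_1$ obstruction to be a nonzero multiple of the branch discrepancy and hence nonvanishing. Thus no extension over $S_{e_1}$ exists, and combining this with one-dimensionality of the tangent space, the closed immersion of (4), the exhaustion of (5), and the automorphism count of (3), the formal neighborhood of each $[f_{0,p}]$ in $\barM_\beta$ is exactly $\Spec\CC[s]/(s^{\min\{e_1,e_2\}})$, an isolated connected component of length $\min\{e_1,e_2\}$; there are $d$ of them, giving total contribution $d\cdot\min\{e_1,e_2\}=\min\{d_1,d_2\}$. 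The main obstacle is precisely this last step: writing the order-$\min\{e_1,e_2\}$ obstruction class explicitly and proving that the intersection-theoretic genericity hypothesis makes it nonzero, since that is what pins the \emph{length} of the component rather than merely bounding it.
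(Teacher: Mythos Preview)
Your treatment of (1)--(4) matches the paper's proof essentially line for line. For (5) the paper does a bit more than your sketch: beyond the constraint on $u_2$, one must show that the functions $\tilde A_i$ themselves are forced. The paper uses the regularity of $\tilde A_i$ at $x_1$ to see that $A_{i2}^{(0)}$ is a polynomial, then condition (a) bounds its degree and the vanishing-order requirement at $x_1$ pins $A_i(0)+z_{i2}A_{i2}^{(0)}(z_{i2})$ to $A_i(0)(1-z_{i2})^{d_1+d_2}$; only then does the $u_2$ equation drop out.

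For (6) you correctly locate the difficulty, but the paper's mechanism is more concrete than an abstract obstruction computation and differs from your description in a way that matters. The paper does not argue via node-smoothing compatibilities. Instead it interprets the order-$n$ deformation of the underlying map on $C_i^\circ$ as a vector field $\bv_i\in\Gamma(C_i^\circ,(f^{(i)})^*T_X)$, and shows (using the explicit shape of $\bv_i$ at $R_i$) that condition (c) forces $\bv_i$ to extend to a global section of the subsheaf $\cE_i\subset (f^{(i)})^*T_X$ generated by $(f^{(i)})^*T_X(-\log D)$ and $T_{C_i}$. Here hypotheses (2) and (4) of Theorem \ref{theorem_main} are used: immersivity gives an injection $T_{C_i}(-\log R_i)\hookrightarrow (f^{(i)})^*T_X(-\log D)$, and $(K_X+D).\beta_i=0$ makes the cokernel $\cO_{C_i}(-1)$, whence $\Gamma(\cE_i)=\Gamma(T_{C_i})$. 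So each $\bv_i$ is a reparametrization $a_i(z_{i1})\partial_{z_{i1}}$, and evaluating at $R_i$ gives $a_i(0)$ explicitly in terms of $v=m^{(1)}$. Condition (b) then forces $a_1(0)B_1'(0)=B_{10}^{(n)}=B_{20}^{(n)}=a_2(0)B_2'(0)$; at $n=e_1$ this is a linear equation in $v$ whose coefficient is nonzero precisely because $(Z_1.Z_2)_P=\min\{d_1,d_2\}$ translates into $A_1(0)(B_2'(0))^d\neq A_2(0)(B_1'(0))^d$. Hence $v=0$, contradicting the normalization $v=1$ for $n=e_1\geq 2$ and proving nonextension. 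For the tangent-space bound, once $a_i(0)=0$ one untwists by the reparametrization to make the underlying stable map trivially deformed, after which condition (a) collapses to a single linear relation between $u_1^{(n)}$ and $u_2^{(n)}$, leaving $v$ as the only parameter. Thus the obstruction is detected in the $w_2$-direction via the $B_i$, not as a $w_1$-smoothing mismatch, which is why $B_i'(0)\neq 0$ and the intersection hypothesis enter exactly where they do.
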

\begin{proof}
(5) and (6) will be proven in the subsections that follow. 

(1)
Since $e_1, e_2> n$, 
we have $s^{e_1}=s^{e_2}=0$ in $\CC[s]/(s^{n+1})$. 
Thus the underlying pre-stable curve is trivial. 

(2)
We have $z_{i1}z_{i2}=0$ from (1). 
Thus $z_{i1}^{d_i}\tilde{A}_i=z_{i1}^{d_i}A_i(z_{i1})$, 
and these data give a trivial stable map. 
In particular, 
the conditions (b) and (c) of Lemma \ref{lem_conditions} are clearly satisfied. 

Let us check Lemma \ref{lem_conditions} (a). 
We have $u_1(s)=1$ and $u_2(s)=c_p$ and therefore, on $\cV_0$, 
\begin{eqnarray*}
z_{12}^{-d_1}u_1(s)^{d_1}\tilde{A}_1 & = & 
z_{12}^{-d_1}A_1(z_{11}) + 
z_{12}^{-d_1}A_1(0)\left\{(1-z_{12})^{d_1+d_2}-1 \right\} \\
& = & 
z_{12}^{-d_1}A_1(0) + 
z_{12}^{-d_1}A_1(0)\left\{(1-z_{12})^{d_1+d_2}-1 \right\} \\
& = & A_1(0)z_{12}^{-d_1}(1-z_{12})^{d_1+d_2}, \\
\end{eqnarray*}
and, similarly, 
\begin{eqnarray*}
z_{22}^{-d_2}u_2(s)^{d_2}\tilde{A}_2 & = & 
c_p^{d_2}A_2(0)z_{22}^{-d_2}(1-z_{22})^{d_1+d_2}. 
\end{eqnarray*}
Since $c_p^{d_2}=(-1)^{d_1+d_2} A_1(0)/A_2(0)$, 
they are equal. 

Clearly they vanish to order $d_1+d_2$ at $x_1$.

(3)
An isomorphism of $f_{0, p}$ and $f_{0, p'}$ 
is given by a compatible pair 
of isomorphisms $\varphi: \cC_{0, p}\to\cC_{0, p'}$ 
and $\psi: S_{0, s}\to S_{0, s}$. 

The morphism $\underline{\varphi}: \underline{\cC}_{0, p}\to\underline{\cC}_{0, p'}$
underlying $\varphi$ is the identity, 
since $\underline{f}_{0, p}$ and $\underline{f}_{0, p'}$ are stable 
and $Z_1\not=Z_2$ by assumption. 
Therefore there exist functions $f_1(z_{12})$ with $f_1(0)=1$ 
such that $\varphi^\flat\bz_{11}=\bz_{11}f_1(z_{12})$, 
and $f_2(z_{11})$ with $f_2(0)=1$ such that 
$\varphi^\flat\bz_{12}=\bz_{12}f_2(z_{11})$. 

On the other hand, $\psi$ is determined by a nonzero complex number $\gamma$ 
such that $\psi^{\flat}\boldm=\boldm\gamma$. 

Then we have 
\begin{eqnarray*}
\bz_{11}\bz_{12}f_1(z_{12})f_2(z_{11}) 
& = & \varphi^\flat(\bz_{11}\bz_{12}) \\
& = & \psi^\flat(\boldm^{e_2}) \\ 
& = & \boldm^{e_2} \gamma^{e_2}. 
\end{eqnarray*}
Thus $f_1\equiv f_2 \equiv 1$ and $\gamma^{e_2}=1$. 

Similarly, there exist functions $g_1(z_{22})$ with $g_1(0)=1$ 
such that $\varphi^\flat\bz_{21}=\bz_{21}g_1(z_{22})$, 
and $g_2(z_{21})$ with $g_2(0)=1$ such that 
$\varphi^\flat\bz_{22}=\bz_{22}g_2(z_{21})$. 
\begin{eqnarray*}
\bz_{21}\bz_{22}g_1(z_{22})g_2(z_{21}) 
& = & \varphi^\flat(\bz_{21}\bz_{22}) \\
& = & \psi^\flat(\boldm^{e_1}c_{p'}) \\ 
& = & \boldm^{e_1} \gamma^{e_1}c_{p'}. 
\end{eqnarray*}
Thus $g_1\equiv g_2\equiv 1$ and $c_p=\gamma^{e_1}c_{p'}$. 
Thus $c_p/c_{p'}$ is an $e_2$-th root of $1$, 
and $p\equiv p'\pmod{d}$ follows. 
We can easily see that the converse is also true. 

In the case $p=p'$, 
the calculation above shows 
$\varphi^\flat\bz_{ij}=\bz_{ij}$ and $\psi^\flat\boldm=\boldm$, 
and therefore that $f_{0, p}$ has no nontrivial automorphisms. 

(4)
By (3), $\barM_\beta$ is an algebraic space near $[f_{0, p}]$. 
Therefore, the assertion is clear if $n=0$. 
If $n\geq 1$, the truncation $f_{1, p}$ is nontrivial, 
since $\alpha(\boldm)\not=0$. 
Thus $S_n\to\barM_\beta$ has nonzero tangent map, 
and it is a closed immersion by Nakayama's Lemma. 
\end{proof}

\subsection{Central fiber}

Now we prove Theorem \ref{thm:412} (5). 
Thus we set $n=0$ in this subsection. 

We know that the curve 
is as in Lemma \ref{lem_description_1} (1) with $n=0$ 
and the maps are given by data as in Lemma \ref{lem_description_1} (2) 
satisfying the conditions of Lemma \ref{lem_conditions}. 
Here $m(s)=0$ since it is in the maximal ideal. 
Note that by replacing $\boldm$ (which does not affect $m(s)$) 
we may suppose that $u_1=1$.

For the expansions of $\tilde{A}_i$ and $\tilde{B}_{ji}$, the condition of Lemma \ref{lem_description_1} (2) states 
\begin{eqnarray*}
z_{i1}^{d_i}(A_i^{(0)}+z_{i1}A_{i1}^{(0)}(z_{i1})+z_{i2}A_{i2}^{(0)}(z_{i2})) & = & z_{i1}^{d_i}A_i(z_{i1}), \\
B_{ji0}^{(0)}+z_{i1}B_{ji1}^{(0)}(z_{i1})+z_{i2}B_{ji2}^{(0)}(z_{i2}) & = & B_{ji}(z_{i1}) 
\end{eqnarray*}
on $V_i$. 
From the first equality and $z_{i1}z_{i2}=0$, we have
$A_i^{(0)}+z_{i1}A_{i1}^{(0)}(z_{i1}) = A_i(z_{i1})$ 
and in particular $A_i^{(0)}=A_i(0)$. 
Thus it remains to show that 
$A_i(0)+z_{i2}A_{i2}^{(0)}(z_{i2})=A_i(0)(1-z_{i2})^{d_1+d_2}$ 
and that $u_2$ is one of $c_p$, 
the latter being equivalent to  $u_2^{d_2}=(-1)^{d_1+d_2}A_1(0)/A_2(0)$. 

In fact, 
for the function $A_{12}^{(0)}(z_{12})\in \CC[z_{12}, 1/(z_{12}-1)]$, 
Lemma \ref{lem_conditions} (a) says that 
$A_1(0)+z_{12}A_{12}^{(0)}(z_{12})$ is regular also at $z_{12}=1$. 
Therefore $A_{12}^{(0)}(z_{12})$ is in fact a polynomial. 
Similarly for $A_{22}^{(0)}(z_{22})$. 
Restricting to $\cV_0$ so that $z_{11}=0$ and $z_{21}=0$, by Lemma \ref{lem_conditions} (a) we have 
\begin{equation} 
\begin{aligned}
A_1(0)+z_{12}A_{12}^{(0)}(z_{12}) & =  
z_{12}^{d_1}u_1^{-d_1}\cdot z_{22}^{-d_2}u_2^{d_2}(A_2(0)+z_{22}A_{22}^{(0)}(z_{22})) \\
& =  
u_2^{d_2} z_{12}^{d_1+d_2}(A_2(0)+z_{12}^{-1}A_{22}^{(0)}(z_{12}^{-1})), 
\end{aligned}\label{eq1}
\end{equation}
and it follows that this is a polynomial of degree $d_1+d_2$. 
It also must have vanishing order $d_1+d_2$ at $z_{12}=1$, 
and therefore $A_1(0)+z_{12}A_{12}^{(0)}(z_{12})=A_1(0)(1-z_{12})^{d_1+d_2}$. 
By symmetry we also have 
$A_2(0)+z_{22}A_{22}^{(0)}(z_{22}) = A_2(0)(1-z_{22})^{d_1+d_2}$, 
and \eqref{eq1} implies that 
$(-1)^{d_1+d_2}A_1(0) = u_2^{d_2}A_2(0)$. 
(This is basically the same calculation as we did in the proof of (2).)

\subsection{Extending deformations}

Now we make the same assumption as in Theorem \ref{theorem_main}. 
In particular $N=2$, and we denote $B_{2 i}$ by $B_i$ etc. 
We may assume that 
\begin{equation}\label{ineq_b_dash}
B_i'(0)\not=0. 
\end{equation}
In fact, 
since $Z_i$ is smooth at the intersection point $P$, 
this always holds if $d_i>1$. 
If $d_i=1$, this holds after changing coordinates if necessary.

By symmetry, assume that $d_1\leq d_2$. 
Recall that $d_i=de_i$ with $\gcd(e_1, e_2)=1$. 

\begin{Lemma}\label{413}
Theorem \ref{thm:412} (6) (and hence Theorem \ref{theorem_main}) is reduced to 
the following claims. 
\begin{itemize}
\item[(A)]
If $e_1=1$, any extension of $f_{0, p}$ to $S_1$ is trivial. 
\item[(B)] 
If $e_1\geq 2$, 
the set of extensions of $f_{0, p}$ to $S_1$ 
forms an (at most) $1$-dimensional vector space. 
\item[(C)]
For $e_1\geq 2$, 
the family $f_{e_1-1, p}$ cannot be extended 
to a family over $S_{e_1}$. 
\end{itemize}
\end{Lemma}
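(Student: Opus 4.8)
The plan is to read off the complete local ring of $\barM_\beta$ at $[f_{0,p}]$ from (A), (B), (C); everything else is formal one-variable local algebra. Under the standing assumption $d_1\le d_2$ we have $\min\{e_1,e_2\}=e_1$, so the goal is to show that $f_{e_1-1,p}$ identifies $S_{e_1-1}=\Spec\CC[s]/(s^{e_1})$ with a connected component of $\barM_\beta$. By Theorem \ref{thm:412}(3) the map $f_{0,p}$ has no nontrivial automorphisms, so $\barM_\beta$ is an algebraic space near $[f_{0,p}]$; let $R$ denote its complete local ring there, Noetherian since $\barM_\beta$ is locally of finite type. The key translation, furnished by the explicit description of families in Lemmas \ref{lem_description_1} and \ref{lem_conditions}, is that extensions of $f_{0,p}$ to $S_n$ correspond to $\CC$-algebra homomorphisms $R\to\CC[s]/(s^{n+1})$ lifting the augmentation $R\to\CC$; in particular extensions to $S_1$ form the tangent space $\mathrm{Hom}_\CC(\mathfrak m_R/\mathfrak m_R^2,\CC)$, with the trivial extension as origin.

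First I would dispose of the case $e_1=1$. Here claim (A) says every extension of $f_{0,p}$ to $S_1$ is trivial, i.e.\ $\mathfrak m_R/\mathfrak m_R^2=0$; by Nakayama $\mathfrak m_R=0$, so $R=\CC$ and $[f_{0,p}]$ is a reduced isolated point. Thus $f_{0,p}=f_{e_1-1,p}$ identifies $S_0=\Spec\CC$ with a connected component, as desired.

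Next, for $e_1\ge 2$, claim (B) gives $\dim_\CC\mathfrak m_R/\mathfrak m_R^2\le 1$, while Theorem \ref{thm:412}(4) (the nontriviality of $f_{1,p}$) shows the tangent map of $S_1\to\barM_\beta$ is nonzero, so $\dim_\CC\mathfrak m_R/\mathfrak m_R^2=1$. A complete Noetherian local $\CC$-algebra with residue field $\CC$ and one-dimensional cotangent space is a quotient of $\CC[[t]]$, hence $R\cong\CC[[t]]/(t^N)$ with $N\in\{2,3,\dots\}\cup\{\infty\}$. It then remains to determine $N$. The closed immersion $f_{e_1-1,p}\colon S_{e_1-1}\hookrightarrow\barM_\beta$ of Theorem \ref{thm:412}(4) corresponds to a surjection $R\twoheadrightarrow\CC[s]/(s^{e_1})$, which forces $N\ge e_1$. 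On the other hand, claim (C) says this family does not extend to $S_{e_1}$, i.e.\ the surjection does not lift to $R\to\CC[s]/(s^{e_1+1})$; but if $N\ge e_1+1$ one produces such a lift simply by lifting the coefficients of the image of $t$, since then the relation $t^N\mapsto 0$ still holds in $\CC[s]/(s^{e_1+1})$. Hence $N\le e_1$, so $N=e_1$ and $R\cong\CC[s]/(s^{e_1})$ is Artinian. Therefore $[f_{0,p}]$ is isolated with local ring $\CC[s]/(s^{e_1})$, and $f_{e_1-1,p}$ is an isomorphism of $S_{e_1-1}$ onto the connected component it supports, which is Theorem \ref{thm:412}(6).

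The reduction itself is thus routine; the genuine content — and the real obstacle — is entirely in establishing (A), (B), (C), where the log structure of $\barM_\beta$ must be analyzed directly rather than through its underlying algebraic space. Within the reduction the one point demanding care is the bridge asserted in the first paragraph: that extensions to $S_{n,s}$, with their prescribed basic log structure $m(s)=s$, are faithfully recorded by homomorphisms out of $R$, so that the scheme-theoretic closed-immersion and non-extension statements of Theorem \ref{thm:412}(4) and claim (C) convert into the numerical bound $N=e_1$. I would verify this compatibility using the normal form of Lemma \ref{lem_description_1} before running the local-algebra argument above.
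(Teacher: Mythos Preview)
Your proof is correct and follows essentially the same approach as the paper: both use the absence of automorphisms to work in an algebraic space, read the tangent dimension from (A)/(B) together with Theorem~\ref{thm:412}(4), and then pin down the length from (C). The only cosmetic difference is that you phrase the argument via the complete local ring $R\cong\CC[[t]]/(t^N)$ while the paper speaks of an \'etale-local closed subscheme of $\Spec\CC[s]$ and argues the non-extension step by reparametrizing $\iota:S_{e_1-1}\to S_{e_1}$ to the standard inclusion; your concern in the last paragraph about the log structure on $S_n$ is not an actual gap, since morphisms of the underlying scheme $S_n$ into $\barM_\beta$ automatically carry the basic log structure as part of the universal family.
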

\begin{proof}
Since $f_{0, p}$ has no nontrivial automorphisms, 
$\barM_\beta$ is an algebraic space near $f_{0, p}$. 
If $e_1=1$, it is a reduced point by (A). 

If $e_1>1$ the tangent space is $1$-dimensional by (B). 
Thus the moduli space is \'etale locally a closed subspace of $\Spec\CC[s]$. 
If it contains $S_{e_1}$, 
our family $f_{e_1-1, p}$ induces a map $\iota: S_{e_1-1}\to S_{e_1}$ 
such that $f_{e_1-1, p}$ is isomorphic to the pullback by $\iota$ of a 
basic stable log map over $S_{e_1}$, 
and its tangent map is nonzero since $m(s)=s\not\equiv 0 \mod s^2$. 
By composing with an automorphism of $S_{e_1}$, 
we may assume that $\iota$ is the standard closed immersion, 
and we would have an extension of $f_{e_1-1, p}$ to $S_{e_1}$. 
\end{proof}

So, for $1\leq n\leq e_1$, 
we consider the family $f_{n-1, p}$ and 
study its extensions to $S_n$. 
We write 
\[
v:=m^{(1)} 
\]
and we have 
\begin{equation*}
m(s) = 
\begin{cases} 
vs & \text{if $n=1$}, \\
vs + m^{(n)}s^n \quad\text{and $v=1$} & \text{if $n\geq 2$}, 
\end{cases}
\end{equation*}
\begin{eqnarray*}
u_1(s) & = & 1 + u_1^{(n)}s^n, \\
u_2(s) & = & c_p + u_2^{(n)}s^n, \\
\tilde{A}_i & = & A_i(z_{i1})+A_i(0)\{(1-z_{i2})^{d_1+d_2}-1\} \\ 
& & 
+ s^n(A_{i0}^{(n)}+z_{i1}A_{i1}^{(n)}(z_{i1})+z_{i2}A_{i2}^{(n)}(z_{i2})), \\
\tilde{B}_i & = & B_i(z_{i1}) + 
s^n(B_{i0}^{(n)}+z_{i1}B_{i1}^{(n)}(z_{i1})+z_{i2}B_{i2}^{(n)}(z_{i2})) 
\end{eqnarray*}
with $s^{n+1}=0$, 
and see when the conditions of Lemma \ref{lem_conditions} 
are satisfied. 

We have 
\begin{eqnarray*}
z_{11} z_{12} & = & m(s)^{e_2}u_1(s) \\
& = & v^{e_2}s^{e_2}, \\ 
z_{21}z_{22} & = & m(s)^{e_1}u_2(s) \\
& = & c_p v^{e_1}s^{e_1}. 
\end{eqnarray*}

\begin{Lemma} \label{lem_b}
Condition (b) of Lemma \ref{lem_conditions} holds if and only if the following hold: 
\[
B_{10}^{(n)}= B_{20}^{(n)} 
\]
and 
\[
B_{12}^{(n)}(z_{12}) \equiv B_{12}^{(n)}(0) 
\text{ and } B_{22}^{(n)}(z_{22})\equiv B_{22}^{(n)}(0)
\]
with 
\begin{eqnarray*}
B_{12}^{(n)}(0)  & = & 
\begin{cases}
B'_2(0)c_p v^n & \text{if $n=e_1$}, \\
0 & \text{otherwise}, 
\end{cases} \\
B_{22}^{(n)}(0)  & = &
\begin{cases}
B'_1(0) v^n & \text{if $n=e_2$, i.e., $n=e_1=e_2=1$}, \\
0 & \text{otherwise}. 
\end{cases}
\end{eqnarray*}
\end{Lemma}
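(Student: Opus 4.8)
The plan is to restrict condition (b) of Lemma \ref{lem_conditions} to the overlap $\cV_0$, expand everything in the coordinate $z_{12}$ (with $z_{22}=z_{12}^{-1}$) and in powers of $s$, and read off the constraints it imposes at order $s^n$. Throughout I work modulo $s^{n+1}$ and use the relations $z_{11}z_{12}=v^{e_2}s^{e_2}$ and $z_{21}z_{22}=c_pv^{e_1}s^{e_1}$ established just above, so that on $\cV_0$ one has $z_{11}=v^{e_2}s^{e_2}z_{12}^{-1}$ and $z_{21}=c_pv^{e_1}s^{e_1}z_{12}$. Since $f_{n-1,p}$ is already a stable log map by Theorem \ref{thm:412}(2), condition (b) is automatic modulo $s^n$; in particular the order-$s^0$ matching is exactly $B_1(0)=B_2(0)$, which holds by Notation \ref{notation_central_map}. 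Hence it suffices to equate the coefficients of $s^n$ in $\tilde B_1|_{\cV_0}$ and $\tilde B_2|_{\cV_0}$ and to impose regularity of the common value at $x_1$, i.e.\ at $z_{12}=1$.

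Next I would isolate the order-$s^n$ terms. Because $z_{11}\propto s^{e_2}$ and $z_{21}\propto s^{e_1}$ with $e_1,e_2\ge n$, the Taylor expansions of $B_i(z_{i1})$ contribute at order $s^n$ only through their linear terms, and only in the boundary cases: the term $B_1'(0)v^n z_{12}^{-1}$ appears precisely when $e_2=n$, and $B_2'(0)c_p v^n z_{12}$ appears precisely when $e_1=n$. The corrections $s^nz_{i1}B_{i1}^{(n)}(z_{i1})$ carry an extra factor $z_{i1}$, hence $s^{e_2}$ respectively $s^{e_1}$, and therefore vanish modulo $s^{n+1}$. Writing $\sigma_1$ for $B_1'(0)v^nz_{12}^{-1}$ when $e_2=n$ and $0$ otherwise, and $\sigma_2$ for $B_2'(0)c_pv^nz_{12}$ when $e_1=n$ and $0$ otherwise, the order-$s^n$ part of condition (b) becomes the identity of rational functions of $z_{12}$ on $\mathbb{C}^\times\setminus\{1\}$
\begin{equation*}
\sigma_1+B_{10}^{(n)}+z_{12}B_{12}^{(n)}(z_{12})=\sigma_2+B_{20}^{(n)}+z_{12}^{-1}B_{22}^{(n)}(z_{12}^{-1}),
\end{equation*}
together with the requirement that either side extend regularly across $z_{12}=1$.

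Finally I would solve this identity by analyzing the pole loci $z_{12}=0,1,\infty$ separately. A priori $B_{12}^{(n)}\in\CC[z_{12},(z_{12}-1)^{-1}]$ and $B_{22}^{(n)}\in\CC[z_{22},(z_{22}-1)^{-1}]$, so each may have a pole at $x_1$; since $\sigma_1,\sigma_2,B_{10}^{(n)},B_{20}^{(n)}$ are regular at $z_{12}=1$, regularity of the common value there forces both $B_{12}^{(n)}$ and $B_{22}^{(n)}$ to be polynomials. Comparing strictly positive powers of $z_{12}$ then forces $B_{12}^{(n)}$ to be the constant equal to the coefficient of $z_{12}$ in $\sigma_2$ (namely $B_2'(0)c_pv^n$ when $n=e_1$, else $0$); comparing strictly negative powers forces $B_{22}^{(n)}$ to be the constant equal to the coefficient of $z_{12}^{-1}$ in $\sigma_1$ (namely $B_1'(0)v^n$ when $n=e_2$, which by coprimality means $e_1=e_2=1$, else $0$); and the constant terms give $B_{10}^{(n)}=B_{20}^{(n)}$. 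Since each of these steps is reversible, the displayed conditions are also sufficient, which yields the stated equivalence.

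I expect the main obstacle to be the truncation bookkeeping rather than any conceptual difficulty: one must track carefully which contributions survive modulo $s^{n+1}$ — in particular that $B_i(z_{i1})$ produces a source term only in the boundary cases $e_2=n$ or $e_1=n$, and that all the $z_{i1}B_{i1}^{(n)}$ corrections drop out — and then keep the three pole loci $z_{12}=0,1,\infty$ cleanly separated, so that regularity at $x_1$ together with the matching of Laurent coefficients reproduces exactly the closed forms in the statement and introduces no spurious extra constraints.
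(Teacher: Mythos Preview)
Your proposal is correct and follows essentially the same approach as the paper's proof: restrict condition (b) to $\cV_0$, write out the resulting identity in $z_{12}$ (tracking which terms in $B_i(z_{i1})$ survive modulo $s^{n+1}$), use regularity at $z_{12}=1$ to force $B_{i2}^{(n)}$ to be polynomials, and then compare Laurent coefficients. Your write-up is more explicit about the truncation bookkeeping and the separation of pole loci than the paper's, but the argument is the same.
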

\begin{proof}
Lemma \ref{lem_conditions} (b) says that 
\begin{eqnarray*}
& & B_1(0) + 
s^n(B_{10}^{(n)}+z_{12}B_{12}^{(n)}(z_{12})) + s^{e_2}B'_1(0)v^{e_2}z_{12}^{-1} \\
& = &
B_2(0) + 
s^n(B_{20}^{(n)}+z_{12}^{-1}B_{22}^{(n)}(z_{12}^{-1})) 
+ s^{e_1}B'_2(0)c_pv^{e_1}z_{12}, 
\end{eqnarray*}
and that both sides are extendable to $z_{12}=1$. 

From the regularity at $z_{12}=1$, 
we see that $B_{i2}^{(n)}(z_{i2})$ are polynomials, 
and comparing the coefficients, we obtain the assertion. 

For the other implication, note that $B_1(0)=B_2(0)$ is assumed. 
\end{proof}

We have a natural trivialization 
$\cC_i^\circ\cong C_i^\circ\times S\cong (\mathbb{P}^1\setminus\{0\})\times S$ 
given by the functions $z_{i0}$ and $z_{i1}$. 
Then the morphism $\cC_i^\circ\cap \cV\to X$ 
by data $\tilde{A}_i$ and $\tilde{B}_{ji}$ 
can be considered as a deformation of $f^{(i)}|_{C_i^\circ\cap V}$. 
Since it is trivial modulo $s^n$ and $(s^n)^2=0$, 
we obtain a vector field $\bv_i\in\Gamma(C_i^\circ\cap V, (f^{(i)})^*T_X)$ 
to $X$ along $f^{(i)}|_{C_i^\circ\cap V}$. 
Specifically, if $\varphi$ is a local regular function on $X$ 
we can write 
$(\underline{f}|_{\cC_i^\circ})^*\varphi = (f^{(i)})^*\varphi+s^n D(\varphi)$. 
Then it is easy to check that $D$ defines a derivation 
$(f^{(i)}|_{C_i^\circ\cap V})^{-1}\cOX\to \cO_{C_i^\circ\cap V}$, 
hence a section $\bv_i\in\Gamma(C_i^\circ\cap V, (f^{(i)})^*T_X)$. 

By the same reasoning, 
if the condition (c) of Lemma \ref{lem_conditions} holds, 
then $\bv_i$ extends to a section in $\Gamma(C_i^\circ, (f^{(i)})^*T_X)$. 

Let us explicitly write down $\bv_i$. 
On $\cC_1^\circ\cap \cV$, 
from $s^{n+1}=0$, $z_{12}=z_{11}^{-1}v^{e_2}s^{e_2}$ and $e_2\geq n$ 
we have
\[
z_{11}^{d_1}\tilde{A}_1 = 
z_{11}^{d_1}
A_1(z_{11}) 
+ s^n z_{11}^{d_1}(A_{10}^{(n)}+z_{11}A_{11}^{(n)}(z_{11})) 
+ s^{e_2} z_{11}^{d_1-1}(-(d_1+d_2)A_1(0))v^{e_2}
\]
and 
\[
\tilde{B}_1 = 
B_1(z_{11}) + s^n (B_{10}^{(n)} + z_{11}B_{11}^{(n)}(z_{11})). 
\]

Thus we have
\[
\bv_1=z_{11}^{d_1}(A_{10}^{(n)}+z_{11}A_{11}^{(n)}(z_{11}))\partial_{w_1}
+ 
(B_{10}^{(n)} + z_{11}B_{11}^{(n)}(z_{11}))\partial_{w_2}
\]
if $n<e_2$ and
\[
\bv_1=z_{11}^{d_1}(z_{11}^{-1}(-(d_1+d_2)A_1(0))v^n+A_{10}^{(n)}+z_{11}A_{11}^{(n)}(z_{11}))\partial_{w_1}
+ 
(B_{10}^{(n)} + z_{11}B_{11}^{(n)}(z_{11}))\partial_{w_2}
\]
if $n=e_2$ (i.e. $n=e_1=e_2=1$). 

Similarly we have 
\[
\bv_2=z_{21}^{d_2}(A_{20}^{(n)}+z_{21}A_{21}^{(n)}(z_{21}))\partial_{w_1}
+ 
(B_{20}^{(n)} + z_{21}B_{21}^{(n)}(z_{21}))\partial_{w_2} 
\]
if $n<e_1$ and 
\[
\bv_2=z_{21}^{d_2}(z_{21}^{-1}(-(d_1+d_2)A_2(0))c_p v^n+A_{20}^{(n)}+z_{21}A_{21}^{(n)}(z_{21}))\partial_{w_1}
+ 
(B_{20}^{(n)} + z_{21}B_{21}^{(n)}(z_{21}))\partial_{w_2} 
\]
if $n=e_1$. 

Recall that we are making the same assumptions as in Theorem \ref{theorem_main}. 
\begin{Lemma}
Let $\cE_i$ be the $\cO_{C_i}$-submodule of $(f^{(i)})^*T_X$ 
generated by $(f^{(i)})^*T_X(-\log D)$ and $T_{C_i}$. 
\begin{enumerate}
\item
The sheaf $\cE_i$ can also be described as the $\cO_{C_i}$-submodule of $(f^{(i)})^*T_X$
generated by $(f^{(i)})^*T_X(-\log D)$ and $z_{i1}^{d_i-1}\partial_{w_1}$ at $R_i$. 
\item
If the condition (c) of Lemma \ref{lem_conditions} holds, then  
$\bv_i$ extends to a global section of $\cE_i$. 
\item
There is a commutative diagram with exact rows and columns as follows. 
\[
\xymatrix{
 & 0 \ar[d] & 0 \ar[d] & & \\
0 \ar[r] & T_{C_i}(-\log R_i) \ar[r]\ar[d] & 
(f^{(i)})^*T_X(-\log D)\ar[r]\ar[d] & \cO_{C_i}(-1) \ar[r]\ar@{=}[d] & 0 \\
0 \ar[r] & T_{C_i} \ar[r]\ar[d] & \cE_i \ar[r]\ar[d] & \cO_{C_i}(-1) \ar[r] & 0 \\
 & \CC \ar@{=}[r]\ar[d] & \CC\ar[d] & & \\
 & 0 & 0 & & \\
}
\]
\item
The natural map 
$\Gamma(T_{C_i})\to\Gamma(\cE_i)$ is an isomorphism. 
\end{enumerate}
\end{Lemma}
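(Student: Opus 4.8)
The plan is to reduce every assertion to an explicit local model of $f^{(i)}$ near $R_i$, where $\cE_i$ differs from $(f^{(i)})^*T_X(-\log D)$, and then to read off the two exact columns and run the snake lemma. First I would set $t=z_{i1}$ as a local parameter on $C_i$ vanishing at $R_i$, so that by Notation \ref{notation_central_map} the normalization is $w_1\mapsto t^{d_i}A_i(t)$, $w_2\mapsto B_i(t)$ with $A_i(0)\neq 0$. Since $T_X(-\log D)$ is generated near $P$ by $w_1\partial_{w_1}$ and $\partial_{w_2}$, its pullback is the free module $\cO_{C_i}\langle t^{d_i}\partial_{w_1},\partial_{w_2}\rangle$ inside $(f^{(i)})^*T_X=\cO_{C_i}\langle\partial_{w_1},\partial_{w_2}\rangle$, while $df^{(i)}(\partial_t)=t^{d_i-1}(d_iA_i+tA_i')\partial_{w_1}+B_i'\partial_{w_2}$. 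As $d_iA_i(0)\neq 0$, reducing this generator of the image of $T_{C_i}$ modulo $\partial_{w_2}$ yields $t^{d_i-1}\partial_{w_1}$ up to a unit; this proves (1). Away from $R_i$ the map avoids $D$, so there $(f^{(i)})^*T_X(-\log D)=(f^{(i)})^*T_X=\cE_i$, and every comparison below is concentrated at $R_i$.

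For (2) I would invoke the explicit formulas for $\bv_i$ displayed just before the lemma: in each case the coefficient of $\partial_{w_1}$ is divisible by $z_{i1}^{d_i-1}$ and the coefficient of $\partial_{w_2}$ is regular at $R_i$, so by the local description in (1) the field $\bv_i$ extends across $R_i$ as a section of $\cE_i$. Condition (c) of Lemma \ref{lem_conditions} is precisely what lets $\bv_i$ extend over the remainder of $C_i^\circ$, where $\cE_i=(f^{(i)})^*T_X$; patching gives a global section of $\cE_i$.

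For (3) I would assemble the diagram piecewise. The left column $0\to T_{C_i}(-\log R_i)\to T_{C_i}\to\CC\to 0$ is the standard residue sequence. The middle column $0\to(f^{(i)})^*T_X(-\log D)\to\cE_i\to\CC\to 0$ comes from the local model, the quotient being $\cO_{C_i}\langle t^{d_i-1}\partial_{w_1}\rangle/\cO_{C_i}\langle t^{d_i}\partial_{w_1}\rangle\cong\cO_{C_i}/(t)$, a length-one skyscraper at $R_i$. For the top row, $df^{(i)}$ carries $T_{C_i}(-\log R_i)$ into $(f^{(i)})^*T_X(-\log D)$ (check on $t\partial_t$), it is injective by immersivity, and it is a subbundle inclusion since its value at $R_i$ is $d_iA_i(0)\,(t^{d_i}\partial_{w_1})|_{R_i}\neq 0$; hence the cokernel is a line bundle of degree $(-K_X-D).\beta_i-1=-1$, i.e.\ $\cO_{C_i}(-1)$. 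I would then apply the snake lemma to the two horizontal sequences with these vertical inclusions. The one point needing care, and the main obstacle, is to verify that the induced map between the two skyscraper cokernels $\CC\to\CC$ is an isomorphism: tracing $\partial_t|_{R_i}$ through $T_{C_i}\to\cE_i\to\cE_i/(f^{(i)})^*T_X(-\log D)$ gives $d_iA_i(0)$ times the generator, nonzero precisely because $A_i$ is a unit (this is where maximal tangency, i.e.\ the $t^{d_i}$ versus $t^{d_i-1}$ gap, enters). This forces the induced map of the top cokernel $\cO_{C_i}(-1)$ onto the cokernel of the second row to be an isomorphism, identifying the latter with $\cO_{C_i}(-1)$ compatibly with the right column.

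Part (4) is then immediate: taking global sections of the second row over $C_i\cong\PP^1$ and using $\Gamma(\cO_{C_i}(-1))=0$ shows $\Gamma(T_{C_i})\to\Gamma(\cE_i)$ is injective with vanishing cokernel, hence an isomorphism. Conceptually this records that any $\bv_i\in\Gamma(\cE_i)$ is tangent to $C_i$, which is what will later bound the log deformations.
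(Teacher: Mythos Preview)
Your proof is correct and follows essentially the same approach as the paper's. The paper is terse---it writes ``Direct calculations'' for (1), cites the explicit formulas for (2), and for (3) just records that immersivity of the log map gives an injection with invertible cokernel of degree $(-K_X-D).\beta_i-\deg T_{C_i}(-\log R_i)=-1$, leaving the rest as ``easy to prove''---whereas you spell out the local model, the skyscraper quotient, and the snake-lemma check that the induced map $\CC\to\CC$ is multiplication by $d_iA_i(0)\neq 0$. One small remark: the paper notes parenthetically that immersivity at $R_i$ is not needed for the top row of (3), since the log differential of $t\partial_t$ already has nonzero $w_1\partial_{w_1}$-component $d_i$ regardless; your computation of the value $d_iA_i(0)(t^{d_i}\partial_{w_1})|_{R_i}$ confirms this as well.
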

\begin{proof}
(1) 
Direct calculations. 

(2)
As explained right after the definition of $\bv_i$, 
if we assume Lemma \ref{lem_conditions} (c), 
then $\bv_i$ extends to a section of $(f^{(i)})^*T_X$ over $C_i^\circ$. 
Note that $(f^{(i)})^*T_X|_{C_i^\circ}=\cE_i|_{C_i^\circ}$, 
as can be seen from 
$(f^{(i)})^*T_X(-\log D)\subseteq \cE_i\subseteq (f^{(i)})^*T_X$. 

We see that $\bv_i$ also belongs to $(\cE_i)_{R_i}$ 
from our explicit description of $\bv_i$ and (1), 
and we have the assertion. 

(3)
The first homomorphism in the first row 
is injective with an invertible cokernel 
since $f^{(i)}$ is a log map which is immersive 
(although the immersivity at $R_i$ is not needed \emph{here}). 
From $(K_X+D).Z_i=0$ we see that the cokernel is $\cO_{C_i}(-1)$. 
Then the assertion is easy to prove. 

(4) follows from (3). 
\end{proof}

Thus we have global vector fields 
$a_1(z_{11})\partial_{z_{11}}$ and $a_2(z_{21})\partial_{z_{21}}$ 
on $C_1$ and $C_2$ respectively 
such that 
\[
\bv_1=a_1(z_{11})(f^{(1)})_*\partial_{z_{11}}, 
\bv_2=a_2(z_{21})(f^{(2)})_*\partial_{z_{21}}. 
\]
Comparing at $R_i$, we have 
\begin{equation}\label{eq_a_1}
a_1(0) = 
\begin{cases} 
0 & \text{if $n<e_2$},  \\
-\frac{d_1+d_2}{d_1}v^n & \text{if $n=e_2$ (i.e. $n=e_1=e_2=1$)},  
\end{cases}
\end{equation}
\begin{equation}\label{eq_a_2}
a_2(0) = 
\begin{cases} 
0 & \text{if $n<e_1$}, \\
-\frac{d_1+d_2}{d_2}c_pv^n & \text{if $n=e_1$},  
\end{cases}
\end{equation}
\begin{eqnarray*}
a_1(0)B'_1(0) & = & B_{10}^{(n)}, \\
a_2(0)B'_2(0) & = & B_{20}^{(n)}. 
\end{eqnarray*}
We also have $B_{10}^{(n)}=B_{20}^{(n)}$ by Lemma \ref{lem_b}. 

\begin{Lemma}\label{416}
If $n=e_1$, then $v=0$. 
\end{Lemma}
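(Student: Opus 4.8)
The plan is to combine the three numerical relations just obtained — namely $a_i(0)\,B_i'(0)=B_{i0}^{(n)}$ for $i=1,2$ together with $B_{10}^{(n)}=B_{20}^{(n)}$ from Lemma~\ref{lem_b} — into the single identity
\[
a_1(0)\,B_1'(0)=a_2(0)\,B_2'(0),
\]
and then to substitute the explicit values of $a_1(0)$ and $a_2(0)$ from \eqref{eq_a_1} and \eqref{eq_a_2}. Since we have normalized $d_1\le d_2$, i.e.\ $e_1\le e_2$, and $\gcd(e_1,e_2)=1$, the hypothesis $n=e_1$ leaves exactly two cases: $n=e_1<e_2$, and $n=e_1=e_2=1$ (the latter forced by coprimality). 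I would treat them separately.

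In the case $n=e_1<e_2$, \eqref{eq_a_1} gives $a_1(0)=0$, so the displayed identity forces $a_2(0)\,B_2'(0)=0$; as $B_2'(0)\ne 0$ by \eqref{ineq_b_dash} this yields $a_2(0)=0$. But \eqref{eq_a_2} reads $a_2(0)=-\frac{d_1+d_2}{d_2}\,c_p v^n$ with $c_p\ne 0$, whence $v^n=0$ in $\CC$ and therefore $v=0$. This case is immediate and uses no genericity.

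The remaining case $n=e_1=e_2=1$, where $d_1=d_2=d$, is the crux. Here $a_1(0)=-2v$ and $a_2(0)=-2c_p v$, so the identity becomes $v\,(B_1'(0)-c_p B_2'(0))=0$ and the whole statement reduces to proving $B_1'(0)\ne c_p B_2'(0)$. This is where the genericity hypothesis $(Z_1.Z_2)_P=\min\{d_1,d_2\}$ of Theorem~\ref{theorem_main} must enter, and translating it into an inequality among the Taylor coefficients $A_i(0),B_i'(0)$ is the main obstacle. The plan is to write $Z_i$ as a graph over $D$ by inverting $w_2=B_i(z_{i1})$ (legitimate since $B_i'(0)\ne0$), obtaining $w_1=\frac{A_i(0)}{B_i'(0)^{d}}(w_2-B_i(0))^{d}+\cdots$; by the Remark following Theorem~\ref{theorem_main}, the condition $(Z_1.Z_2)_P=d$ says precisely that these leading coefficients differ, i.e.\ $\frac{A_1(0)}{B_1'(0)^{d}}\ne\frac{A_2(0)}{B_2'(0)^{d}}$. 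Since $c_p^{d}=(-1)^{2d}A_1(0)/A_2(0)=A_1(0)/A_2(0)$ by the definition of $c_p$, the equality $B_1'(0)=c_p B_2'(0)$ would give $\frac{A_1(0)}{B_1'(0)^{d}}=\frac{A_2(0)}{B_2'(0)^{d}}$, contradicting genericity; hence $B_1'(0)\ne c_p B_2'(0)$ and $v=0$, completing both cases.
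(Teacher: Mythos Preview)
Your proof is correct and follows essentially the same approach as the paper: split into the two cases $n=e_1<e_2$ and $n=e_1=e_2=1$, use the identity $a_1(0)B_1'(0)=a_2(0)B_2'(0)$, and in the equal case reduce to the inequality $A_1(0)(B_2'(0))^d\ne A_2(0)(B_1'(0))^d$ coming from $(Z_1.Z_2)_P=d$ together with $c_p^d=A_1(0)/A_2(0)$. Your write-up is in fact slightly more explicit than the paper's in justifying why the genericity hypothesis translates into that coefficient inequality (via the graph parametrization $w_1=\frac{A_i(0)}{B_i'(0)^d}(w_2-B_i(0))^d+\cdots$).
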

\begin{proof}
If $n=e_1=e_2$, then they are all $1$. 
By $(Z_1.Z_2)_P=d$ we have $A_1(0)(B'_2(0))^d\not=A_2(0)(B'_1(0))^d$. 
Also, $c_p^d=A_1(0)/A_2(0)$ by definition. 
From these we obtain $v=0$. 

If $n=e_1<e_2$, then we have 
$a_1(0)=0$, and then $v=0$ 
as we have $B'_2(0)\not=0$ (since $d_2\geq 2$ and $C_2$ is smooth at the point of intersection $P$). 
\end{proof}

\begin{proof}[Proof of Lemma \ref{413} (C)]
We are considering the case $n=e_1\geq 2$, 
so $v=m^{(1)}=1$ by our setup. 
But we also have $v=0$ by Lemma \ref{416}, which yields a contradiction.
\end{proof}

\begin{Lemma}\label{417}
The deformation of the underlying prestable curve is trivial, 
$B_{12}^{(n)}(z_{12})=B_{22}^{(n)}(z_{22})=0$, 
and $a_1(0)=a_2(0)=0$. 
\end{Lemma}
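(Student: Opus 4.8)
The plan is to derive all three assertions by reading them off the explicit data already assembled, splitting according to whether $n<e_1$ or $n=e_1$; the only input beyond bookkeeping will be Lemma \ref{416}.

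First I would treat the generic range $1\le n<e_1$. Since we have arranged $d_1\le d_2$, i.e.\ $e_1\le e_2$, the inequality $n<e_1$ also forces $n<e_2$. I would then invoke Equations \eqref{eq_a_1} and \eqref{eq_a_2}, whose ``$n<e_i$'' branches give $a_1(0)=0$ and $a_2(0)=0$ immediately. Lemma \ref{lem_b} applies with $n\ne e_1$ and $n\ne e_2$, so its constants $B_{12}^{(n)}(0)$ and $B_{22}^{(n)}(0)$ both vanish; since that lemma also records $B_{i2}^{(n)}(z_{i2})\equiv B_{i2}^{(n)}(0)$, this gives $B_{12}^{(n)}=B_{22}^{(n)}=0$. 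For the curve, I would use that the node-smoothing parameters of $\cC_{n,m(s),(e_2,u_1(s)),(e_1,u_2(s))}$ are $z_{11}z_{12}=v^{e_2}s^{e_2}$ and $z_{21}z_{22}=c_pv^{e_1}s^{e_1}$, both of which reduce to $0$ modulo $s^{n+1}$ because $e_1,e_2>n$; hence the underlying pre-stable curve is the trivial family.

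The remaining case is the boundary value $n=e_1$, which is precisely where Lemma \ref{416} does the work, supplying $v=0$. With $v=0$ every expression above collapses: the ``$n=e_1$'' branch of \eqref{eq_a_2} reads $-\frac{d_1+d_2}{d_2}c_pv^n=0$, while in \eqref{eq_a_1} either $n<e_2$ (so $a_1(0)=0$) or $n=e_2=e_1=1$ (so $a_1(0)=-\frac{d_1+d_2}{d_1}v^n=0$); likewise Lemma \ref{lem_b} now yields $B_{12}^{(n)}(0)=B_2'(0)c_pv^n=0$ and $B_{22}^{(n)}(0)=0$, hence $B_{12}^{(n)}=B_{22}^{(n)}=0$. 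Both node parameters vanish too, $z_{21}z_{22}=c_pv^{e_1}s^{e_1}$ now because $v=0$, so the curve is again trivial.

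I expect the only point needing care to be the degenerate sub-case $n=e_1=e_2=1$, where a priori both $a_1(0)$ and the node parameter $z_{11}z_{12}=v\,s$ could be nonzero; the resolution is that Lemma \ref{416}, through the genericity hypothesis $(Z_1.Z_2)_P=\min\{d_1,d_2\}$, forces $v=0$ and rules this out. Beyond isolating that case, the argument is a direct substitution into \eqref{eq_a_1}, \eqref{eq_a_2} and Lemma \ref{lem_b}, so I anticipate no serious obstacle.
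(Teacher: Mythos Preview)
Your proof is correct and follows essentially the same approach as the paper: a case split on $n<e_1$ versus $n=e_1$, invoking Lemma \ref{416} in the latter case to obtain $v=0$, then reading off $a_i(0)=0$, $B_{i2}^{(n)}=0$, and the vanishing of the node parameters from \eqref{eq_a_1}, \eqref{eq_a_2}, and Lemma \ref{lem_b}. Your treatment is simply more explicit about the sub-case $n=e_1=e_2=1$, which the paper folds into the phrase ``in any case.''
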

\begin{proof}
If $n=e_1$, then $v=0$ by Lemma \ref{416}, and 
from equations \eqref{eq_a_1} and \eqref{eq_a_2} 
we have $a_1(0)=a_2(0)=0$ in any case. 
Similarly, from Lemma \ref{lem_b} it follows that 
$B_{12}^{(n)}(z_{12})=B_{22}^{(n)}(z_{22})=0$. 

The deformation parameters of the underlying curve are 
\[
m(s)^{e_2}u_1(s)=v^{e_2}s^{e_2}=0 \text{ and }
m(s)^{e_1}u_2(s)=c_p v^{e_1}s^{e_1}=0, 
\]
for $v=0$ if $n=e_1$ and $s^{e_1}=s^{e_2}=0$ if $n<e_1$. 
So the deformation of the curve is trivial. 
\end{proof}

Since $a_i(0)=0$, 
$a_1(z_{11})\partial_{z_{11}}$ and $a_2(z_{21})\partial_{z_{21}}$ 
define an automorphism of $\underline{\cC}_{S_n}$ over $S_n$, 
and by untwisting 
we may assume that the underlying stable map is 
a trivial deformation, i.e. $\bv_1=\bv_2=0$, 
or more explicitly, 
\[
A_{10}^{(n)}=A_{11}^{(n)}(z_{11}) = 
B_{10}^{(n)} = B_{11}^{(n)}(z_{11}) = 
A_{20}^{(n)}=A_{21}^{(n)}(z_{21}) = 
B_{20}^{(n)} = B_{21}^{(n)}(z_{21}) = 0. 
\]

\begin{Lemma}\label{418}
Assuming $A_{10}^{(n)}=A_{20}^{(n)}=0$, 
condition (a) of Lemma \ref{lem_conditions} implies 
$A_{12}^{(n)}(z_{12})=A_{22}^{(n)}(z_{22})=0$ and 
$d_1u_1^{(n)} = d_2c_p^{-1}u_2^{(n)}$. 
\end{Lemma}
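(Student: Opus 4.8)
The plan is to test condition (a) of Lemma~\ref{lem_conditions} on the collapsed component $\cV_0$ and read off the coefficient of $s^n$. First I would restrict all data to $\cV_0$. Since Lemma~\ref{417} shows that the underlying curve deformation is trivial, both deformation parameters $\mu_i$ vanish over $S_n$, so $z_{11}=z_{21}=0$ on $\cV_0$; together with the hypothesis $A_{10}^{(n)}=A_{20}^{(n)}=0$ this collapses the expansions to $\tilde A_i|_{\cV_0}=A_i(0)(1-z_{i2})^{d_1+d_2}+s^n z_{i2}A_{i2}^{(n)}(z_{i2})$ (note that the $z_{i1}A_{i1}^{(n)}$ terms drop out automatically because $z_{i1}=0$, so only $A_{i0}^{(n)}=0$ is needed). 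I would then work in the single coordinate $z_{12}$, using $z_{22}=z_{12}^{-1}$.

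Second, I would exploit the two halves of condition (a) separately. The requirement that the common function $z_{12}^{-d_1}u_1(s)^{d_1}\tilde A_1=z_{22}^{-d_2}u_2(s)^{d_2}\tilde A_2$ extends across $\tx_1$ shows, applied to each expression, that $A_{12}^{(n)}(z_{12})$ and $A_{22}^{(n)}(z_{22})$ are genuine polynomials; the further requirement that it vanish to order $d_1+d_2$ at $\tx_1$ then forces $A_{12}^{(n)}$ to be divisible by $(1-z_{12})^{d_1+d_2}$ and $A_{22}^{(n)}$ by $(1-z_{22})^{d_1+d_2}$. I would record this as $A_{i2}^{(n)}=(1-z_{i2})^{d_1+d_2}g_i$ for polynomials $g_1(z_{12})$, $g_2(z_{22})$.

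Third, I would extract the $s^n$-coefficient of the identity itself. Expanding $u_1(s)^{d_1}=1+d_1u_1^{(n)}s^n$ and $u_2(s)^{d_2}=c_p^{d_2}+d_2c_p^{d_2-1}u_2^{(n)}s^n$ and using $c_p^{d_2}=(-1)^{d_1+d_2}A_1(0)/A_2(0)$ to match the $s^0$-terms (exactly as in the central-fibre computation), the $s^n$-level reduces, after multiplying through by $z_{12}^{d_1}$, to
\[
z_{12}A_{12}^{(n)}(z_{12})-c_p^{d_2}z_{12}^{d_1+d_2-1}A_{22}^{(n)}(z_{12}^{-1})=\lambda\,(1-z_{12})^{d_1+d_2},\qquad \lambda:=(d_2c_p^{-1}u_2^{(n)}-d_1u_1^{(n)})A_1(0).
\]
Substituting the factored forms, using $(1-z_{12}^{-1})^{d_1+d_2}=(-1)^{d_1+d_2}z_{12}^{-(d_1+d_2)}(1-z_{12})^{d_1+d_2}$, and cancelling the common factor $(1-z_{12})^{d_1+d_2}$ leaves the clean identity $z_{12}g_1(z_{12})-c_p^{d_2}(-1)^{d_1+d_2}z_{12}^{-1}g_2(z_{12}^{-1})=\lambda$.

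Finally, I would separate this by powers of $z_{12}$: the first term carries only strictly positive powers, the second only strictly negative powers, and $\lambda$ lives in degree $0$. Hence $g_1=g_2=0$ and $\lambda=0$, and since $A_1(0)\neq0$ (as $A_1$ is a unit) this yields $A_{12}^{(n)}=A_{22}^{(n)}=0$ together with $d_1u_1^{(n)}=d_2c_p^{-1}u_2^{(n)}$, which is the claim. The main obstacle I anticipate is not any single computation but the bookkeeping: correctly isolating the $s^n$-coefficient of the tangency equation on $\cV_0$ and recognizing that the vanishing-order condition alone is insufficient (it controls the order of a zero but not the degree of $A_{i2}^{(n)}$). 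The decisive step is to feed the vanishing-order divisibility back into the matching identity, after which the identity splits cleanly into positive, zero, and negative powers of $z_{12}$ and forces all three quantities to vanish at once.
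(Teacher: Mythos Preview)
Your proof is correct and uses the same ingredients as the paper's: regularity at $\tx_1$ to see that the $A_{i2}^{(n)}$ are polynomials, the vanishing-order condition at $\tx_1$, and the matching identity from condition (a). The only difference is the order in which you deploy them: the paper first uses the matching identity to bound $\deg\bigl(A_{i0}^{(n)}+z_{i2}A_{i2}^{(n)}(z_{i2})\bigr)\le d_1+d_2$, then the vanishing order to conclude this polynomial equals $A_{i0}^{(n)}(1-z_{i2})^{d_1+d_2}$, and only then invokes $A_{i0}^{(n)}=0$; you instead use $A_{i0}^{(n)}=0$ and the vanishing order first to extract the factor $(1-z_{i2})^{d_1+d_2}$, and then read off $g_i=0$ and $\lambda=0$ from the residual Laurent identity by separating positive, negative, and zero powers of $z_{12}$. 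Both routes are short and entirely equivalent; the paper's ordering avoids introducing the auxiliary $g_i$, while yours makes the final step a one-line degree argument.
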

\begin{proof}
Again noting that either $v=0$ or $s^{e_1}=s^{e_2}=0$, 
and also that $z_{11}=z_{21}=0$ on $\cV_0$, 
we have 
\begin{eqnarray*}
z_{12}^{-d_1} u_1(s)^{d_1}\tilde{A}_1 & = & 
A_1(0) z_{12}^{-d_1}(1-z_{12})^{d_1+d_2} \\
& & 
+ s^n z_{12}^{-d_1}(A_{10}^{(n)}+z_{12}A_{12}^{(n)}(z_{12}) 
+ d_1 A_1(0) u_1^{(n)}(1-z_{12})^{d_1+d_2}) 
\end{eqnarray*}
and
\begin{eqnarray*}
z_{22}^{-d_2} u_2(s)^{d_2}\tilde{A}_2 & = & 
A_2(0) c_p^{d_2} z_{22}^{-d_2}(1-z_{22})^{d_1+d_2} \\
& & 
+ s^n c_p^{d_2} z_{22}^{-d_2}(A_{20}^{(n)}+z_{22}A_{22}^{(n)}(z_{22}) 
+ d_2 A_2(0) c_p^{-1} u_2^{(n)}(1-z_{22})^{d_1+d_2}) 
\end{eqnarray*}
on $\cV_0\setminus\{x_1\}$. 

By the regularity condition at $z_{12}=1$ from Lemma \ref{lem_conditions} (a), 
$A_{i2}^{(n)}(z_{i2})$ are polynomials. 
The equality in Lemma \ref{lem_conditions} (a) is equivalent to 
\begin{eqnarray*}
& & A_{10}^{(n)}+z_{12}A_{12}^{(n)}(z_{12}) 
+ d_1 A_1(0) u_1^{(n)}(1-z_{12})^{d_1+d_2} \\
& = & 
c_p^{d_2} z_{12}^{d_1+d_2}(A_{20}^{(n)}+z_{12}^{-1}A_{22}^{(n)}(z_{12}^{-1}))
+ d_2 A_2(0) c_p^{d_2-1} u_2^{(n)}(z_{12}-1)^{d_1+d_2}, 
\end{eqnarray*}
and from this we see that $\deg (A_{10}^{(n)}+z_{12}A_{12}^{(n)}(z_{12}))\leq d_1+d_2$. 
Similarly, $\deg (A_{20}^{(n)}+z_{22}A_{22}^{(n)}(z_{22}))\leq d_1+d_2$. 
From the condition on the vanishing order at $\tx_1$, 
we see that 
$A_{10}^{(n)}+z_{12}A_{12}^{(n)}(z_{12})=A_{10}^{(n)}(1-z_{12})^{d_1+d_2}$, 
$A_{20}^{(n)}+z_{22}A_{22}^{(n)}(z_{22})=A_{20}^{(n)}(1-z_{22})^{d_1+d_2}$, 
with 
\[
A_{10}^{(n)} + d_1A_1(0)u_1^{(n)} = 
(-1)^{d_1+d_2}c_p^{d_2}(A_{20}^{(n)} + d_2c_p^{-1}A_2(0)u_2^{(n)}). 
\]
But $A_{10}^{(n)}=A_{20}^{(n)}=0$, so that
$A_{12}^{(n)}(z_{12}) = A_{22}^{(n)}(z_{22}) = 0$
and thus \\ $d_1A_1(0)u_1^{(n)} = (-1)^{d_1+d_2}c_p^{d_2} d_2c_p^{-1}A_2(0)u_2^{(n)}$, 
i.e.~$d_1u_1^{(n)} = d_2c_p^{-1}u_2^{(n)}$. 
\end{proof}

\begin{proof}[Proof of Lemma \ref{413} (A) and (B)]
By Lemma \ref{417}, the curve $\cC_{S_n}/S_n$ is trivial and we may assume that 
\begin{eqnarray*}
& A_{10}^{(n)} = A_{11}^{(n)}(z_{11}) = A_{12}^{(n)}(z_{12}) = 0, & \\
& B_{10}^{(n)} = B_{11}^{(n)}(z_{11}) = B_{12}^{(n)}(z_{12}) = 0, & \\
& A_{20}^{(n)} = A_{21}^{(n)}(z_{21}) = A_{22}^{(n)}(z_{22}) = 0, & \\
& B_{20}^{(n)} = B_{21}^{(n)}(z_{21}) = B_{22}^{(n)}(z_{22}) = 0, & 
\end{eqnarray*}
i.e. $\tilde{A}_i$ and $\tilde{B_i}$ are constant with respect to $s$. 

Let $\boldm' := \boldm(1+(u_1^{(n)}/e_2)s^n)$, 
then we have 
\[
\alpha(\boldm')=\alpha(\boldm)\left(1+\frac{u_1^{(n)}}{e_2}s^n\right)= \alpha(\boldm) 
\]
and 
\[
\boldm^{e_2} u_1(s) = (\boldm')^{e_2}. 
\]
So, if we replace $\boldm$ by $\boldm'$, 
we see that $m(s)$ does not change and that $u_1^{(n)} = 0$. 
By Lemma \ref{418}, $u_2^{(n)}=0$. 
Since $\tilde{A}_i$ and $\tilde{B}_i$ do not depend on $\boldm$, 
they do not change either. 

If $n=e_1=1$, we also have $v=0$, and the deformation is trivial. 

If $n=1<e_1$, this says that the only deformation parameter is $v$. 
\end{proof}

\section{Example: Comparison between stable log maps and relative stable maps}

\label{sec:exx}

\begin{Example}\label{ex1}

We illustrate by an example the difference in our setting between the moduli space of stable log maps and the relative maps of \cite{Lirel1,Lirel2}. For this example the divisor is normal crossing, so the more appropriate moduli space is the one of \cite{Ran19}, but since the calculations are local, it is not relevant.

Start with $\mathbb{P}^2$ with its toric boundary $D=L_1\cup L_2\cup L_3$. Choose $P\in L_1$ a smooth point of $D$. Choose some local coordinates $w_1,w_2$ around $P$ such that $L_1$ is given by $w_1=0$ and $P=(0,0)$. We consider $Z'_1$ and $Z'_2$ two general smooth conics tangent to $L_1$ at $P$. For example, consider the conics given near $P$ by
\begin{align*}
Z'_1 &: \quad \, \, \, (w_1-1)^2 + w_2^2 - 1=0, \\
Z'_2 &: \quad 2(w_1-2)^2 + w_2^2 - 8=0.
\end{align*}
In particular, $(Z'_1.Z'_2)_P=2$ and $Z'_1$ and $Z'_2$ meet in 2 other points. Also, we may assume that $Z'_1\cup Z'_2$ is disjoint from the torus-fixed points. Write $P_1,\dots,P_8$ for the distinct points of intersection of $(Z'_1\cup Z'_2) \setminus P$ with $L_2\cup L_3$.

Let $X$ be the blowup of $\mathbb{P}^2$ in $P_1,\dots,P_8$ with exceptional divisors $E_1,\dots,E_8$ and by abusing notation slightly we denote by $D$ the proper transform of $L_1\cup L_2\cup L_3$. Let $Z_1$ and $Z_2$ be the proper transforms of $Z'_1$ and $Z'_2$. Then $Z_1$ and $Z_2$ satisfy the hypotheses of Theorem \ref{theorem_main} for $(X,D)$ and $Z_1+Z_2$ is in the class $\beta=4H-\sum_{i=1}^8 E_i$, where $H$ is the pullback of the hyperplane class.

Consider the chain of $\mathbb{P}^1$s given by $C=C_1\cup_{R_1} C_0 \cup_{R_2} C_2$ with $R_i$ nodes with local equations $z_{11}z_{12}=0$ and $z_{21}z_{22}=0$, where $z_{i1}$ are local coordinates for $C_i$ ($i=1, 2$) and $z_{i2}$ are inhomogeneous coordinates on $C_0$ satisfying $z_{12}z_{22}=1$ on $C_0\setminus\{R_1,R_2\}$.  The nodes $R_1$ and $R_2$ are the points at infinity of the projective completion of the affine curve   $z_{12}z_{22}=1$.
Choosing parametrizations of $Z_1$ and $Z_2$, there is only one $1$-marked stable map $\underline{f} : C_1\cup C_0 \cup C_2\to X$ with $\underline{f}_*C=Z_1+Z_2$ which underlies a stable log map of maximal tangency. Locally near $P$ and for $f_i:=\underline{f}|_{C_i}$,
\begin{align*}
f_1(z_{11}) &= \left(\frac{2z_{11}^2}{z_{11}^2+1},\frac{2z_{11}}{z_{11}^2+1}\right), \\
f_2(z_{21}) &= \left(\frac{8z_{21}^2}{2z_{21}^2+1},\frac{8z_{21}}{2z_{21}^2+1}\right),
\end{align*}
and $C_0$ is mapped to $P$. 

By Theorem \ref{thm:412}, we have two nonisomorphic stable log maps over $\underline{f}$ each of which has multiplicity $1$ in $\barM_\beta$. In what follows, we illustrate the log structure assuming $n=0$. 

Pulling back along $f_i$ as in Notation \ref{notation_central_map},
\[
w_1 \, \mapsto \, z_{i1}^{2} A_i(z_{i1}), 
\]
so that we have $A_1(z_{11})=2/(z_{11}^2+1)$ and $A_2(z_{21})=8/(2z_{21}^2+1)$ with $A_1(0)/A_2(0)=1/4$.

The base is $S=\Spec\CC$ with log structure $\cM_S = \coprod_{a=0}^\infty \boldm^a\CC^\times$, 
with $\alpha(\boldm^a)=0^a$.
At each node of $C$, the log structure $\shM_{C,R_i}$ is 
$
\coprod_{a, b,c=0}^\infty
\bz_{i1}^a\bz_{i2}^b\boldm^c\cO_{C, R_i}^\times
$
subject to the relation $\bz_{i1}\bz_{i2}=\boldm u_i$ with $\alpha(\bz_{ij})=z_{ij}$. Up to isomorphism, we may set $u_1=1$.

Near $P$, let $\bw_1$ be the unique lift of $w_1$ to $\shM_X$. Recall that $x_1$ is the marked point. By Theorem \ref{thm:412}, near $R_i$ away from $x_1$, we have
\begin{align*}
f_1^\flat (\bw) &= \bz_{11}^2 \left( \frac{2}{z_{11}^2+1} +  2((1-z_{12})^{4}-1) \right), \\
f_2^\flat (\bw) &= \bz_{21}^2 \left( \frac{8}{2z_{21}^2+1} + 8((1-z_{22})^{4}-1)\right).
\end{align*}
By the generization maps to $C_0$, sending $\bz_{i1}$ to $\boldm u_i z_{i2}^{-1}$ respectively, they have to agree on $C_0\setminus \{R_1,R_2,x_1\}$. Thus, 
\[ 
\boldm^{2}z_{12}^{-2} \left( 2((1-z_{12})^{4}) \right) = \boldm^{2}u_2^2 z_{22}^{-1} \left( 8(1-z_{22})^{4} \right)
\]
as $z_{i1}=0$ on $C_0\setminus \{R_1,R_2,x_1\}$. Since $z_{12}z_{22}=1$, we conclude $u_2=\pm 1/2$. Therefore we have two stable log maps. 
\end{Example}

\begin{Example}\label{ex2}
Let us look at the relative side. 
Since we work locally near $Z_1\cup Z_2$, 
we may think of $D$ as smooth.  Before proceeding with the example, we quickly recall some of the notation and ideas of the theory of relative stable maps, referring the reader to \cite{Lirel1} for more detail.

In the theory of relative stable maps to $X$, the target space $X$ is allowed to deform in a specific family over $\bA^n$ to singular limits $X[n]$.  The central fiber $X[n]_0$ is a chain of $n+1$ smooth components obtained by gluing $X$ to $\bP^1$-bundles $\bP_1,\ldots,\bP_n$ over $D$.  The gluings are along $D\subset X$ and sections of the $\bP_j$.  There are $n$ singular divisors $D_1,\ldots, D_n$ in $X[n]$.  The divisor $D_1$ is the intersection of $X$ and $\bP_1$, while $D_k$ is the intersection of $\bP_{k-1}$ and $\bP_k$ for $k>1$.  We will describe $X[1]_0$ in more detail below.

If $f:C\to X[n]_0$ is a relative stable map, then no component of $C$ is mapped into any $D_k$.  Furthermore, there is the \emph{predeformability\/} condition: if $p\in f^{-1}(D_k)$, then $p$ is a node of $C$.  One component $C_i$ of $C$ containing $p$ satisfies $f(C_i)\subset \bP_{k-1}$, while another such component $C_j$ satisfies $f(C_j)\subset \bP_k$, with an obvious modification if $k=1$.  Then the predeformability requirement is that $\mathrm{mult}_p(f|_{C_i}^*(D_k))=\mathrm{mult}_p(f|_{C_j}^*(D_k))$.
For the predeformability of a family of maps, 
we require a little more 
(\cite[Definition 2.9, Lemma 2.4, Definition 2.3]{Lirel1}). 
In the case at hand, we explain the condition later. 

In the current case, 
it turns out to be sufficient to consider $X[1]_0$, 
the central fiber of the blow-up of $X\times \mathbb{A}^1$ along $D\times \{0\}$. 
This is also obtained by gluing $X$ 
and the $\mathbb{P}^1$-bundle $\mathbb{P}(N_{D/X}\oplus\mathcal{O}_D)$ over $D$. 
 
We can describe $X[1]_0$ explicitly as follows. 
Let $X_0^\prime=\mathbb{A}^2\setminus (L_2\cup L_3)$, where $L_2\cup L_3$ is the union of two distinct lines that will correspond to $L_2\cup L_3\subset\bP^2$. Consider $\mathbb{A}^3$ with coordinates $w_1, w_2, v_1$ 
and projection $p: \mathbb{A}^3\to \mathbb{A}^2$ given by $w_1, w_2$, 
let $X'=p^{-1}(X_0^\prime)\cap V(w_1v_1)$ 
and $U=(L_1\cap X_0^\prime)\times\mathbb{A}^1\subset \mathbb{A}^2$ 
with coordinates $w_2, u_1$. 
Then we may think of $X[1]_0$ 
as covered by open sets $X'$, $U$ and $X\setminus D$: 
The open sets $(v_1\not=0)\subset X'$ and $(u_1\not=0)\subset U$ 
are identified by $u_1v_1=1$, 
and $(w_1\not=0)\subset X'$ is identified with an open subset of $X\setminus D$ 
in a natural way. 

We have a projection map $\pi: X[1]_0\to X$ with $\pi|_{X'}=p|_{X'}$, 
which identifies $(v_1=0)\subset X'$ with $X_0^\prime$ 
and contracts $U$ to $D$. 
The divisor $(u_1=0)\subset U$ will be considered as the boundary divisor. 

Let $S=\mathrm{Spec}\ \mathbb{C}[s]/(s^2)$. 
We will define a nontrivial family of relative stable maps $\mathcal{C}_S\to X[1]_0$ 
whose central fiber, composed with $\pi$, gives $\underline{f}$ in the previous example. 
To do this, 
we take $\mathcal{C}_S$ to be the family of curves 
defined by $\mu_1=s$ and $\mu_2=s/4$ 
(hence $z_{11}z_{12}=s$ and $z_{21}z_{22}=s/4$) 
and give a stable map $\mathcal{C}_S\to X[1]_0$ over $S$ such that 
the following hold: 
\begin{itemize}
\item (pullback of the boundary)
The pullback of $(u_1=0)$ is $4\cdot \tilde{x}_1$. 
\item (predeformability)
Near $R_i$, one can write 
$w_1=z_{i1}^2\tilde{A}_i$, 
$v_1=z_{i2}^2\tilde{B}_i$ 
with $\tilde{A}_i, \tilde{B}_i$ invertible and $\tilde{A}_i\tilde{B}_i\in \mathbb{C}[s]/(s^2)$. 
\end{itemize}
Once such a family of relative stable maps is obtained, 
it is nontrivial since $\mathcal{C}_S$ is a nontrivial deformation. 

We set 
\begin{eqnarray*}
\tilde{A}_1 = \frac{2(1-z_{12})^4}{z_{11}^2+1}, \qquad
\tilde{B}_1  =  \frac{z_{11}^2+1}{(1-z_{12})^4}, \\
\tilde{A}_2  = \frac{8(1-z_{22})^4}{2z_{21}^2+1}, \qquad 
\tilde{B}_2  =  \frac{2z_{21}^2+1}{(1-z_{22})^4}, \\
w_2 =  \frac{2z_{11}+s(4z_{11}^2-4+2z_{12})}{z_{11}^2+1} \hbox{ near $R_1$}, \\
w_2  = \frac{8z_{21}+s(8z_{21}^2-4+2z_{22})}{2z_{21}^2+1} \hbox{ near $R_2$}. 
\end{eqnarray*}
These data give a stable map $\mathcal{C}_S\to X[1]_0$. 
In fact, 
let $\cV\subset \mathcal{C}_S$ be the open subscheme 
supported on $\underline{f}^{-1}(X_0^\prime)\setminus\{x_1\}$. 
We first see that the two sets of $w_1, v_1$ 
(defined from $\tilde{A}_i$ and $\tilde{B}_i$ as above) 
and $w_2$ coincide on $\mathcal{C}_0\setminus\{R_1, R_2, x_1\}$ 
and give a morphism $\cV\to\bA^3$: 
On $\mathcal{C}_0\setminus\{R_1, R_2, x_1\}$, 
$z_{11}^2\tilde{A}_1= z_{21}^2\tilde{A}_2 = 0$, 
$z_{12}^2\tilde{B}_1 = z_{12}^2/(1-z_{12})^4$ 
is equal to $z_{22}^2\tilde{B}_2=z_{22}^2/(1-z_{22})^4$, 
and the two expressions for $w_2$ are 
\[
\frac{2z_{11}+s(4z_{11}^2-4+2z_{12})}{z_{11}^2+1}  
= \frac{2sz_{12}^{-1}+s(4(sz_{12}^{-1})^2-4+2z_{12})}{(sz_{12}^{-1})^2+1}
= s(2 z_{12}^{-1}-4+2z_{12})
\]
and 
\[
\frac{8z_{21}+s(8z_{21}^2-4+2z_{22})}{2z_{21}^2+1}
= \frac{8(s/4)z_{22}^{-1}+s(8((s/4)z_{22}^{-1})^2-4+2z_{22})}{2((s/4)z_{22}^{-1})^2+1}
= s(2 z_{22}^{-1}-4+2z_{22}), 
\]
which are equal. 
Note that we have $w_2|_{C_0}=0$ by reduction modulo $s$, as it should be. 

Since $w_1v_1=z_{i1}^2z_{i2}^2\tilde{A}_i\tilde{B}_i=0$, 
we have a morphism $\tilde{f}': \cV\to X'$. 

On $\cV\cap (\mathcal{C}_1\setminus R_1)$, 
\[
(w_1, w_2)=
\left(\frac{2z_{11}^2-8sz_{11}}{z_{11}^2+1}, \frac{2z_{11}+4s(z_{11}^2-1)}{z_{11}^2+1}\right), 
\]
and this is 
the reparametrization of $f_1$ by $z_{11}\mapsto z_{11}-2s(z_{11}^2+1)$. 
Similarly, the restriction of $\tilde{f}'$ to $\cV\cap (\mathcal{C}_2\setminus R_2)$ 
is given by 
the reparametrization of $f_2$ by $z_{21}\mapsto z_{21}-(1/2)s(2z_{21}^2+1)$. 
This implies that $\tilde{f}'$ extends to a morphism 
$\mathcal{C}_S\setminus \{x_1\}\to X[1]_0$. 
(We may also verify that $w_1$ and $w_2$ satisfy the equation for $Z'_i$.) 

On $\mathcal{C}_0\setminus \{R_1, R_2\}$, 
we have $u_1=(v_1)^{-1}=(1-z_{12})^4/z_{12}^2$, 
so $\tilde{f}'$ extends to a morphism $\mathcal{C}_S\to X[1]_0$, 
and the condition on the pullback of the boundary is satisfied. 
Predeformability also holds since $\tilde{A}_1\tilde{B}_1=2$ and $\tilde{A}_2\tilde{B}_2=8$. 

It follows from the main result of \cite{Tak17} 
that the corresponding point of the moduli space of relative stable map
is actually isomorphic to $S=\mathrm{Spec}\ \mathbb{C}[s]/(s^2)$, 
in contrast to the $2$ reduced points in the moduli space 
of basic stable log maps. 

\end{Example}

\bibliographystyle{gtart}
\bibliography{biblio}

\end{document}